\numberwithin{equation}{section}
\def \ER{\mathcal{ER}}
\def \g{\mathcal{G}}
\def \b{\mathcal{B}}
\def \dc{\mathrm{DC}}
\renewcommand{\l}{\left}
\renewcommand{\r}{\right}
\def \const{\mathrm{const}}
\def \E{\mathcal{E}}
\def \M2{\mathrm{M}_2}
\def \R{\mathbb{R}}
\def \Seps{S_{\varepsilon}}
\def \Z{\mathbb{Z}}
\def \T{\mathbb{T}}
\def \A{\mathcal{A}}
\def \sl2r{\mathrm{SL}(2,\R)}
\newcommand{\beq}{\begin{equation}}
\newcommand{\eeq}{\end{equation}}
\newcommand{\one}{\mathbf{1}}
\def\ran{\operatorname{ran}}
\def\diam{\operatorname{diam}}
\def\im{\operatorname{Im}}
\newcommand{\eqdef}{\stackrel{\rm def}{=\kern-3.6pt=}}
\theoremstyle{plain}
\newtheorem{theorem}{\bf Theorem}[section]
\newtheorem{lemma}[theorem]{\bf Lemma}
\newtheorem{prop}[theorem]{\bf Proposition}
\newtheorem{cor}[theorem]{\bf Corollary}
\theoremstyle{definition}
\newtheorem{defi}[theorem]{\bf Definition}
\theoremstyle{remark}
\newtheorem{remark}[theorem]{\bf Remark}
\renewcommand{\le}{\leqslant}
\renewcommand{\ge}{\geqslant}
\newcommand{\dist}{\mathop{\mathrm{dist}}\nolimits}
\renewcommand{\qed}{\vrule height7pt width5pt depth0pt}
\title[Interacting quasiperiodic particles]{Anderson localization for two interacting quasiperiodic particles}
\author[J. Bourgain]{Jean Bourgain}
\address{School of Mathematics,
	Institute for Advanced Study,
	Princeton, NJ, 08540,
	United States of America}
\email{bourgain@math.ias.edu}
\author[I. Kachkovskiy]{Ilya Kachkovskiy}
\address{Department of Mathematics,
	Michigan State University,
	East Lansing, MI, 48824,
	United States of America}
\email{ikachkov@msu.edu}
\date{}
\begin{document}
\maketitle
\section{Introduction}
In this paper, we study Anderson localization for the following family of Schr\"odinger operators on $\ell^2(\mathbb Z^2)$:
\beq
\label{h_def}
H(\theta_1,\theta_2)=\Delta+\lambda(v(n_1\omega+\theta_1)+v(n_2\omega+\theta_2))+U(n_1,n_2).
\eeq
Here $\Delta$ is the discrete Laplacian, $v\in C^{\omega}(\T)$ is a real analytic function (identified with a $1$-periodic analytic function on $\mathbb R$), $\omega$ is an irrational number, and $\theta=(\theta_1,\theta_2)\in \T^2$ is the quasiperiodic phase. The case $U=0$ corresponds to a direct sum of two 1D quasiperiodic operators, which can be treated as a system of two non-interacting particles on $\mathbb Z$. The function $U$ in \eqref{h_def} is a (deterministic) interaction potential. A typical example of $U$ would be a finite-range interaction, that is, $U(n_1,n_2)=f(n_1-n_2)$, where $f$ is a function on $\Z$ supported on a finite set. In general, we consider general potentials $U$ of low complexity (see Section 2.1). In particular, they will always take finitely many values.

Our main motivation for studying the models \eqref{h_def} comes from the localization results for interacting particles in random environments, see \cite{AW,KL,CHU}. In one way or another, random analogues of operators \eqref{h_def} demonstrate Anderson localization at large disorder $\lambda$. Another phenomenon is the following decoupling result: suppose the single particle is localized for some $\lambda$. Then, for sufficiently small finite range interaction $U$, where the smallness depends on $\lambda$, the multi-particle system will also be localized. Unfortunately, despite some progress obtained in the present paper, a similar question in the quasiperiodic case remains largely open, and our current results are obtained using 2D methods.

The second source of motivation, which seems to be closer to applications, comes from the numerical work \cite{FS}, see also references therein. They authors considered the almost Mathieu case $v(\theta)=\cos (2\pi \theta)$ and conjectured that there are some regimes where the addition of the interaction potential can generate some delocalized states (FIKS, that is, freed by interaction kinetic states), based on numerical evidence. In fact, their work goes as far as proposing real-life experiments with cold atoms, that can possibly confirm this prediction. 

Assuming that $U$ has low complexity in the sense of Section 2.1, localization results of the present paper depend on whether or not the quasiperiodic potential $v$ has cosine-type symmetries. Generic analytic potentials $v$ have no symmetries, and in this case we are able to obtain a perturbative localization result for large $\lambda$, assuming that a small positive measure set set of frequencies is removed, see Theorem \ref{main_asym}. This includes the regime of strong interaction, as long as its strength is $O(|\lambda|)$, see \eqref{eq_u_notlarge}. One can consider this result as an evidence of the fact that, in the asymmetric case, interaction of low complexity cannot break down the localization (however, the initial regime may need stronger disorder than just for single-particle localization).

In the case of potentials with symmetries, such as $v(\theta)=\cos(2\pi \theta)$, we still obtain localization at large disorder, however, we have to remove a part of spectrum of size $o(|\lambda|)$ around the (finitely many) values of $U$ from consideration, see Theorem \ref{main_sym}. Thus, possible FIKS are restricted to neighborhoods of finitely many energies, which comprise a relatively small part of the spectrum of the operator \eqref{h_def}.

While we do not obtain any delocalization results in the present paper, the almost Mathieu version of \eqref{h_def} is studied in \cite{BJK} (currently in preparation), in the regime of strong Hubbard-type interaction: that is, 
$$
v(\theta)=2\cos(2\pi\theta),\quad U(n_1,n_2)=u\delta_{n_1,n_2}, \quad  \lambda\text{ is fixed},\quad u\gg \lambda.
$$
In this case, the operator \eqref{h_def} has some spectrum in the region $[u-4-4\lambda,u+4+4\lambda]$, which is separated from the ``bulk'' spectrum. One can show that, for fixed phase difference $\theta_1-\theta_2$ and large $\lambda$, the spectrum in the interval $[u-4-4\lambda,u+4+4\lambda]$ is purely point, for a full measure set of frequencies $\omega$. On the other hand, for fixed $u\gg\lambda$ and some subsequent choice of $\theta_1-\theta_2\approx \pi$ (depending on $u$), the operator \eqref{h_def} has some non-trivial absolutely continuous spectrum in that interval. Thus, in the symmetric case, delocalization can indeed happen in some of the regions excluded in Theorem \ref{main_sym} away from the spectrum of the non-interacting operator (the latter is essential for the analysis in \cite{BJK}). In the case where excluded energies belong to the non-interacting spectrum, the question of localization/delocalization remains largely open.

\section{Summary of main results}
We start from the description of the classes of potentials $v$ and $U$ that can be considered.
\subsection{Low complexity interaction potentials}
Let $T_{(n_1,n_2)}$ be the translation operator on $\ell^2(\Z^2)$:
$$
(T_{n_1,n_2}\psi)(m_1,m_2)=\psi(m_1-n_1,m_2-n_2).
$$
\begin{defi}
Let $U\colon \Z^2\to \R$. For each $(n_1,n_2)\in \Z^2$, consider the function $T_{n_1,n_2}U$ restricted to $[0,N-1]^2$. 
We say that $U$ has {\it low complexity} if the number of different functions among such restricted translations of $U$ admits a power bound in $N$:
\beq
\label{eq_lowcomp_def}
\#\l\{\one_{[0,N-1]^2}\cdot \l(T_{n_1,n_2}U\r): n_1,n_2\in \Z\r\}\le N^{C_{\mathrm{int}}},\quad  (n_1,n_2)\in \Z^2,\quad N\ge 2,
\eeq
for some constant $C_{\mathrm{int}}>0$.
\end{defi}
In other words, $U$ has low complexity if one can get at most $N^{C_{\mathrm{int}}}$ possible configurations by restricting $U$ to a box of size $N$. This condition implies that $U$ can only take finitely many values:
\beq
U(n_1,n_2)\in \{U_1,\ldots,U_{N_{\mathrm{int}}}\},
\eeq
where, say, $N_{\mathrm{int}}\le 4^{C_{\mathrm{int}}}$.
The class of possible $U$ contains all periodic potentials on $\Z^2$, all finite range translationally invariant interaction potentials, and, in fact, some interesting examples such as Fibonacci-type potentials (see \cite{DGY} and references therein). 

We will also assume that $U$ is not very large compared to the disorder:
\beq
\label{eq_u_notlarge}
|U(n_1,n_2)|\le m_{\mathrm{int}}|\lambda|,\quad \text{for some}\,\,m_{\mathrm{int}}>0.
\eeq
The possibility of considering background potentials of low complexity can already be traced back to the methods of \cite{BGS}, although it was not stated there. While some low complexity potentials (such as Fibonacci hamiltonians) are known to cause singular continuous spectra by themselves even at small coupling, our results show that these effects are dominated by localization caused by quasiperiodic potentials at large disorder.

\subsection{Symmetric and asymmetric single-particle potentials}
The results also depend on whether or not the potential $v$ has certain symmetries. In the latter, we will always assume that $v\in C^{\omega}(\T;\R)$ and will identify $C^{\omega}(\T,\R)$ with the space of all $1$-periodic real analytic functions on $\R$. In addition, we will assume, without loss of generality, that $\int_{\T}v(\theta)\,d\theta=0$ and that $1$ is the smallest period of $v$. We call $v$ {\it symmetric} if at least one of the following conditions holds:
\begin{enumerate}
	\item $v(\theta_{\mathrm{sym}}+\theta)=-v(\theta_{\mathrm{sym}}-\theta)$ for some $\theta_{\mathrm{sym}}\in \T$ and all $\theta$.
	\item $v(\theta+1/2)=-v(\theta)$ for all $\theta$.
\end{enumerate}
This symmetries will be called Type I and Type II, respectively. If none of them holds, we will call $v$ {\it asymmetric}. In case of Type I symmetry, by shifting the function, {\it we can assume without loss of generality that $\theta_{\mathrm{sym}}=0$}. For example, the (shifted) almost Mathieu potential $\cos(2\pi \theta-\pi/2)=\sin 2\pi \theta$ satisfies both symmetries, the function $\sin2\pi \theta+\sin 4\pi \theta $ has only Type I, and $\cos 2\pi \theta+\sin 6\pi \theta $ has only Type II symmetry.

\subsection{Main results}
The following two theorems are main results of the present paper.
\begin{theorem}
\label{main_asym}
Suppose that $v$ is asymmetric. For any $\varepsilon_{\mathrm{freq}}>0$ there exists $\lambda_0=\lambda_0(v,\varepsilon_{\mathrm{freq}},m_{\mathrm{int}},C_{\mathrm{int}})$ such that the following is true: for every $\lambda\ge\lambda_0$, every $(\theta_1,\theta_2)\in \T^2$, and every interaction potential $U$ satisfying the assumptions of Section $2.1$, there is a set 
$$
\Omega(U,\lambda,\theta_1,\theta_2)\subset \T,\quad  |\T\setminus \Omega(U,\lambda,\theta_1,\theta_2)|<\varepsilon_{\mathrm{freq}},
$$ 
with the operator $H(\theta_1,\theta_2)$ satisfying complete Anderson localization for all $\omega\in \Omega(U,\lambda,\theta_1,\theta_2)$ and for all possible translations of $U$.
\end{theorem}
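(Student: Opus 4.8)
The plan is to prove Anderson localization for $H(\theta_1,\theta_2)$ on $\ell^2(\Z^2)$ by treating it as a two-dimensional quasiperiodic operator and running a multiscale / Green's function estimate scheme, following the general strategy of \cite{BGS} adapted to low-complexity background potentials. The key input is large-disorder positivity of the Lyapunov exponent / quantitative estimates on the finite-volume Green's functions, obtained from the fact that the potential part $\lambda(v(n_1\omega+\theta_1)+v(n_2\omega+\theta_2))+U(n_1,n_2)$ dominates $\Delta$ when $\lambda$ is large. I would first fix a box size and set up the eigenvalue parametrization: for a finite box $\Lambda\subset\Z^2$, the eigenvalues of $H_\Lambda$ as functions of $\omega$ (with $\theta_1,\theta_2$ fixed) are real analytic, and one needs to show that ``resonances'' — small denominators $|E-E_j(\Lambda)|$ — are rare in $\omega$. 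Here the asymmetry of $v$ is what guarantees that the relevant analytic functions of $\omega$ are non-degenerate (not identically small, with controlled number of near-zeros), so that a semialgebraic / \L{}ojasiewicz-type argument bounds the measure of bad $\omega$.

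The main steps, in order, would be: (1) establish an a priori large-deviation estimate at an initial scale $N_0$: for all but an $\er_{\mathrm{freq}}/2$-measure set of $\omega$, and for energies $E$ away from a negligible set, the box $[0,N_0-1]^2$ is ``regular'' (good Green's function bound $\|G_\Lambda(E)\|\le e^{N_0^{1-}}$ with off-diagonal exponential decay), using $\lambda$ large and the asymmetry of $v$ to control the exceptional set via the complexity bound \eqref{eq_lowcomp_def} — crucially the number of distinct local configurations of $U$ is only $N_0^{C_{\mathrm{int}}}$, so the union bound over translations costs only a polynomial factor; (2) prove the inductive step: given regularity at scale $N_k$ on a large-measure set, deduce regularity at scale $N_{k+1}=N_k^{C}$ by the standard resolvent-expansion / paving argument, showing that two disjoint ``singular'' boxes of scale $N_k$ inside a box of scale $N_{k+1}$ force an algebraic relation in $\omega$ that can hold only on a set of measure $\le e^{-N_k^{c}}$, again using semialgebraic complexity bounds on the eigenvalue parametrization and the low complexity of $U$; (3) sum the exceptional sets over all scales $k$ so the total removed measure is $<\er_{\mathrm{freq}}$, obtaining for $\omega\in\Omega$ exponential decay of Green's functions at all scales and all energies; (4) upgrade Green's function decay to pure point spectrum with exponentially decaying eigenfunctions (complete localization) via the standard argument that a generalized eigenfunction cannot be supported far from a finite region if all large boxes are regular. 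The uniformity over all translations of $U$ is automatic: the argument only ever uses the complexity bound, which is translation-invariant, so $\Omega$ can be chosen to work simultaneously for the whole orbit.

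The hard part will be step (2) — the elimination of double resonances — and specifically controlling how the low-complexity interaction $U$ enters the semialgebraic description of the eigenvalues. Without $U$ the operator is a direct sum and one can use the one-dimensional theory; with $U$ present (and of size up to $m_{\mathrm{int}}|\lambda|$), the two particles are genuinely coupled, so one must track the eigenvalue branches of genuinely 2D boxes as functions of the single frequency $\omega$. The point is that because $U$ takes finitely many values and has low complexity, restricted to any box $H_\Lambda$ depends on $\omega$ (and the fixed $\theta_1,\theta_2$) through finitely many analytic pieces, each of which is a perturbation of size $O(1)$ of a diagonal matrix with entries $\lambda(v(n_1\omega+\theta_1)+v(n_2\omega+\theta_2))+U(n_1,n_2)$; asymmetry of $v$ then ensures these pieces are not ``resonant'' identically in $\omega$, which is exactly what fails in the symmetric case and forces the weaker conclusion of Theorem \ref{main_sym}. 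Making the quantitative non-degeneracy explicit — uniform lower bounds on derivatives or on \L{}ojasiewicz exponents for the relevant families of analytic functions of $\omega$, with constants depending only on $v,C_{\mathrm{int}},m_{\mathrm{int}}$ — is the technical crux; everything else is a careful bookkeeping of the now-standard multiscale induction.
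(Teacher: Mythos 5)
Your high-level outline (initial scale, inductive step, exceptional-set summation, generalized-eigenfunction upgrade) matches the paper's skeleton, and you correctly identify that asymmetry of $v$ is what makes the initial scale work at all energies and that the low complexity of $U$ keeps the union over translations polynomial. But the proposal treats the multiscale induction and double-resonance elimination as ``standard'' BGS-style bookkeeping, and this is exactly where the specific difficulty of this model lives. The quasiperiodic part depends on a single frequency $\omega$: the orbit points $(\{k_1\omega\},\{k_2\omega\})$ lie on a one-parameter family of lines in $\T^2$, whereas BGS has an independent two-parameter frequency vector, and their steep-planes elimination of double resonances relies crucially on the frequency space being full-dimensional relative to the lattice. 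To compensate, the paper carries a strictly stronger inductive hypothesis than BGS: the large-deviation set of phases is exponentially small in measure along \emph{every} line direction in $\T^2$, not just the coordinate directions (this is what Theorem~\ref{initial_sym} and Corollary~\ref{green_main} deliver). That stronger statement is then exploited in two problem-specific ways that do not appear in your sketch: (i) an arithmetic bound (Theorem~\ref{arith}), under a Diophantine condition on $\omega$, on the number of points $(\{k_1\omega\},\{k_2\omega\})$ lying in a \emph{thin} semi-algebraic set, yielding the sub-linear count of bad sub-boxes needed for the paving step; and (ii) a genuinely new energy-elimination argument (Lemmas~\ref{elimination_intervals}, \ref{kakeya}, Theorem~\ref{elimination_main}) that counts singular directions of long line segments inside small semi-algebraic sets using a Kakeya maximal function bound. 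Without something playing the role of (i) and (ii), your step (2) does not close: the ``algebraic relation in $\omega$'' is only one scalar condition, and one must explain why the many translated bad boxes along the $(\omega,\omega)$-orbit don't overwhelm it.

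Two smaller misattributions. First, you cite ``large-disorder positivity of the Lyapunov exponent'' as the key input, but the proof does not use transfer matrices or Lyapunov exponents at all (cf.\ Remark~\ref{main_remarks}(4)); the Green's function bounds come from Cartan's lemma (Proposition~\ref{prop_cartan}) applied to analytic matrix families in a single phase variable. Second, the role of asymmetry is not that eigenvalue branches of $H_\Lambda$ are non-degenerate as functions of $\omega$; it is the more elementary geometric fact (Lemma~\ref{symmetries}) that the level sets of $v(\theta_1)+v(\theta_2)$ contain no line segments, which is precisely what powers the ``all line directions'' large-deviation bound (Proposition~\ref{initial_nosym}) feeding the stronger inductive hypothesis.
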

\begin{theorem}
\label{main_sym}
Fix a background potential $U$ of low complexity. Suppose $v$ is symmetric and admits a bounded analytic extension into $|\im z|\le 20$. For any $\varepsilon_{\mathrm{freq}}>0$ there exist $\lambda_0=\lambda_0(v,\varepsilon_{\mathrm{freq}},m_{\mathrm{int}},C_{\mathrm{int}})$ and $\mu=\mu(v,\varepsilon_{\mathrm{freq}},m_{\mathrm{int}},C_{\mathrm{int}})>0$ such that, for every $\lambda\ge\lambda_0$, every $(\theta_1,\theta_2)\in \T^2$, and every interaction potential $U$ satisfying the assumptions of Section $2.1$, there exists a set 
$$
\Omega(U,\lambda,\theta_1,\theta_2)\subset \T,\quad |\T\setminus \Omega(U,\lambda,\theta_1,\theta_2)|<\varepsilon,
$$ with the operator $H(\theta_1,\theta_2)$ satisfying Anderson localization in the region of energies $E$
\beq
\label{energyrange}
|E-U_j|\ge \frac{\lambda}{e^{(\log\lambda)^\mu}},\quad j=1,\ldots,N_{\mathrm{int}},
\eeq
for all $\omega\in \Omega(U,\lambda,\theta_1,\theta_2)$ and for all possible translations of $U$.
\end{theorem}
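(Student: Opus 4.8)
The plan is to run a two-dimensional multiscale scheme of Bourgain--Goldstein--Schlag type. Concretely, I would first reduce Anderson localization in the energy window \eqref{energyrange} to the following finite-volume input: there is $\gamma\gtrsim\log\lambda$ such that, for $E$ as in \eqref{energyrange} and for all $\omega$ outside a set of measure $<\varepsilon_{\mathrm{freq}}$, the box Green's functions $G_{\Lambda_N}(E)=(H_{\Lambda_N}(\theta_1,\theta_2)-E)^{-1}$ on $\Lambda_N=[-N,N]^2$ satisfy, along a sequence of scales $N_k$, the decay $|G_{\Lambda_N}(E)(x,y)|\le e^{-\gamma|x-y|}$ for $|x-y|\ge N/10$ together with a sub-exponential bound $\|G_{\Lambda_N}(E)\|\le e^{N^{1/2}}$, for all $(\theta_1,\theta_2)$ outside an exceptional set of controlled measure and semialgebraic complexity. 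Granting this, localization follows from the usual arguments (a resolvent expansion upgrading scale $N_k$ to $N_{k+1}=N_k^{C}$, elimination of double resonances, and the Shnol/paving argument counting the finitely many scales at which a fixed generalized eigenvalue is resonant). The uniformity over translations of $U$ is built in because $H$ restricted to $n+\Lambda_N$ is, after translating, $H_{\Lambda_N}$ with $(\theta_1,\theta_2)$ replaced by $(\theta_1+n_1\omega,\theta_2+n_2\omega)$ and $U$ replaced by a translate, and the low-complexity bound \eqref{eq_lowcomp_def} keeps the family of all translates of $U$ of bounded complexity; equidistribution of $(n_1\omega+\theta_1,n_2\omega+\theta_2)$ then converts the ``for most $(\theta_1,\theta_2)$'' estimates into ``for most $\omega$'' with $(\theta_1,\theta_2)$ fixed.

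\textbf{Initial scale.} For large $\lambda$ the operator $H_{\Lambda_N}(\theta_1,\theta_2)-E$ is diagonally dominant off the \emph{resonant sites}: those $(n_1,n_2)$ with $|\lambda(v(n_1\omega+\theta_1)+v(n_2\omega+\theta_2))+U(n_1,n_2)-E|<\tau$, with threshold $\tau\sim\lambda^{1/2}$, so that a purely non-resonant box has $\|G_{\Lambda_N}\|\le\tau^{-1}$ and decay rate $\gamma\sim\log(\lambda/\tau)\sim\log\lambda$. Using \eqref{eq_u_notlarge} and the finiteness of the value set $\{U_1,\dots,U_{N_{\mathrm{int}}}\}$, a site is resonant precisely when $(n_1\omega+\theta_1,n_2\omega+\theta_2)$ lies within $O(\tau/\lambda)$ of the level set $L_{t_j}=\{(x,y)\in\T^2:v(x)+v(y)=t_j\}$, where $t_j=(E-U_j)/\lambda$ and $j=U(n_1,n_2)$. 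I would take the initial scale $N_0\sim\lambda^{c}$ with a small constant $c$; then equidistribution shows that $\Lambda_{N_0}$ contains \emph{no} resonant site for all $(\theta_1,\theta_2,\omega)$ outside a set of small measure, giving the base case. The measure and complexity bounds for the exceptional $(\theta_1,\theta_2)$ at later scales come from subharmonicity of $|\Lambda_N|^{-1}\log|\det(H_{\Lambda_N}(\theta_1,\theta_2)-E)|$ in the complexified phases together with a Cartan-type estimate; this is where analyticity of $v$ and the bounded extension to $|\im z|\le20$ enter.

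\textbf{The geometric core: why \eqref{energyrange}, and why it is enough.} For the inductive step the resonant set inside $\Lambda_N$ must have a \emph{sub-linear} structure (coverable by well-separated scale-$N_0$ boxes, with no long connected chain). The obstruction in the symmetric case is that $v(x)+v(y)$ has $0$ as a critical value whose level set $L_0$ contains a straight line: Type I symmetry (normalized to $\theta_{\mathrm{sym}}=0$, so $v$ is odd) gives $v(x)+v(-x)\equiv0$, and Type II symmetry gives $v(x)+v(x-1/2)\equiv0$. Since the relevant frequency vector is the \emph{diagonal} $(\omega,\omega)$, such a line pulls back to a genuine one-dimensional quasiperiodic resonant chain of length $\sim N$ inside $\Lambda_N$ whenever $t_j\approx0$, i.e. whenever $E\approx U_j$ --- and no two-dimensional resolvent expansion survives that. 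This is exactly why a neighborhood of the energies $U_j$ must be removed, and \eqref{energyrange} removes it. Conversely, for $|E-U_j|\ge\lambda e^{-(\log\lambda)^{\mu}}$ one has $|t_j|\ge\kappa:=e^{-(\log\lambda)^{\mu}}$; since the only dangerous critical value of $v(x)+v(y)$ is $0$ (the others are $O(1)$-separated and are nondegenerate extrema, with pointlike level sets), the curve $L_{t_j}$ is then smooth with curvature bounded below by a fixed power of $\kappa$. Imposing a Diophantine condition on $\omega$ (removing a further $\varepsilon_{\mathrm{freq}}$-fraction) to count lattice points near curved arcs, the $O(\tau/\lambda)$-neighborhood of $L_{t_j}$ inside $\Lambda_N$ then has the required sub-linear structure for every $N\ge N_0$, because $N_0\sim\lambda^{c}\gg\kappa^{-O(1)}=e^{O((\log\lambda)^{\mu})}$ --- and this inequality is precisely what forces $\mu<1$, pinning down the admissible $\mu$ in the statement. (For asymmetric $v$, Theorem \ref{main_asym}, the same dichotomy holds with $\kappa$ a fixed constant, because then $v(x)+v(y)$ has no linear level sets at all, which is why there no energy need be excluded; we may take that as given.)

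\textbf{Multiscale iteration, conclusion, and the main obstacle.} With the sub-linear resonant structure in hand, the inductive step is standard in outline: pave $\Lambda_{N_{k+1}}$ by scale-$N_k$ boxes, expand the resolvent off the good boxes, and control the resonant sub-boxes using their separation, after deleting the $\omega$ for which two separated sub-regions are simultaneously $E$-resonant; that ``doubly resonant'' set is semialgebraic of degree bounded in terms of $N$, of the analyticity data of $v$ (via polynomial truncation), and of $C_{\mathrm{int}}$ (via \eqref{eq_lowcomp_def}, uniformly over translates of $U$), hence superpolynomially small in measure. Summing these sets and the exceptional sets from the initial scale keeps the total $\omega$-measure below $\varepsilon_{\mathrm{freq}}$; iterating over $N_{k+1}=N_k^{C}$ yields exponential off-diagonal decay of $G_{\Lambda_{N_k}}(E)$ for $E$ in \eqref{energyrange} and $\omega$ in the surviving set, and the generalized-eigenfunction argument gives Anderson localization. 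The hardest part is the geometric/arithmetic estimate above --- quantifying the degradation of curvature of the level curves $L_{t_j}$ as $t_j\to0$, and combining it with the equidistribution of the orbit to produce separated resonant sub-boxes already at the initial scale $N_0\sim\lambda^{c}$ --- since this is what dictates the precise form $\lambda e^{-(\log\lambda)^{\mu}}$ of the excluded window. A secondary, more technical burden is carrying the low-complexity background $U$, uniformly over all its translates, through the semialgebraic-complexity bookkeeping.
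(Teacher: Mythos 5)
Your proposal correctly identifies the high-level BGS template and, importantly, the geometric reason that a window around the $U_j$ must be removed: the level set $\{v(\theta_1)+v(\theta_2)=0\}$ contains a straight line for symmetric $v$ (Lemma \ref{symmetries}), and because the frequency vector is the diagonal $(\omega,\omega)$, that line pulls back to a genuine $1$D resonant chain. This much matches the paper. However, there are two substantive gaps in the proposed mechanism, both related to the fact that the lattice is $2$D while the frequency is effectively $1$D.

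\textbf{First gap: the strong inductive hypothesis.} You propose to control the resonant set at the initial scale through lower bounds on the curvature of the level curves $L_{t_j}$, and then count lattice points near a curved arc. The paper does something genuinely different and, I think, more robust: instead of geometric curvature of the explicit level set (which becomes irrelevant at higher scales, where the bad set is defined by eigenvalue resonances of sub-boxes, not by a single analytic level curve), the paper carries through the multiscale scheme the \emph{stronger inductive assumption} that the bad phase set has exponentially small $1$D sections by line segments in \emph{all} directions, not merely coordinate directions (Theorem \ref{initial_sym}, the hypothesis \eqref{stronger_assumption} of Proposition \ref{bgs_44}, and the conclusion \eqref{green_main_eq}). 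The initial-scale version of this is established via analytic function theory (Proposition \ref{harnack} and the gradient bounds in Lemma \ref{grad_lemma}), not curvature bounds; and the arithmetic lattice-point count (Theorem \ref{arith}) is a semi-algebraic argument requiring only that the bad set contain no line segment longer than $\min_{1\le|k|\le2N}\|k\omega\|$, with no explicit curve geometry. Your curvature route is not obviously false at scale $N_0$, but it does not propagate, since at later scales the bad set is no longer a neighborhood of $L_{t_j}$. Also note that absence of a line segment does not by itself give a curvature lower bound for general symmetric analytic $v$ (curvature can vanish at isolated points), so the quantitative bound $\kappa^{-O(1)}$ you rely on would need a separate justification.

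\textbf{Second gap: the elimination of double resonances is not ``standard in outline.''} You write that deleting $\omega$ for simultaneous $E$-resonance at separated scales gives ``superpolynomially small'' measure by semi-algebraic complexity alone. The paper explicitly flags this as the hard part: the steep-planes argument of BGS relies on having as many free frequency parameters as lattice dimensions, which is false here (the frequency vector is constrained to the diagonal). The paper needs the full strength of the strong inductive hypothesis described above, converted through Lemmas \ref{elimination_intervals} and \ref{kakeya} (the latter using Wolff's Kakeya maximal function bound on $\mathbb S^2$) into a count of the singular directions $\xi_k$ in which the bad set $\Seps$ can contain long line segments, and then a two-scale ($M$-box versus $N$-box) argument in Theorem \ref{elimination_main} to handle the exceptional directions. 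None of this is anticipated by your outline, and without it the measure of $\omega$ removed at each scale need not be summable. This is the central new contribution of the paper beyond BGS, and it is missing from the proposal.

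A smaller point: the form $\lambda e^{-(\log\lambda)^\mu}$ of the excluded window does not, in the paper, come from the relation $N_0\gg\kappa^{-O(1)}$ you posit. It comes from the shape of the $1$D large deviation estimate in Theorem \ref{initial_ldt}, where the measure of the bad set along a line segment is $\sim\delta^{-c_2/\log|c_3E|}$, fed through the initial-scale requirement \eqref{eq_initial_lambdaNE} and the choice of $\widetilde N_0$ in Corollary \ref{green_main}; the exponent $\mu$ reflects the interaction between $b$, $\mu$, and $C_1$ in that scheme, not a curvature degradation rate.
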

\begin{remark}
\label{main_remarks}	
\begin{enumerate}
	\item The part of the spectrum removed by \eqref{energyrange} is contained in $N_{\mathrm{int}}$ intervals of size $o(|\lambda|)$.
	\item The condition on separability of $v$ is irrelevant in Theorem \ref{main_asym}.  The potential $v(\theta_1)+v(\theta_2)$ can be replaced by an analytic function $w(\cdot,\cdot)\in C^{\omega}(\T^2)$ of two variables that is not constant on any straight line segment, with the same proof. 
	\item  An analogue of Theorem \ref{main_sym} can also be obtained for non-separable case, assuming $|E-(\lambda w_i+U_j)|\ge \lambda\varepsilon$ for every $w_i$ such that $w(\theta_1,\theta_2)\equiv w_i$ on some straight line segment, and every value $U_j$ of $U$, also with the same proof.
	\item As discussed above, the inclusion of a background potential $U$ of low complexity could have been done already in \cite{BGS}, as well as in the other papers that establish perturbative results by semi-algebraic techniques and do not involve Lyapunov exponents/cocycles (for example, in \cite{BJ_band}).
	\item Suppose that $v$ satisfies Type II symmetry, $U=0$, $\theta_1=\theta_2=1/4$. Then one can easily check that $\psi(n_1,n_2)=(-1)^{n_1}\delta_{n_1 n_2}$ solves the eigenvalue equation $H(1/4,1/4)\psi=0$. While this does not contradict purely point spectrum, all known proofs of Anderson localization show that any solution of the eigenvalue equation decays exponentially, which does not allow the existence of states like $\psi$. This example suggests that some stronger versions of localization can break down at zero energy, but only in symmetric cases (because otherwise Theorem \ref{main_asym} holds). Possible scenarios of delocalization at zero energy in different models are described in \cite{JB,ES}.
    \item The condition in Theorem \ref{main_sym} of analyticity of $v$ in the strip of size 20 is technical and can possibly be removed with some extra work.

	\item The operator family $H(\theta_1,\theta_2)$ is not ergodic because $U$ is not assumed to have any translational invariance. However, one can prove localization simultaneously for all translations of $U$.
	\item Our results are perturbative, in the sense that one always has to remove a positive measure set of frequencies. However, our requirements on the frequency are more explicit. The bound on $\lambda_0$ can be expressed, in principle, through the Diophantine constant $C_{\mathrm{dio}}$ of $\omega$ (see \eqref{eq_diophantine}). The parameter $\varepsilon_{\mathrm{freq}}$ in Theorems \ref{main_asym} and \ref{main_sym} is, essentially, the measure of frequencies for which \eqref{eq_diophantine} does not hold with this $C_{\mathrm{dio}}$. Afterwards, as usually happens in localization proofs, one has to remove an extra set of frequencies of measure zero, for which we do not have any arithmetic description, and which depends on $\theta_1$, $\theta_2$, $\lambda$, and other parameters.
	\item Theorems \ref{main_asym} and \ref{main_sym} are formulated for the case of a single phase $(\theta_1,\theta_2)$. However, one can extend them for a full measure set of phases, see 
Remark \ref{full_measure}. We do not believe this argument is new, however, in the case of perturbative results, it has not been explicitly stated in the literature.
\item In the case when $\|U\|_{\infty}$ is bounded by a constant independent of $\lambda$, the removed energy intervals in Theorem \ref{main_sym} become one neighborhood of zero energy of the same size.
\end{enumerate}
\end{remark}
\subsection{Structure of the paper} The main lines of argument are parallel to the only known techniques of establishing multi-dimensional Anderson localization \cite{BGS,B4}. The proof consists of two main steps: a large deviation theorem for Green's function at fixed energy (Sections 3 -- 6), and elimination of energy (Sections 7 -- 9).

In the large deviation result, we need to obtain stronger inductive assumptions to carry over from a scale to the next scale. That is, we require that the large deviation set has small sections by lines in all directions, rather than in the coordinate directions only. In Section 3, we establish these bounds at the initial scale. In the case of no symmetries, the initial scale bounds at all energies follow from Proposition \ref{initial_nosym} and are essentially known. In the case when symmetries are present, similar argument would immediately work for $|E-U_j|\ge \varepsilon|\lambda|$. To get better initial scale bounds in the wider region \eqref{energyrange}, one has to apply 1D large deviations more carefully, as done in Section 3. Section 4 contains preliminaries from theory of semi-algebraic sets (some proofs are provided in the Appendix). In order to pass to the next scale, one needs an arithmetic bound on the number of bad boxes (cf. \cite[Section 3]{BGS}). In our proof, a stronger inductive assumption is carried over to the next scale, but there is less freedom in removal of frequencies, as the frequency vector is always of the form $(\omega,\omega)$. Still, a relatively simple argument in Section 5 shows that one can get a sub-linear bound on the number of bad sites assuming a Diophantine condition on $\omega$. In Section 6, we provide the inductive argument for obtaining Green's function estimates at fixed energy. After the preparations from previous sections, the proof goes along the lines of \cite{BGS}. In particular, our Proposition \ref{bgs_44} is an analogue of \cite[Lemma 4.4]{BGS}. For the convenience of the reader, we include the proof based on Cartan's lemma in the Appendix. Our Corollary \ref{green_main} is the main result of that section and is similar to \cite[Proposition 4.6]{BGS}; however, the control of the constants is somewhat more delicate, and we provide the argument in the main text.

The elimination of energy requires more work, as the steep planes argument used in \cite{BGS}, \cite{B4} significantly relies on the fact that the dimension of the lattice is the same as the dimension of the set of frequency vectors, which is not the case in our situation. However, our large deviation theorem is stronger, and we take advantage of that. In Section 7, we use more elaborate semi-algebraic arguments and bounds on Kakeya maximal functions, in order to estimate the number of directions in which the large deviation set can contain a long line segment, see Lemmas \ref{elimination_intervals}, \ref{kakeya}. In Section 8, we show that double resonances can be avoided by removing a small frequency set, if one combines the results of Section 7 with some careful choice of scales. Section 9 completes the proof of localization, which at this point becomes fairly standard. In Remark \ref{full_measure}, we also explain how to obtain results for sets of phases of full measure. While this argument is not new, it has not been explicitly stated in previous works.

\section{Symmetries of the potential and the initial scale}
Everywhere in this section, we will assume
\begin{enumerate}
	\item $v\in C^{\omega}(\T;\R)$.
	\item $1$ is the smallest period of $v$ (in particular, $v\neq\mathrm{const}$).
	\item $\int_{\T}v(\theta)\,d\theta=0$.
	\item $|v(\theta)|\le 1$ for all $\theta\in \T$.
\end{enumerate}
The results will heavily rely on the structure of level sets of the function $v(\theta_1)+v(\theta_2)$. We will have to avoid the situation when these sets contain straight line segments.
\begin{lemma}
\label{symmetries}
The level set $v(\theta_1)+v(\theta_2)=E$ contains a straight line segment if and only if $E=0$ and $v$ is symmetric. In the case of Type I symmetry $v(\theta_{\mathrm{sym}}+\theta)=-v(\theta_{\mathrm{sym}}-\theta)$ this segment has the equation $\theta_{\mathrm{sym}}-\theta_1=\theta_{\mathrm{sym}}+\theta_2$, and in case of type II symmetry $v(\theta+1/2)=-v(\theta)$ this segment is $\theta_1=\frac12+\theta_2$ $($both equalities are modulo $\Z)$.
\end{lemma}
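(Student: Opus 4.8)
\emph{Plan.} The idea is to restrict $v(\theta_1)+v(\theta_2)$ to a hypothetical line segment, convert the resulting identity into a functional equation for $v$ valid on all of $\R$ by real analyticity, and then read off which symmetries of $v$ are forced by comparing periods. The converse inclusion will be immediate.

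\emph{Reduction.} A segment in $\T^2$ lifts to a segment in $\R^2$ on which $v(\theta_1)+v(\theta_2)\equiv E$, with $v$ now the $1$-periodic lift. The line carrying it is either vertical ($\theta_1=c$), horizontal ($\theta_2=c$), or a graph $\theta_2=a\theta_1+b$ with $a\ne 0$. In the vertical case $\theta_2\mapsto v(\theta_2)$ would be constant on an interval, hence $v\equiv\const$ by real analyticity, contradicting assumption (2); the horizontal case is excluded the same way. So the segment lies on a line $\theta_2=a\theta_1+b$ with $a\ne0$, and $v(\theta_1)+v(a\theta_1+b)=E$ holds for $\theta_1$ in a nontrivial interval; since both sides are real analytic in $\theta_1$, it holds for all $\theta_1\in\R$. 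The central step is then to rewrite this as $v(a\theta_1+b)=E-v(\theta_1)$: the right-hand side, as a function of $\theta_1$, is a non-constant $1$-periodic function whose group of periods is exactly $\Z$ (this is where assumption (2), that $1$ is the smallest period, enters), while the left-hand side has group of periods $\tfrac{1}{|a|}\Z$; equal functions have the same period group, so $|a|=1$.

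\emph{Finishing.} Once $a=\pm1$ the remainder is bookkeeping. Integrating $v(\theta_1)+v(a\theta_1+b)=E$ over a period and using $\int_{\T}v\,d\theta=0$ gives $E=0$ in both cases. For $a=1$ this leaves $v(\theta_1+b)=-v(\theta_1)$; iterating gives $v(\theta_1+2b)=v(\theta_1)$, so $2b\in\Z$, and $b\in\Z$ is impossible since it would force $v\equiv0$; hence $b\equiv\tfrac12\pmod{\Z}$, i.e.\ Type II symmetry, and the segment lies on $\theta_1=\tfrac12+\theta_2$. For $a=-1$ we obtain $v(b-\theta_1)=-v(\theta_1)$; setting $\theta_{\mathrm{sym}}=b/2$ and substituting $\theta_1=\theta_{\mathrm{sym}}+\theta$ turns this into $v(\theta_{\mathrm{sym}}+\theta)=-v(\theta_{\mathrm{sym}}-\theta)$, i.e.\ Type I symmetry, and the segment lies on $\theta_1+\theta_2=2\theta_{\mathrm{sym}}$, which is the asserted equation $\theta_{\mathrm{sym}}-\theta_1=\theta_2-\theta_{\mathrm{sym}}$. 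Conversely, if $v$ has Type I or Type II symmetry, the displayed line is by construction contained in $\{v(\theta_1)+v(\theta_2)=0\}$, which completes the proof.

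\emph{Main obstacle.} The only genuinely delicate point is the period-comparison step: one must conclude $|a|=1$, not merely $a\in\Q$. Knowing that a non-constant continuous $1$-periodic function has period group exactly $\Z$ under hypothesis (2) makes this clean. An alternative route expanding $v$ in Fourier series and matching exponentials would also work, but excluding transcendental (indeed all non-unimodular) $a$ there takes slightly more care, so I would prefer the period-group argument.
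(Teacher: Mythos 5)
Your proof is correct and follows essentially the same route as the paper: reduce to the identity $v(\theta)+v(a\theta+b)\equiv E$, use that $1$ is the smallest period of $v$ to force $a=\pm 1$, integrate to get $E=0$, and then split the two cases $a=1$ (Type II, $b=1/2$) and $a=-1$ (Type I with $\theta_{\mathrm{sym}}=b/2$). The only difference is that the paper simply asserts ``since $1$ is the smallest period of $v$, we have $a=\pm1$,'' while you supply the period-group comparison that justifies it; you also explicitly rule out vertical and horizontal segments and record the converse, both of which the paper leaves implicit.
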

\begin{proof}
	Suppose that $v(\theta)+v(a\theta+b)\equiv E$. Since $1$ is the smallest period of $v$, we have $a=\pm 1$, and hence, from comparing the mean values, we have $E=0$.
Suppose that $a=1$. Then $v(\theta)=-v(\theta+b)=v(\theta+2b)$, and hence $b=1/2$. If $a=-1$, then $v(\theta)=-v(b-\theta)$. Then the function $v_1(\theta)=v(b/2+\theta)$ is odd, and has same symmetry for $a=1$ if and only if $v$ has it.
\end{proof}
In the rest of this section, we obtain Green's function estimates at the initial scale for $H$. We will need to use some large deviation theorems for analytic functions. The case of asymmetric $v$ is essentially known.
\begin{prop}
\label{initial_nosym}
Suppose $w\in C^{\omega}(\T^2)$ is non-constant on any line segment in $\T^2$. Then there are positive constants $C,\alpha$ depending only on $v$ such that, for any unit line segment $L\subset \mathbb R^2$, any $E\in \mathbb R$, and any $\delta>0$, we have
\beq
\label{nosym_bound}
|\{(\theta_1,\theta_2)\in L\colon |w(\theta_1,\theta_2)-E|\le \delta\}|_1\le C\delta^{\alpha},
\eeq
where $|\cdot|_1$ denotes the one-dimensional Lebesgue measure. The constant $C$ can be chosen uniformly in $E$ on any compact interval.
\end{prop}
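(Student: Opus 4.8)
The plan is to reduce the two-dimensional sublevel estimate to a \emph{uniform} one-dimensional one along the segments $L$, and to extract the required uniformity from a compactness argument. First I would use that, $w$ being real analytic on the compact manifold $\T^2$, it extends to a holomorphic function bounded by some $M$ on a complex neighborhood $\{(z_1,z_2)\colon |\im z_j|<\rho_0\}$ of $\T^2$. Parametrizing a unit segment as $L=L_{p,\xi}=\{p+t\xi\colon t\in[0,1]\}$, where, using $\Z^2$-periodicity, $(p,\xi)$ may be taken in the compact set $K:=[0,1]^2\times S^1$, the function $g_{p,\xi}(t):=w(p+t\xi)$ then extends holomorphically to the strip $|\im t|<\rho_0$ with $|g_{p,\xi}|\le M$ there, uniformly in $(p,\xi)\in K$. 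By Cauchy estimates all derivatives $\partial_t^j w(p+t\xi)$, which depend continuously on $(p,\xi,t)\in K\times[0,1]$, are bounded on $[0,1]$ by constants $M_j$ independent of $(p,\xi)$.

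Next I would prove a uniform bound on the order of vanishing. For $N\ge 1$ set
\[
Z_N:=\{(p,\xi,t)\in K\times[0,1]\colon \partial_t^j w(p+t\xi)=0,\ j=1,\dots,N\}.
\]
These are nested closed subsets of the compact space $K\times[0,1]$. If every $Z_N$ were nonempty, their intersection would be nonempty and would contain a point $(p_*,\xi_*,t_*)$ at which $\partial_t^j w(p_*+t\xi_*)|_{t=t_*}=0$ for all $j\ge 1$; since $t\mapsto w(p_*+t\xi_*)$ is real analytic, it would then be constant, contradicting the hypothesis that $w$ is non-constant on line segments (note this applies to irrational directions as well). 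Hence $Z_N=\emptyset$ for some $N$, and by continuity and compactness there is $c_0>0$ with $\max_{1\le j\le N}|\partial_t^j w(p+t\xi)|\ge c_0$ for all $(p,\xi)\in K$, $t\in[0,1]$. I stress that this bound does not involve $E$: for any $E$, if $g_{p,\xi}(t_0)=E$ then $g_{p,\xi}-E$ vanishes at $t_0$ to some finite order $k$ with $1\le k\le N$ and $|g_{p,\xi}^{(k)}(t_0)|\ge c_0$.

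Then I would invoke the classical one-dimensional sublevel set estimate of van der Corput type: if $\phi\in C^k(I)$ on an interval $I$ with $|\phi^{(k)}|\ge a$ on $I$, then $|\{t\in I\colon |\phi(t)|\le\delta\}|\le C_k(\delta/a)^{1/k}$. Fixing $(p,\xi)$ and $E$, for each $t\in[0,1]$ choose $j(t)\le N$ with $|g_{p,\xi}^{(j(t))}(t)|\ge c_0$; since $|g_{p,\xi}^{(j(t)+1)}|\le M_{N+1}$, this persists with $c_0/2$ in place of $c_0$ on an interval of length $\gtrsim c_0/M_{N+1}$ about $t$. Covering $[0,1]$ by these intervals, passing to a finite subcover and then to a disjoint refinement, I get a partition of $[0,1]$ into $O(M_{N+1}/c_0)$ subintervals on each of which $|g_{p,\xi}^{(j)}|\ge c_0/2$ for some $j\le N$. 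Applying the sublevel estimate on each subinterval with $a=c_0/2$, $k=j\le N$, and summing, I obtain $|\{t\in[0,1]\colon |g_{p,\xi}(t)-E|\le\delta\}|\le C\delta^{1/N}$ for all small $\delta$ (the bound being trivial for $\delta$ bounded below), with $C$ depending only on $N$, $c_0$, $M_{N+1}$, hence only on $w$. Taking $\alpha=1/N$ gives \eqref{nosym_bound}, and uniformity in $E$ — in fact for all $E\in\R$, since $|w|\le M$ — is automatic because every constant above was chosen independently of $E$.

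I expect the only genuinely delicate point to be the compactness argument of the second paragraph: one must make sure that the order of vanishing of $g_{p,\xi}-E$ is governed by the $E$-independent derivatives $\partial_t^j w$, $j\ge 1$, so that the uniform lower bound $c_0$ is obtained without reference to $E$, and one must check that the limit point indeed forces constancy along a genuine line segment (via analyticity of $t\mapsto w(p_*+t\xi_*)$). Everything else is routine bookkeeping with analytic bounds and standard sublevel set estimates.
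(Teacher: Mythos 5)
Your proposal is correct, and it takes a somewhat different, more self-contained route than the paper. The paper's proof is essentially a citation: it packages all line segments through a single analytic function $f(\theta_1,\theta_2,\eta)$ of three variables (with $\theta_2,\eta$ parametrizing the segment and $\theta_1$ the arclength), notes that $f$ is non-constant in $\theta_1$ for each fixed $(\theta_2,\eta)$, and then invokes the uniform sublevel-set estimate for analytic functions from the beginning of Section 4 of Bourgain--Goldstein--Schlag and Lemma 11.4 of Goldstein--Schlag. That cited lemma is where the compactness-in-parameters and the quantitative measure bound are hiding. You unpack that black box explicitly: you prove by a nested-compacts argument that some derivative of order $j\le N$ along each segment is bounded below by a uniform $c_0>0$ (crucially, the lower bound is on $\partial_t^j$ for $j\ge 1$, hence $E$-independent), and then run the elementary van der Corput sublevel-set estimate after partitioning $[0,1]$ into $O(M_{N+1}/c_0)$ intervals on each of which a single derivative order works. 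This buys transparency and independence from the cited Cartan/analytic machinery, at no loss of strength (you get the same form $C\delta^{1/N}$ with constants depending only on $w$). Two small bookkeeping remarks: (i) to extract a partition with $O(M_{N+1}/c_0)$ pieces, one should pick a minimal (e.g.\ greedy left-to-right) finite subcover before refining, so that the count is really controlled by the reciprocal of the Lebesgue number rather than by the initial cover's cardinality; (ii) when you say "the bound being trivial for $\delta$ bounded below," that is what makes it legitimate to replace the various exponents $1/k$, $k\le N$, by the single $\alpha=1/N$. Both are routine and do not affect correctness.
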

\begin{proof}
Define an analytic function $f\in C^{\omega}((-1,1)^3)$ by
$$
f(\theta_1,\theta_2,\eta)=w(\theta_1 \cos 2\pi \eta+\theta_2 \sin 2\pi\eta,-\theta_1 \sin 2\pi \eta+\theta_2 \cos 2\pi \eta).
$$
The function $f(\cdot, \theta_2,\eta)$ is non-constant in $\theta_1$ at any fixed $\theta_2,\eta$. Moreover, by the choice of $\theta_2$ and $\eta$, one can parametrize any line segment with $\theta_1$ being the natural length parameter. Then, one can refer to the beginning of Section 4 of \cite{BGS} and \cite[Lemma 11.4]{GS}.
\end{proof}
\begin{remark}
\label{nosym_bound_remark}	
If $v$ is asymmetric, then $w(\theta_1,\theta_2)=v(\theta_1)+v(\theta_2)$ satisfies the assumptions of Proposition \ref{nosym_bound}.
\end{remark}

We now consider the symmetric separable case. Any line segment in $L\subset \T^2$ can be parametrized either by $\theta_1$ or $\theta_2$. Since $v$ is $1$-periodic, the restriction of $v(\theta_1)+v(\theta_2)$ onto $L$ can be completely described by the function
\beq
\label{vsym_theta}
\theta\mapsto v(\theta)+v(a\theta+b),\quad -1\le a\le 1, \,\,0\le b\le 1.
\eeq
Let 
$$
g(v,a,b)=\max\limits_{-1/2\le\theta\le 1/2}|v'(\theta)+av'(a\theta+b)|.
$$
\begin{lemma}
\label{grad_lemma}
Suppose that $v$ is symmetric. Let also $-1\le a\le 1$, $0\le b<1$. Then, for some $C(v)>0$, we have
\begin{enumerate}
\item If $v$ has only Type I symmetry with $\theta_{\mathrm{sym}}=0$, then
\beq
\label{eq_lowerg_1}
	g(v,a,b)\ge C(v)(|a+1|+b(1-b))
\eeq
\item If $v$ has only Type II symmetry, then
\beq
\label{eq_lowerg_2}
g(v,a,b)\ge C(v)(|a-1|+|b-1/2|).
\eeq
\item If $v$ has both Type I (with $\theta_{\mathrm{sym}}=0$) and Type II symmetries, then
\beq
\label{eq_lowerg_3}
g(v,a,b)\ge C(v)(|a^2-1|+b(1-b)|b-1/2|).
\eeq
\end{enumerate}
\end{lemma}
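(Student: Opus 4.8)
The plan is to regard $g(v,a,b)$ as a continuous function on the compact parameter region $-1\le a\le1$, $b\in\mathbb T$, and to combine a soft compactness argument away from its zero set with an explicit first-order analysis near that set. By analyticity of $v$, one has $g(v,a,b)=0$ precisely when $v'(\theta)+av'(a\theta+b)\equiv0$, equivalently when $v(\theta)+v(a\theta+b)$ is constant; by Lemma~\ref{symmetries} and its proof this forces $a=\pm1$ and pins $b$ down to finitely many ``resonant'' pairs. In the case of only Type~I symmetry with $\theta_{\mathrm{sym}}=0$ the only resonant pair is $(a,b)=(-1,0)$ (here one uses that an odd $v$ has Type~I symmetry centers only at $0$ and $1/2$, and that $a=1$ would require Type~II); in the case of only Type~II the only resonant pair is $(1,1/2)$; in the case of both symmetries the resonant pairs are $(-1,0)$ and $(1,1/2)$. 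On the complement of a fixed neighbourhood of this finite set, continuity and compactness give $g\ge c(v)>0$, whereas the right-hand sides of \eqref{eq_lowerg_1}--\eqref{eq_lowerg_3} are bounded above by an absolute constant; so the inequalities hold there once $C(v)$ is small enough, and it remains to analyse each resonant pair.

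Near a resonant pair $(a_*,b_*)$ with $a_*=\pm1$, write $a=a_*+\alpha$, $b=b_*+\beta$ and Taylor-expand in $(\alpha,\beta)$, uniformly in $\theta\in[-1/2,1/2]$ (using that $v''$, $v'''$ are bounded). In each of the three cases, after cancelling the vanishing zeroth-order term via the pertinent relation ($v'$ even for Type~I with $\theta_{\mathrm{sym}}=0$, and $v'(\theta+1/2)=-v'(\theta)$ for Type~II), the linear term collapses to the \emph{same} pair of functions:
\[
v'(\theta)+av'(a\theta+b)=\pm\Bigl(\alpha\,\tfrac{d}{d\theta}\bigl(\theta v'(\theta)\bigr)+\beta\,v''(\theta)\Bigr)+O(\alpha^2+\beta^2),
\qquad \tfrac{d}{d\theta}\bigl(\theta v'\bigr)=v'+\theta v''.
\]
Hence $g(v,a,b)\ge \sup_{|\theta|\le1/2}\bigl|\alpha(v'+\theta v'')+\beta v''\bigr|-O(\alpha^2+\beta^2)$.

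The key point is that $v'+\theta v''$ and $v''$ are linearly independent on $[-1/2,1/2]$. Neither vanishes identically ($v$ non-constant and $1$-periodic gives $v''\not\equiv0$, while $v'+\theta v''\equiv0$ would make $\theta v'(\theta)$ constant, hence $0$ at $\theta=0$, hence $v'\equiv0$). If $\mu(v'+\theta v'')+\nu v''\equiv0$ with $\mu\ne0$, then $\frac{d}{d\theta}\bigl[(\theta+\nu/\mu)v'(\theta)\bigr]\equiv0$, so $(\theta+\nu/\mu)v'(\theta)$ is constant; evaluating at $\theta=-\nu/\mu$ forces that constant to be $0$ and hence $v'\equiv0$, a contradiction. (In the Type~I cases one may instead just note $v'+\theta v''$ is even and $v''$ is odd.) Since two real-analytic functions that are linearly independent on $\mathbb R$ are linearly independent on $[-1/2,1/2]$, the seminorm $(\alpha,\beta)\mapsto\sup_{|\theta|\le1/2}|\alpha(v'+\theta v'')+\beta v''|$ is actually a norm on $\mathbb R^2$, hence bounded below by $c(v)(|\alpha|+|\beta|)$. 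Substituting back, $g(v,a,b)\ge c(v)(|\alpha|+|\beta|)-O(\alpha^2+\beta^2)\ge\tfrac{c(v)}{2}(|\alpha|+|\beta|)$ for $(\alpha,\beta)$ small.

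It remains to compare $|\alpha|+|\beta|$ with the displayed polynomials near each resonant pair. Near $(-1,0)$ one has $|a+1|+b(1-b)=|\alpha|+\beta(1-\beta)\le|\alpha|+|\beta|$ and, for the two-symmetry case, $|a^2-1|+b(1-b)|b-1/2|\le 3(|\alpha|+|\beta|)$; near $(1,1/2)$ one has $|a-1|+|b-1/2|=|\alpha|+|\beta|$ and, for the two-symmetry case, $|a^2-1|+b(1-b)|b-1/2|\le 3(|\alpha|+|\beta|)$; the polynomials are written in this ``periodic-friendly'' form precisely so that, to first order, they vanish exactly on the $b$-coordinates of the resonant pairs (and the occasional extra zero of a polynomial that is not a resonant pair of $g$, such as $(1,0)$ or $(-1,1/2)$ in the two-symmetry case, is harmless since $g$ is bounded below there). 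Shrinking $C(v)$ to cover both the near- and far-from-resonance regimes finishes the proof. The one genuinely non-routine ingredient is the identification of the first-order directions $v'+\theta v''$ and $v''$ and the verification of their linear independence; the compactness step and the comparison with the explicit polynomials are bookkeeping.
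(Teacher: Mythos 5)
Your argument is correct, and it is genuinely different from the paper's. The paper establishes the bound by first proving the elementary estimate $g(v,a,b)\ge C_1\min\{|a-1|,|a+1|\}$, then (after a split on the relative size of $|a\mp 1|$ and the $b$-polynomial) uses Lipschitz continuity of $g$ in $a$ to reduce to the one-parameter slice $a=\pm1$, and finally bounds $g(v,\mp1,b)$ from below via an $L^2$ lower bound and Plancherel: $g(v,-1,b)^2\ge\int_0^1|v'(\theta)-v'(b-\theta)|^2\,d\theta=\sum_n 4n^2|c_n|^2\sin^2(\pi nb)$, with $\gcd$ of the Fourier support used to locate the zeros and a quadratic vanishing rate at $b=0,1$. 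Your route avoids the Fourier computation entirely: you exploit compactness away from the (finite) resonance set in the full $(a,b)$-parameter and do a genuine two-variable first-order expansion near each resonant pair; the key new ingredient you introduce is the identification of $v'+\theta v''$ and $v''$ as the linear-response directions (together with the observation that the first-order coefficients coincide, up to a global sign, at both resonant pairs) and the verification that these two functions are linearly independent, which makes the sup-norm of the first-order term a genuine norm on $(\alpha,\beta)$. Your expansion and signs are correct in all three cases — I checked $a_*=-1,b_*=0$ using $v'$ even, $v''$ odd, and $a_*=1,b_*=1/2$ using $v'(\theta+1/2)=-v'(\theta)$ — and the linear-independence argument via $\frac{d}{d\theta}[(\theta+\nu/\mu)v'(\theta)]\equiv 0$ combined with $1$-periodicity of $v'$ is sound. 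The only small caveats, which don't invalidate anything: your error is really $O((|\alpha|+|\beta|)^2)$ uniformly over $|\theta|\le 1/2$ (same as $O(\alpha^2+\beta^2)$), and in Case 1 you should note that the resonant point $b=0$ is a torus point and so also governs the behaviour as $b\to 1^-$ (which your passage to $b\in\mathbb T$ handles). Both approaches are roughly the same length; the paper's Fourier computation is more explicit and directly produces the exact exponent $b^2(1-b)^2$ for the $L^2$ quantity, while yours is somewhat more conceptual and would generalize more readily to situations where a clean Plancherel identity is unavailable.
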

\begin{proof}
The inequality 
\beq
\label{eq_lower_a}
g(v,a,b)\ge C_1\min\{|a-1|,|a+1|\}
\eeq
is clearly satisfied in all three cases with $C_1=C_1(v)=\max_{\theta\in \T}|v'(\theta)|$ (since the first term in the derivative of \eqref{vsym_theta} attains its maximum for some $\theta\in [0,1]$). Let us also note that, in all three cases, $g(v,a,b)$ is Lipschitz in $a$ uniformly in $b$:
\beq
\label{eq_g_lipschitz}
|g(v,a_1,b)-g(v,a_2,b)|\le C_{\mathrm{Lip}}|a_1-a_2|,\quad C_{\mathrm{Lip}}=C_{\mathrm{Lip}}(v)>0.
\eeq
We now address each of the three cases separately.

{\noindent\it Case 1.} Due to continuity of $g$ and absence of Type II symmetry, for any $\varepsilon>0$, we have 
$$
g(v,a,b)\ge C(\varepsilon)>0,\quad \text{for}\quad -1+\varepsilon\le a\le 1,\,\, 0\le b\le 1.
$$ 
Hence, \eqref{eq_lower_a} can be replaced by $g(v,a,b)\ge C_1 |a+1|$, which implies \eqref{eq_lowerg_1} for $C_{\mathrm{Lip}}|a+1|\ge \frac12 b(1-b)$. Suppose now that $C_{\mathrm{Lip}} |a+1|<\frac12 b(1-b)$. Then, \eqref{eq_g_lipschitz} implies that it would be sufficient to establish \eqref{eq_lowerg_1} for $a=-1$, that is, to estimate $g(v,-1,b)$ from below. Let 
$$
v(\theta)=\sum\limits_{n\in \mathbb Z}c_n e^{2\pi i n \theta}.
$$
Then
$$
g(v,-1,b)^2\ge\int_0^1|v'(\theta)-v'(b-\theta)|^2\,d\theta=\sum\limits_{n\in \mathbb Z}4n^2|c_n|^2\sin^2(\pi nb).
$$
Since 1 is the smallest period of $v$, there is a finite index set $I\subset \Z$ with $\gcd(I)=1$ and $c_n\neq 0$ for $n\in I$, which implies
$$
g(v,-1,b)^2\ge \sum\limits_{n\in I}4n^2|c_n|^2\sin^2(\pi nb)\ge C b^2(b-1)^2,
$$
since the last expression can only vanish for $b=0$ or $b=1$, and in both cases admits a quadratic lower bound.

{\noindent\it Case 2. }Similarly to Case 1, \eqref{eq_lower_a} can be replaced by $g(v,a,b)\ge C_1 |a-1|$, which implies \eqref{eq_lowerg_2} for $C_{\mathrm{Lip}}|a+1|\ge \frac12 |b-1/2|$. Hence, we can assume that the opposite inequality holds, which allows to consider $a=1$ and obtain a similar Fourier estimate:
$$
g(v,1,b)^2\ge \sum\limits_{n\in \mathbb Z}4n^2|c_n|^2\cos^2(\pi nb)\ge \sum\limits_{n\in I}4n^2|c_n|^2\cos^2(\pi nb),
$$
where $I\subset \Z$ is an index set with $\gcd(I)=1$ and $c_n\neq 0$ for $n\in I$. Type II symmetry implies that all such $n$ must be odd, and hence the right hand side can only vanish for $b=1/2$, with any of the terms providing a lower bound $C(b-1/2)^2$.

{\noindent\it Case 3. }Similarly to Case 1 and Case 2, \eqref{eq_lower_a} immediately implies \eqref{eq_lowerg_3} for $C_{\mathrm{Lip}}|a^2-1|\ge \frac14 b(1-b)|b-1/2|$.
In case of the opposite inequality, one can replace $a$ by $1$ or $-1$ using \eqref{eq_g_lipschitz} and then apply the same Fourier lower bound from Case 2 or Case 1, respectively.
\end{proof}
One can also obtain upper bounds, which immediately follow from Lipschitz continuity of $v$, and combine both results into the following
\begin{cor}
\label{cor_upper}
Under the assumptions of Lemma $\ref{grad_lemma}$, we have the following two-sided bounds on $-\sqrt{2}\le \theta\le \sqrt{2}$ with $C_-(v),C_+(v)>0$
\begin{enumerate}
\item If $v$ has only Type I symmetry with $\theta_{\mathrm{sym}}=0$, then
$$
C_-(v)(|a+1|+b(1-b))\le g(v,a,b)\le C_+(v)(|a+1|+|b(b-1)|).
$$
\item If $v$ has only Type II symmetry, then
$$
C_-(v)(|a-1|+|b-1/2|)\le g(v,a,b)\le C_+(v)(|a-1|+|b-1/2|).
$$
\item If $v$ has both Type I (with $\theta_{\mathrm{sym}}=0$) and Type II symmetries, then
$$
C_-(v)(|a^2-1|+b(1-b)|b-1/2|)\le g(v,a,b)\le C_+(v)(|a^2-1|+b(1-b)|b-1/2|).
$$
\end{enumerate}
Moreover, each upper bound also holds for $|v(\theta)+v(a\theta+b)|$ uniformly in $\theta\in [0,1]$.
\end{cor}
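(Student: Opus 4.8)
The lower bounds in all three cases are exactly the content of Lemma~\ref{grad_lemma}, so only the upper bounds on $g(v,a,b)$ and the uniform-in-$\theta$ statement for $|v(\theta)+v(a\theta+b)|$ need a proof; the whole thing reduces to one-line Lipschitz estimates once \eqref{eq_g_lipschitz} is used to move $a$ to one of the values $\pm1$.

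I would begin with Case~1, where $v$ is odd and hence $v'$ is even. By \eqref{eq_g_lipschitz}, $g(v,a,b)\le g(v,-1,b)+C_{\mathrm{Lip}}|a+1|$, and
$$
g(v,-1,b)=\max_\theta\bigl|v'(\theta)-v'(-\theta+b)\bigr|=\max_\theta\bigl|v'(\theta)-v'(\theta-b)\bigr|
$$
since $v'$ is even. As $v$ is analytic, $v'$ is Lipschitz and $1$-periodic, so the last quantity is at most $\|v''\|_\infty\,\dist(b,\Z)\le 2\|v''\|_\infty\,b(1-b)$ on $[0,1]$, which gives $g(v,a,b)\le C_+(v)\bigl(|a+1|+b(1-b)\bigr)$ for all $a,b$ in range. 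Case~2 is the same: the relation $v(\theta+\tfrac12)=-v(\theta)$ gives $v'(\theta+\tfrac12)=-v'(\theta)$, so reducing to $a=1$ one has $g(v,1,b)=\max_\theta|v'(\theta)+v'(\theta+b)|=\max_\theta|v'(\theta+b)-v'(\theta+\tfrac12)|\le\|v''\|_\infty|b-\tfrac12|$. In Case~3 both reductions are available, and I would combine three sub-estimates: the Case~2 bound for $a$ near $1$, the Case~1 bound for $a$ near $-1$, and the trivial bound $g(v,a,b)\le 2\|v'\|_\infty$ when $a$ is bounded away from $\{-1,1\}$ — on that last region the claimed right-hand side is bounded below by a positive constant depending only on $v$, so the three estimates patch together.

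The ``moreover'' clause follows by running the same argument with $v$ in place of $v'$: at the degenerate parameter pairs ($(-1,0)$ in Case~1, $(1,\tfrac12)$ in Case~2, both in Case~3) the function $v(\theta)+v(a\theta+b)$ vanishes identically in $\theta$, by oddness (resp.\ $\tfrac12$-antiperiodicity) together with $1$-periodicity. Concretely, in Case~1 one writes
$$
v(\theta)+v(a\theta+b)=\bigl(v(\theta)-v(\theta-b)\bigr)+\bigl(v(a\theta+b)-v(-\theta+b)\bigr),
$$
which is correct because $v(\theta-b)+v(-\theta+b)=0$ by oddness, and then bounds the two differences by $2\|v'\|_\infty\,b(1-b)$ and $\|v'\|_\infty|\theta|\,|a+1|\le\|v'\|_\infty|a+1|$ for $\theta\in[0,1]$, respectively; Cases~2 and~3 are handled identically (with $|v(\theta)+v(a\theta+b)|\le 2\|v\|_\infty$ covering the intermediate range of $a$ in Case~3), giving the stated bounds uniformly in $\theta\in[0,1]$.

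The argument is entirely routine apart from these elementary Lipschitz estimates; the one point needing care is the case analysis in Case~3, where the three sub-estimates — for $a$ near $1$, for $a$ near $-1$, and for $a$ away from $\{-1,1\}$ — must be patched into the single two-sided bound claimed, keeping track simultaneously of the distance of $a$ to $\{-1,1\}$ and of the two distinct degenerate values $b=0$ and $b=\tfrac12$.
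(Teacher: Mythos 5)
Your Cases 1 and 2 are correct and fill in exactly the Lipschitz details that the paper leaves implicit (``immediately follow from Lipschitz continuity''), and the decomposition you give for the ``moreover'' clause in Case 1, $v(\theta)+v(a\theta+b)=(v(\theta)-v(\theta-b))+(v(a\theta+b)-v(-\theta+b))$, is a clean way to make that uniform-in-$\theta$ bound precise. The problem is Case 3, and it is a genuine gap rather than a detail you can postpone.

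You say you would ``combine three sub-estimates'' and that ``the three estimates patch together,'' but they do not patch to the expression claimed in the corollary. Near $a=-1$ your Case~1 sub-estimate gives $g(v,a,b)\lesssim |a+1|+b(1-b)$, and near $a=-1$ the corollary's right-hand side is $|a^2-1|+b(1-b)|b-1/2|$; since $|b-1/2|\le 1/2$ the latter can be \emph{smaller} than the former, and indeed it vanishes at $(a,b)=(-1,1/2)$ while $b(1-b)$ does not. So the Case~1 estimate does not dominate the Case~3 target there. Concretely the Case~3 upper bound as written is false: take $v(\theta)=\sin(2\pi\theta)$, which has both symmetries. Then
$$
g(v,-1,1/2)=\max_{|\theta|\le 1/2}\bigl|v'(\theta)-v'(-\theta+\tfrac12)\bigr|=\max_{|\theta|\le 1/2}|4\pi\cos(2\pi\theta)|=4\pi,
$$
whereas $|a^2-1|+b(1-b)|b-1/2|=0$ at $(a,b)=(-1,1/2)$, so the claimed inequality reads $4\pi\le 0$. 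The same computation defeats the ``moreover'' clause: $v(\theta)+v(-\theta+1/2)=2\sin(2\pi\theta)$ has sup norm $2$, not $0$. (The lower bound of Lemma~\ref{grad_lemma} survives because $\ge 0$ is vacuous there; only the upper bound breaks.) The expression the corollary uses also vanishes at $(1,0)$ and $(1,1)$, which are likewise non-degenerate.

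What your three sub-estimates actually prove is the correct bound
$$
g(v,a,b)\le C_+(v)\min\bigl\{|a+1|+b(1-b),\ |a-1|+|b-1/2|\bigr\},
$$
whose zero set is exactly the set of degenerate parameters $(1,1/2),(-1,0),(-1,1)$ and which therefore \emph{can} be paired with the lower bound of Lemma~\ref{grad_lemma} in the downstream argument of Theorem~\ref{initial_ldt}. You should state and prove this (or an equivalent) rather than the formula $|a^2-1|+b(1-b)|b-1/2|$, and you should flag, when attempting the patching, that the displayed Case~3 formula cannot be correct — the sentence ``the one point needing care is the case analysis in Case~3'' acknowledges the danger but then does not actually carry out the check that would have revealed the failure.
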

\begin{remark}
\label{nosym_remark}
In a separable asymmetric case, similar arguments imply that $g(v,a,b)\ge \varepsilon(v)>0$ uniformly in $a,b$.
\end{remark}
\begin{remark}
In case of Type I symmetry with $\theta_{\mathrm{sym}}\neq 0$, the second term in Cases 1 and 3 will be different due to the shift of $b$. As mentioned earlier, we will always assume $\theta_{\mathrm{sym}}=0$.
\end{remark}
\begin{prop}
\label{harnack}
Let $f$ be an analytic function in the disk $|z|\le 2e$, $|f(z)|\le M$ for $|z|\le 2e$, and $f(0)=1$. Let $D=\{z\colon|z|\le 1,\,|f(z)|\le \lambda\}$. Then $D$ can be covered by a union of disks of total diameter bounded by $C \exp\{\frac{\log\lambda}{\log M}\}$ $($where $C$ is an absolute constant$)$. In particular,
$$
|D\cap[-1,1]|_1\le C \exp\l\{2\frac{\log\lambda}{\log M}\r\}.
$$
\end{prop}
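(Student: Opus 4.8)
The plan is to factor out the zeros of $f$ lying near the unit disk, bound the remaining zero-free factor from below by a Harnack inequality, and then apply Cartan's lemma to the polynomial carrying those zeros.

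Since $|f(0)|=1$ we have $M\ge1$, and if $M=1$ then $f\equiv1$ and $D=\emptyset$ for $\lambda<1$, so assume $M>1$. First I would count the zeros $z_1,\dots,z_N$ of $f$ in $\{|z|\le2\}$: Jensen's formula on the circle $|z|=2e$, together with $f(0)=1$ and $|f|\le M$, gives $\sum_{|z_k|<2e}\log(2e/|z_k|)\le\log M$, and since each term with $|z_k|\le2$ is at least $\log e=1$, this forces $N\le\log M$. Next I would pass to the Blaschke product $B(z)=\prod_{k=1}^{N}\frac{2e(z-z_k)}{(2e)^2-\bar z_k z}$ of the disk $|z|<2e$, so that $|B|\equiv1$ on $|z|=2e$, and $g:=f/B$ is analytic on $|z|\le2e$, zero-free on $|z|\le2$, satisfies $|g|\le M$ on $|z|\le2e$ by the maximum principle (as $|g|=|f|$ on $|z|=2e$), and has $|g(0)|=\prod_{k=1}^{N}(2e/|z_k|)\ge e^{N}\ge1$.

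Since $g$ is zero-free on $|z|\le2$, the function $u=\log(M/|g|)$ is nonnegative and harmonic there, so Harnack's inequality on that disk gives $u(z)\le3\,u(0)\le3\log M$ for $|z|\le1$, i.e.\ $|g(z)|\ge M^{-2}$ on $|z|\le1$. Hence for $z\in D$ we obtain $|B(z)|=|f(z)|/|g(z)|\le\lambda M^{2}$; and since $(2e)^2-2\le|(2e)^2-\bar z_k z|\le(2e)^2+2$ on $|z|\le1$, the monic polynomial $P(z)=\prod_{k=1}^{N}(z-z_k)$ satisfies $|P(z)|\le b^{N}\lambda M^{2}$ on $D$, with $b=\big((2e)^2+2\big)/(2e)$. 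I would then invoke Cartan's lemma: writing $b^{N}\lambda M^{2}=(H/e)^{N}$ with $H=eb(\lambda M^{2})^{1/N}$, the set $\{|P|\le b^{N}\lambda M^{2}\}$, hence $D$, is covered by at most $N$ disks of total diameter at most $4H=4eb(\lambda M^{2})^{1/N}$.

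It remains to replace the exponent $1/N$ by $1/\log M$, which is the only place that requires care. If $N=0$ then $|g|=|f|\ge M^{-2}$ on $|z|\le1$, so $D=\emptyset$ whenever $\lambda<M^{-2}$; and if $\lambda\ge M^{-2}$ (for any $N$), then $\exp\{\log\lambda/\log M\}\ge e^{-2}$ while the trivial covering of $D$ by one disk of diameter $2$ already gives the claim, provided $C\ge2e^{2}$. If $N\ge1$ and $\lambda<M^{-2}$, then $\log\lambda+2\log M<0$, so using $1\le N\le\log M$,
\[
(\lambda M^{2})^{1/N}=\exp\!\Big\{\tfrac{\log\lambda+2\log M}{N}\Big\}\le\exp\!\Big\{\tfrac{\log\lambda+2\log M}{\log M}\Big\}=e^{2}\exp\!\Big\{\tfrac{\log\lambda}{\log M}\Big\},
\]
which yields a covering of $D$ by disks of total diameter $\le4e^{3}b\,\exp\{\log\lambda/\log M\}$; thus $C=\max(4e^{3}b,2e^{2})$ works. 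The $L^{1}$ estimate then follows by intersecting each covering disk with the real axis, since that intersection has length at most the diameter of the disk. The analytic ingredients (Jensen, Harnack, Cartan) are entirely classical; the only genuinely delicate point is the bookkeeping of absolute constants, in particular having to trade the Cartan estimate against the trivial bound when the number of zeros $N$ is small.
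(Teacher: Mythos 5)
Your derivation is correct and is in substance the proof behind the reference the paper cites (Theorem~4, Section~11.3 of Levin): Jensen's formula gives $N\le\log M$ zeros in $|z|\le 2$, the Blaschke product of $|z|<2e$ removes them, Harnack applied to the nonnegative harmonic function $\log(M/|g|)$ on $|z|\le 2$ yields $|g|\ge M^{-2}$ on the unit disk, and Cartan's lemma controls the monic factor; the case split at $\lambda\ge M^{-2}$ to trade $1/N$ for $1/\log M$ is exactly the bookkeeping required, and you have carried it out carefully (including the degenerate cases $M=1$, $N=0$). Since the paper only cites Levin rather than giving a proof, your reconstruction supplies what is needed.

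One caveat you should flag: your argument establishes $|D\cap[-1,1]|_1\le C\exp\{\log\lambda/\log M\}$, whereas the proposition as printed asserts $|D\cap[-1,1]|_1\le C\exp\{2\log\lambda/\log M\}$. For $\lambda<1<M$ the latter is the \emph{smaller}, hence strictly stronger, quantity, so it cannot be deduced ``in particular'' from the covering estimate: intersecting the Cartan disks with the real axis only reproduces the total diameter, and one cannot shrink the inner radius below $1$ (nor enlarge the outer radius beyond $2e$, which is fixed in the hypothesis) to raise the coefficient of $\log\lambda/\log M$ above $\log(2e/2)=1$. Levin's $H(\eta)=2+\log\frac{3e}{2\eta}$ likewise optimizes to coefficient $1$ here. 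The factor $2$ in the exponent therefore looks like a typo in the paper's statement (there is a matching algebraic slip in the line after \eqref{eq_theta_set_estimate}), and in the only place it is used, Theorem~\ref{initial_ldt}, replacing $2$ by $1$ only renames the constants $c_2(v)$, $C_3(v)$; so your version of the bound is both what the method delivers and what the application actually needs.
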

As earlier, $|\cdot|_1$ denotes the 1D Lebesgue measure. For the proof, see Theorem 4 in Section 11.3 of \cite{L}.
\begin{theorem}
\label{initial_ldt}
Suppose $v$ is symmetric and extends to a bounded analytic function in the strip $|\im z|\le 20$. Then, for any line segment $L\subset \mathbb R^2$ of unit length, and any $E\in [-3,3]\setminus\{0\}$, $\delta>0$, we have
\beq
\label{sym_bound}
|\{(\theta_1,\theta_2)\in L\colon |v(\theta_1)+v(\theta_2)-E|\le \delta\}|_1\le c_1(v)\delta^{-\frac{c_2(v)}{\log |c_3(v)E|}},
\eeq
where $|\cdot|_1$ denotes the one-dimensional Lebesgue measure and $c_1(v),c_2(v)>0$, $0<c_3(v)\le 1/6$ depend only on $v$.
\end{theorem}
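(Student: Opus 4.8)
The plan is to reduce \eqref{sym_bound} to a sublevel-set estimate for the one–variable analytic function $\phi(\theta)=v(\theta)+v(a\theta+b)$ with $|a|\le 1$, and then feed it into the Cartan-type bound of Proposition \ref{harnack}. The exponent $-c_2(v)/\log|c_3(v)E|$ will appear because the correct normalization constant for Proposition \ref{harnack} turns out to have size comparable (up to $v$-dependent constants) to $|E|^2$, while the ambient sup-norm is bounded in terms of $v$ only, so the relevant $\log M$ has size of order $\log(1/|E|)$. First I would reduce to one variable: parametrize the unit segment $L$ by $\theta=\theta_1$ if its direction $\gamma$ satisfies $|\cos\gamma|\ge|\sin\gamma|$, and by $\theta=\theta_2$ otherwise. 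In either case the arc-length element along $L$ is at most $\sqrt2\,d\theta$, the restriction of $v(\theta_1)+v(\theta_2)$ to $L$ equals $\phi(\theta)=v(\theta)+v(a\theta+b)$ with $-1\le a\le1$, and, using $1$-periodicity of $v$ and an integer shift, one may take $0\le b<1$ and the parameter interval $I\subset[-1,1]$. Since $|a|\le1$, both $v(z)$ and $v(az+b)$ are analytic on the strip $|\im z|\le 20$, so $\phi-E$ is analytic there with $|\phi(z)-E|\le M_1(v):=2\sup_{|\im z|\le 20}|v|+3$. Writing $\kappa:=g(v,a,b)=\max_{|\theta|\le1/2}|\phi'(\theta)|$, it suffices (absorbing $\sqrt2$) to bound $|\{\theta\in I:|\phi(\theta)-E|\le\delta\}|_1$.

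\emph{Degenerate case.} Since $v$ is symmetric, Corollary \ref{cor_upper} (its last sentence, extended from $[0,1]$ to $[-1,1]$ at the cost of a $v$-dependent factor) together with the lower bounds of Lemma \ref{grad_lemma} gives $\sup_{\theta\in I}|\phi(\theta)|\le C_\ast(v)\kappa$. Hence if $\kappa\le|E|/(2C_\ast(v))$, then $|\phi(\theta)-E|\ge|E|/2$ on all of $I$, so the sublevel set is empty when $\delta<|E|/2$ and has measure $\le\sqrt2$ otherwise, which is dominated by the right-hand side of \eqref{sym_bound} once $c_1(v)$ is taken large (using $|E|\le 3$ to bound the right-hand side below). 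This is the one place where $E\ne 0$ is used: it keeps $\phi-E$ away from $0$ exactly on the configurations $(a,b)$ for which $\phi$ may vanish identically, namely those identified in Lemma \ref{symmetries}.

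\emph{Generic case and Cartan.} Now assume $\kappa>|E|/(2C_\ast(v))$. Pick $\theta^\ast\in[-1/2,1/2]$ with $|\phi'(\theta^\ast)|=\kappa$. A Cauchy estimate inside the strip bounds $|\phi''|$ by some $M_2(v)$ on $\{|z-\theta^\ast|\le1\}$, so with $r_\ast:=\min(\kappa/(2M_2(v)),\tfrac14)$ the values of $\phi'$ stay in the disk of radius $\kappa/2$ about $\phi'(\theta^\ast)$ on $\{|z-\theta^\ast|\le r_\ast\}$; integrating along $[\theta^\ast,\theta^\ast+r_\ast]$ gives $|\phi(\theta^\ast+r_\ast)-\phi(\theta^\ast)|\ge r_\ast\kappa/3$. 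Hence one of the real points $z_0\in\{\theta^\ast,\theta^\ast+r_\ast\}\subset[-1,1]$ satisfies $|\phi(z_0)-E|\ge m_1:=r_\ast\kappa/6$, and here $m_1\ge c(v)|E|^2$ (since $\kappa\ge|E|/(2C_\ast(v))$ and $|E|\le3$); one also checks $m_1<M_1(v)$, so $\log(M_1(v)/m_1)>0$. Applying Proposition \ref{harnack} to $f(z):=(\phi(z_0+3z)-E)/(\phi(z_0)-E)$, which is analytic on $|z|\le2e$, equals $1$ at the origin, and is bounded there by $M:=M_1(v)/m_1$, and then undoing the rescaling (using $[z_0-3,z_0+3]\supset[-1,1]\supset I$) yields, for every $\delta>0$,
\[
|\{\theta\in I:|\phi(\theta)-E|\le\delta\}|_1\ \le\ 3C\exp\!\Big(\frac{2\log(\delta/m_1)}{\log(M_1(v)/m_1)}\Big)\ \le\ C'\,\delta^{\,2/\log(M_1(v)/m_1)},
\]
the last step because $m_1^{-2/\log(M_1(v)/m_1)}\le e^2$. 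Since $m_1\ge c(v)|E|^2$ we have $\log(M_1(v)/m_1)\le 2\log(1/|E|)+b(v)$, so the exponent is $\ge\big(\log(1/|E|)+b(v)/2\big)^{-1}$. Taking $c_2(v):=1$ and $c_3(v):=\min\{1/6,\,e^{-b(v)/2}\}$ (so that $\log(1/|c_3(v)E|)\ge\log(1/|E|)+b(v)/2$ for all $0<|E|\le3$), the displayed bound is $\le C'\delta^{-1/\log|c_3(v)E|}$ for $0<\delta<1$; for $\delta\ge1$ the measure is $\le\sqrt2\le c_1(v)\delta^{-1/\log|c_3(v)E|}$. Choosing $c_1(v)$ larger than $C'$ and than the constant from the degenerate case completes the proof of \eqref{sym_bound}.

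\emph{Main obstacle.} The only nonroutine ingredient is the lower bound in the generic case: producing a real point $z_0$ near $I$ with $|\phi(z_0)-E|\ge c(v)|E|^2$ on a window whose size depends only on $v$ and $\kappa$. This rests essentially on Lemma \ref{grad_lemma}/Corollary \ref{cor_upper} coupling the size of $\phi$ with the size $\kappa$ of its derivative: when $\phi$ is uniformly tiny (hence $\kappa$ tiny) the energy $E$ alone keeps us safe (degenerate case), and when $\phi$ is not tiny, $\kappa\ge|E|/(2C_\ast(v))$ is already large enough that $\phi-E$ is forced to take a value of order $\kappa^2$, hence $\ge c(v)|E|^2$, on a fixed-size window, which is exactly the normalization Proposition \ref{harnack} requires. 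Everything else is bookkeeping of $v$-dependent constants so that a single triple $(c_1,c_2,c_3)$ works for all $E\in[-3,3]\setminus\{0\}$ and all $\delta>0$; this uses only the a priori bound $|I|\le1$ together with the positivity of the exponent in \eqref{sym_bound}.
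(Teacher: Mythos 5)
Your proof is correct, but it takes a genuinely different route from the paper's. Both arguments begin the same way: parametrize the unit segment by a single variable $\theta$, write the restricted potential as $\phi(\theta)=v(\theta)+v(a\theta+b)$, reduce to $\delta<|E|/2$, and observe (via Lemma~\ref{grad_lemma} and Corollary~\ref{cor_upper}) that when $\kappa=g(v,a,b)$ is small compared with $|E|$ the sublevel set is empty. Where they diverge is what gets fed into the Cartan estimate of Proposition~\ref{harnack}. The paper normalizes and applies Cartan to the \emph{derivative} $h=\phi'$ at a point $\theta_0$ with $|h(\theta_0)|\gtrsim|E|$, obtaining a bound on $|\{|h|<\eta\}|$; it then needs a separate nontrivial input --- Gabrielov's theorem, giving a uniform bound $M(v)$ on the number of monotonicity intervals of $\phi$ --- to convert the gradient bound into a sublevel-set bound via the $\eta^{-1}\delta + \eta^{-2/\log C_3|E|}$ balancing. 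You instead apply Cartan directly to $\phi-E$, and the price you pay for this shortcut is having to manufacture a real point $z_0$ where $|\phi(z_0)-E|\ge m_1\gtrsim|E|^2$; your second-derivative argument (Cauchy estimate on $\phi''$, then integration over a window of size $\sim\min(\kappa,1)$) does exactly that, using only $\kappa\gtrsim|E|$, which is the same input the paper needs. Your lower bound on the normalizer is $|E|^2$ rather than the paper's $|E|$, but since the dependence enters only logarithmically this is absorbed into $c_2,c_3$; you correctly track that $m_1^{-2/\log(M_1/m_1)}$ is bounded, so the final bound has the stated form with $c_2(v)=1$. The resulting argument is shorter and avoids the appeal to \cite{G} entirely. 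The one place worth a bit more care --- and you flag it --- is the comparison $\sup_I|\phi|\le C_\ast(v)\kappa$: this follows from the two-sided bounds in Corollary~\ref{cor_upper} once one accounts for the (bounded) range of $\theta$ by periodicity, so no gap, just a small bookkeeping point. You should also note explicitly that the Cartan conclusion is only nontrivial for $\delta<m_1$; for $m_1\le\delta<1$ the right side of \eqref{sym_bound} is already $\gtrsim C'e^{-2}$ (by the same estimate $m_1^{-2/\log(M_1/m_1)}\le e^2$), so the trivial bound $\sqrt2$ suffices there as well. With those small clarifications, your proof stands as a valid and slightly more elementary alternative.
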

\begin{proof}
We will assume that $L$ is described by $a,b$ as in \eqref{vsym_theta}, and the points of $L$ are parametrized by $\theta$ (which is either $\theta_1$ or $\theta_2$). 
Without loss of generality, we can also assume that $\delta\le |E|/2$, otherwise the bound can be obtained by choosing a sufficiently large $c_1(v)$. Finally, one only needs to consider $a$, $b$ satisfying
\beq
\label{eq_condition_ab}
g(v,a,b)\ge \frac{C_-(v)}{2C_+(v)}|E|,
\eeq
otherwise the set in the left hand side of \eqref{sym_bound} is empty. The function
$$
h(\theta)=v'(\theta)+av'(a\theta+b),
$$
satisfies $|h(\theta_0)|\ge \frac{C_-(v)}{2C_+(v)}|E|$ for some $\theta_0\in [-1/2,1/2]$. Also, $h$ extends to the strip $|\im z|\le 20$ and satisfies $|h(z)|\le h_{\max}(v)$ in that strip for some $h_{\max}(v)>0$. We can pick the bound $h_{\max}(v)$ in such a way that $h_{\max}(v)\ge |h(\theta_0)|$ uniformly in $a,b$, in the range considered. Lemma \ref{harnack} applied to the function
$$
h_1(\theta)=\frac{h(\theta-\theta_0)}{h(\theta_0)}.
$$
with $M=h_{\max}(v)/h(\theta_0)$ and $\lambda=\eta/h(\theta_0)$ yields the following bound
\beq
\label{eq_theta_set_estimate}
|\{\theta\in [-1/2,1/2]\colon |h(\theta)|<\eta\}|\le C \exp\l\{2\frac{\log\eta-\log|h(\theta_0)|}{\log h_{\max}(v)-\log|h(\theta_0)|}\r\}
\eeq
We have
$$
\exp\l\{2\frac{-\log|h(\theta_0)|}{\log h_{\max}(v)-\log|h(\theta_0)|}\r\}=\exp\l\{2-\frac{\log h_{\max}(v)}{\log h_{\max}(v)-\log|h(\theta_0)|}\r\}\le C_1(v),
$$
and
$$
0<\log h_{\max}(v)-\log|h(\theta_0)|\le \max\{\log 2,-\log C_2(v)|E|\}\le -\log C_3(v)|E|,
$$
where $C_3(v)=\min\{C_2(v),1/6\}$ (since we are only considering $|E|\le 3$). Using the last two estimates, \eqref{eq_theta_set_estimate} becomes
$$
|\{\theta\in [-1/2,1/2]\colon |h(\theta)|<\eta\}|\le C_4(v)\eta^{-\frac{2}{\log C_3(v)|E|}},\quad |E|\le 3,
$$
where the implicit dependence on $E$ of the left hand side is in the choice $a,b$ restricted by \eqref{eq_condition_ab}.

To estimate the set in the left hand side of \eqref{sym_bound}, note that the number of intervals of monotonicity of the function $\theta\mapsto v(\theta)+v(a\theta+b)$ is bounded by a constant $M(v)$ that depends only of $v$ (note that this fact is not trivial and is shown in \cite{G}). By considering the sets where $|h(\theta)|<\eta$ and $|h(\theta)|\ge \eta$, we arrive to
$$
|\text{l. h. s. of \eqref{sym_bound}}|\le C_5(v)(\eta^{-1}\delta+\eta^{-\frac{2}{\log C_3(v)|E|}})
$$
(where $M(v)$ is absorbed by $C_5(v)$). Balancing the powers leads to
$$
|\text{l. h. s. of \eqref{sym_bound}}|\le C_6(v)\delta^{\frac{2}{2-\log C_3(v)|E|}},
$$
which implies \eqref{sym_bound}.
\end{proof}
Following \cite{BGS}, define an {\it elementary region} $\Lambda\subset \Z^2$ as a difference of a rectangle and its translation over some non-zero lattice vector. Let $\mathcal{ER}(N)$ be the set of all elementary regions of diameter $N$; the diameter of $\Lambda$ is denoted by $\diam(\Lambda)$. For an elementary region $\Lambda\in \mathcal{ER}(N)$, define the {\it Green's function} of the operator $H(\theta_1,\theta_2)$ restricted to $\Lambda$ in the usual way,
$$
G_{\Lambda}(\theta_1,\theta_2,E)=(H_{\Lambda}(\theta_1,\theta_2)-E)^{-1}=(\l.\one_{\Lambda}(H(\theta_1,\theta_2)-E)\r|_{\ran \one_{\Lambda}})^{-1},
$$
where $\one_{\Lambda}$ is the indicator function of $\Lambda$ (that is, the restriction operator). For $\gamma>0$, define also the set of ``good'' phases,
\beq
\label{greengood0}
\g^{\gamma,b}(\Lambda,E)=\{\theta_1,\theta_2\subset \T^2\colon G_{\Lambda}(\theta_1,\theta_2,E)\text{ satisfies \eqref{greengood1},\eqref{greengood2}}\},\text{ that is,}
\eeq
\beq
\label{greengood1}
\|G_{\Lambda}(\theta_1,\theta_2,E)\|< \lambda^{-1}e^{\sigma(\Lambda)^b}
\eeq
\beq
\label{greengood2}
|G_{\Lambda}(\theta_1,\theta_2,E)(n_1,n_2)|< 
e^{-\gamma|n_1-n_2|}\,\text{ for all }\,n_1,n_2\in \Lambda,\, |n_1-n_2|\ge \frac14 \sigma(\Lambda).
\eeq
Here the norm in \eqref{greengood1} can be chosen to be, for example, Hilbert--Schmidt norm (however, the choice of a particular matrix norm does not matter as one can essentially ignore factors of $N^C$). The complementary set of ``bad'' phases is
\beq
\label{greenbad}
\b^{\gamma,b}(\Lambda,E)=\T^2\setminus \g^{\gamma,b}(\Lambda,E).
\eeq
\begin{theorem}
\label{initial_sym}

Suppose $v$ satisfies the assumptions of Theorem $\ref{initial_ldt}$. Fix $b,\mu\in (0,1)$. Then, for all unit line segments $L\subset \R^2$, we have
$$
|\b^{\gamma,b}(\Lambda,E)\cap L|_1\le \exp\{-\sigma(\Lambda)^{b(1-\mu)}\},
$$
uniformly in $\Lambda\in \ER(N)$ with $\gamma=\frac12 \log\lambda$, assuming 
\begin{multline}
\label{eq_initial_lambdaNE}
\lambda\ge \lambda_0(N,b,v,\mu),\quad  N\ge N_0(b,v,\mu),\\  |E-U_j|\ge \frac{\lambda}{\exp\{\sigma(\Lambda)^{b \mu}\}},\quad j=1,\ldots,N_{\mathrm{int}}.
\end{multline}
\end{theorem}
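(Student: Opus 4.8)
\emph{Plan of the proof.} The argument is the standard large-disorder estimate at the base scale. First I would reduce the Green's function bounds \eqref{greengood1}--\eqref{greengood2} to a statement about resonances of the diagonal part of $H_\Lambda$, and then control those resonances on the segment $L$ by Theorem \ref{initial_ldt}. Write $H_\Lambda(\theta_1,\theta_2)=\Delta_\Lambda+D_\Lambda$, where $\Delta_\Lambda=\one_\Lambda\Delta\one_\Lambda$ satisfies $\|\Delta_\Lambda\|\le C_0$ (one may take $C_0=4$) and $D_\Lambda$ is diagonal with entries $D_\Lambda(n_1,n_2)=\lambda\bigl(v(n_1\omega+\theta_1)+v(n_2\omega+\theta_2)\bigr)+U(n_1,n_2)$. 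If $|D_\Lambda(n)-E|\ge\rho$ for all $n\in\Lambda$, then for $\rho\ge 2C_0$ the Neumann series $G_\Lambda=(D_\Lambda-E)^{-1}\sum_{k\ge0}\bigl(-\Delta_\Lambda(D_\Lambda-E)^{-1}\bigr)^k$ converges and gives $\|G_\Lambda\|\le(\rho-C_0)^{-1}$ and $|G_\Lambda(n,m)|\le 2\rho^{-1}(C_0/\rho)^{|n-m|}$; hence \eqref{greengood1}--\eqref{greengood2} hold with $\gamma=\tfrac12\log\lambda$ provided $\rho\ge\lambda e^{-\sigma(\Lambda)^b}+C_0$ and $\rho\ge C_0\lambda^{1/2}$. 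I would take $\lambda_0(N,b,v,\mu)$ large enough that $\lambda\ge\lambda_0$ forces $\lambda e^{-\sigma(\Lambda)^b}\ge C_0\lambda^{1/2}$, set $\rho:=\lambda e^{-\sigma(\Lambda)^b}+C_0$ and $\delta:=\rho/\lambda\le 2e^{-\sigma(\Lambda)^b}$, and conclude that $\b^{\gamma,b}(\Lambda,E)$ is contained in the union over $n\in\Lambda$ of the sets $\{\,|v(n_1\omega+\theta_1)+v(n_2\omega+\theta_2)-\tilde E_{j(n)}|<\delta\,\}$, where $\tilde E_j:=(E-U_j)/\lambda$ runs through the finitely many values of $(E-U)/\lambda$.

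Now by the last line of \eqref{eq_initial_lambdaNE} every $\tilde E_j$ has $|\tilde E_j|\ge e^{-\sigma(\Lambda)^{b\mu}}>0$, and any index with $|\tilde E_{j(n)}|\ge 3$ contributes the empty set since $|v(n_1\omega+\theta_1)+v(n_2\omega+\theta_2)|\le 2<3-\delta$. For the remaining indices $\tilde E_{j(n)}\in(-3,3)\setminus\{0\}$, and the translation $(\theta_1,\theta_2)\mapsto(n_1\omega+\theta_1,n_2\omega+\theta_2)$ sends the unit segment $L$ to another unit segment while preserving one-dimensional measure, so Theorem \ref{initial_ldt} with energy $\tilde E_{j(n)}$ and level $\delta$ bounds the measure of the $n$-th resonance set on $L$ by $c_1(v)\delta^{-c_2(v)/\log|c_3(v)\tilde E_{j(n)}|}$, which is $\le c_1(v)\delta^{c_2(v)/(\log(1/c_3(v))+\sigma(\Lambda)^{b\mu})}$ since $e^{-\sigma(\Lambda)^{b\mu}}\le|\tilde E_{j(n)}|\le 3$ and $c_3(v)\le 1/6$. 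Summing over the at most $CN^2$ lattice sites of $\Lambda$ and using $\delta\le 2e^{-\sigma(\Lambda)^b}$,
$$
|\b^{\gamma,b}(\Lambda,E)\cap L|_1\le CN^2 c_1(v)\,2^{c_2(v)}\exp\Bigl\{-\frac{c_2(v)\,\sigma(\Lambda)^b}{\log(1/c_3(v))+\sigma(\Lambda)^{b\mu}}\Bigr\}.
$$
For $N\ge N_0(b,v,\mu)$ the exponent in the braces equals $(c_2(v)-o(1))\,\sigma(\Lambda)^{b(1-\mu)}$; since the proof of Theorem \ref{initial_ldt} in fact produces the exponent $2/\bigl(2-\log(C_3(v)|E|)\bigr)$, so that $c_2(v)$ may be taken $\ge 2>1$, this exceeds $\sigma(\Lambda)^{b(1-\mu)}$ by an amount of order $\sigma(\Lambda)^{b(1-\mu)}$, which absorbs $\log\bigl(CN^2 c_1(v)2^{c_2(v)}\bigr)$ once $N\ge N_0(b,v,\mu)$. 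This yields the desired bound $\exp\{-\sigma(\Lambda)^{b(1-\mu)}\}$.

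The steps above --- Neumann series and a union bound --- are routine; the difficulty is entirely in the constant bookkeeping. One has to take $\lambda_0$ at least exponential in $\sigma(\Lambda)^b$, precisely so that the resolvent-norm requirement $\rho\gtrsim\lambda e^{-\sigma(\Lambda)^b}$ (not the decay requirement $\rho\gtrsim\lambda^{1/2}$) controls the size of $\rho$; otherwise $\delta$ is only $\approx\lambda^{-1/2}$ and $\log(1/\delta)$ is too small to produce the required exponent. One must also keep the sharp exponent from Theorem \ref{initial_ldt} --- in particular the fact that its constant is essentially $2$, and in any case strictly greater than $1$ --- because the gain $\approx c_2(v)\,\sigma(\Lambda)^{b(1-\mu)}$ has to beat not just the target $\sigma(\Lambda)^{b(1-\mu)}$ but also the polynomial factor $N^2$ coming from the number of sites of $\Lambda$; a value $c_2(v)\le 1$ would be fatal here.
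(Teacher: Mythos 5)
Your proof follows essentially the same route as the paper's: a Neumann-series / resolvent expansion to reduce the Green's function bounds to the absence of resonances $|v(\theta_1+n_1\omega)+v(\theta_2+n_2\omega)-(E-U_j)/\lambda|\le\delta$ on $\Lambda$, then Theorem~\ref{initial_ldt} to control the set of $(\theta_1,\theta_2)\in L$ where a resonance occurs. Your choice $\rho=\lambda e^{-\sigma(\Lambda)^b}+C_0$ (so $\delta\approx e^{-\sigma(\Lambda)^b}$) is a clean variant of the paper's $\delta=\lambda^{-1/2}$ and scales correctly with $\lambda$ across the whole range $\lambda\ge\lambda_0$.

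The one point to correct is the claim that ``$c_2(v)$ may be taken $\ge 2>1$'' and the remark that the constant is ``in any case strictly greater than $1$''. Any constant $c_2(v)$ valid in the \emph{statement} of Theorem~\ref{initial_ldt} for all $|E|\in(0,3]$ must satisfy $c_2\le 2t/(2+t)$ with $t=-\log(c_3(v)|E|)$, and the infimum over this range (attained near $|E|=3$) is $2\log 2/(2+\log 2)<1$; so $c_2(v)<1$ in general, and certainly $c_2(v)<2$. What saves the argument --- and is what you are in effect using --- is to go back to the \emph{proof} of Theorem~\ref{initial_ldt} and use the exponent $\frac{2}{2-\log(C_3(v)|E'|)}$ itself. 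In the critical regime $|E'|=\lambda^{-1}(E-U_j)$ with $e^{-\sigma(\Lambda)^{b\mu}}\le|E'|$, the denominator $2-\log(C_3|E'|)$ equals $\sigma(\Lambda)^{b\mu}(1+o(1))$, so with $\delta\approx e^{-\sigma(\Lambda)^b}$ one gets $\delta^{\frac{2}{2-\log(C_3|E'|)}}\le\exp\{-(2-o(1))\sigma(\Lambda)^{b(1-\mu)}\}$; the effective constant $2-o(1)$ lies strictly between $1$ and $2$, and $>1$ is enough to absorb the polynomial factor $CN^2$ once $N\ge N_0(b,v,\mu)$. For $|E'|$ bounded away from $0$ the bound is far stronger ($\propto e^{-c\sigma(\Lambda)^b}$) and needs no care. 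The paper absorbs these constants by a different bookkeeping device (``choosing a slightly smaller $\mu$''), but the substance is identical; just restate your constant claim as ``the effective exponent constant approaches $2$ from below in the critical regime and exceeds $1$ for $N$ large'' rather than ``$c_2\ge2$''.
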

\begin{proof}
Suppose 
\begin{multline}
\label{v_level}
|v(\theta_1+n_1\omega)+v(\theta_2+n_2\omega)-(E-U_j)/\lambda|>\delta,\\ \text{for all}\,(n_1,n_2)\in \Lambda, \,\,j=1,\ldots,N_{\mathrm{int}}.
\end{multline}
Then, using the resolvent identity (where $\Delta_{\Lambda}$ and $U_{\Lambda}$ denote the restrictions of the corresponding operators)
\begin{multline*}
(H_{\Lambda}(\theta_1,\theta_2)-E)^{-1}=(\lambda V(\theta_1,\theta_2)-E-\Delta_{\Lambda}-U_{\Lambda})^{-1}\\
=\{I-(\lambda V_{\Lambda}(\theta_1,\theta_2)+U_{\Lambda}-E)^{-1}\Delta_{\Lambda}\}^{-1}(\lambda V_{\Lambda}(\theta_1,\theta_2)+U_{\Lambda}-E)^{-1},	
\end{multline*}
we can see (cf. \cite[Lemma 4.1]{BGS}, or by direct expansion of the resolvent)
\beq
\label{resol1}
|(H_{\Lambda}(\theta_1,\theta_2)-E)^{-1}(n_1,n_2)|\le (1-16\lambda^{-1}\delta^{-1})^{-1}(16\lambda^{-1}\delta^{-1})^{|n_1-n_2|+1},
\eeq
and
\beq
\label{resol2}
\|(H_{\Lambda}(\theta_1,\theta_2)-E)^{-1}\|\le 8\delta^{-1}\lambda^{-1},
\eeq
where we used the fact that $\|\Delta_{\Lambda}\|\le \|\Delta\|=4$.
Take $\lambda\ge e^{\sigma(\Lambda)^{b}}$, $\delta=\lambda^{-1/2}$, and suppose that $(\theta_1,\theta_2)$ satisfy \eqref{v_level}. Since $\lambda^{-1}\delta^{-1}= \lambda^{-1/2}$, \eqref{resol1} and \eqref{resol2} imply
$$
|(H_{\Lambda}(\theta_1,\theta_2)-E)^{-1}(n_1,n_2)|\le e^{-\frac12 \log(\lambda)|n_1-n_2|},
$$
$$
\|(H_{\Lambda}(\theta_1,\theta_2)-E)^{-1}\|\le C \lambda^{-1}e^{\frac12 \sigma(\Lambda)^{b}},
$$
which implies \eqref{greengood1},\eqref{greengood2}.
It remains to estimate the measure of the set of $\theta$ for which \eqref{v_level} fails. Let $L \subset \R^2$ be a line segment. Apply Theorem \ref{initial_ldt}:
\begin{multline*}
\l|\l\{(\theta_1,\theta_2)\in L\colon \eqref{v_level} \text{ fails}\r\}\r|_1\le c_1(v)N_{\mathrm{int}}|\Lambda|\exp\l\{-\frac{c_2(v)\log\delta}{\log |c_3(v)E/\lambda|}\r\}\\
\le c_1(v)N_{\mathrm{int}}|\Lambda|\exp\l\{\frac{c_2(v)\log\delta}{\sigma(\Lambda)^{b\mu}}\r\}=c_1(v)N_{\mathrm{int}}|\Lambda|\exp\l\{-\frac12 c_2(v)\sigma(\Lambda)^{b(1-\mu)}\r\}.
\end{multline*}
The constants $c_1(v)$ and $-\frac12 c_2(v)$, as well as the factor $N_{\mathrm{int}}|\Lambda|$, can be absorbed into the exponent, if one chooses a slightly smaller $\mu$ in the beginning.
\end{proof}
\begin{remark}
The parameter $E$ in Theorem \ref{initial_ldt} corresponds to $\lambda^{-1}(E-U_j)$ in Theorem \ref{initial_sym}.
\end{remark}
\section{Some preliminaries}
{\it Convention regarding the constants. }For simplicity of the language, we will often use the following construction: the constant $C$ in $N^C$ will always mean some absolute constant, and the constant does not have to be the same in all claims. For example, a claim ``Suppose $A(N^C)$. Then $B(N^C)$'' will mean the following: for every $C_1$ there exists $C_2$ such that $A(N^{C_1})$ implies $B(N^{C_2})$. Usually, the exact dependence of $C_2$ on $C_1$ will not be important.

We formulate two standard covering lemmas \cite[Lemma 2.2, Lemma 2.4]{BGS} with slight changes in notation that does not affect the proofs.
\begin{prop}
\label{bgs22}
Suppose $\Lambda\subset \Z^2$ is an arbitrary set with the following property: for every $m\in \Z^2$, there is a subset $W(m)\subset \Lambda$ with $m\in W(m)$, $\diam W(m)\le N$, and the Green's function $G_{W(m)}(E)$ satisfying for some $t,N,A>0$
$$
\|G_{W(m)}(E)\|<A,
$$
$$
|G_{W(m)}(E)(m,n)|\le e^{-tN}, \quad \text{for all}\quad n\in \partial_{\ast}W(m),
$$
where $\partial_{\ast}W(m)$ is the internal boundary of $W(m)$ relative to $\Lambda$, that is,
$$
\partial_{\ast}W(m)=\{n\in W(m)\colon \text{ there exists }k\in \Lambda\setminus W(m) \text{ with }|k-n|=1\}.
$$
Then, assuming $4N^2e^{-tN}\le \frac12$, we have
$$
\|G_{\Lambda}(E)\|\le 2N^2 A.
$$
\end{prop}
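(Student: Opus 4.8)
The plan is to expand the global Green's function in terms of the local ones supplied by the hypothesis, iterate, and sum a geometric series whose convergence is guaranteed precisely by the assumption $4N^2e^{-tN}\le\frac12$. This produces a uniform pointwise bound and an off-diagonal exponential decay for the matrix entries of $G_\Lambda(E)$, from which the norm estimate follows by Schur's test. One may assume $E\notin\spec H_\Lambda$ (otherwise there is nothing to prove); then $G_\Lambda(E)$ is a bounded operator, so in particular $\sup_{m,n}|G_\Lambda(E)(m,n)|<\infty$, which is the only a priori input the iteration needs.

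First I would fix $n_0\in\Lambda$ and an arbitrary $m\in\Lambda$. The column $G_\Lambda(\cdot,n_0)$ solves $(H_\Lambda-E)G_\Lambda(\cdot,n_0)=\delta_{n_0}$; restricting this equation to $W(m)$, applying $G_{W(m)}(E)$, and using that $W(m)$ is coupled to $\Lambda\setminus W(m)$ only through the nearest-neighbour part of $\Delta$ across $\partial_{\ast}W(m)$, one obtains
\beq
G_\Lambda(m,n_0)=\one_{n_0\in W(m)}\,G_{W(m)}(m,n_0)-\sum_{p\in\partial_{\ast}W(m)}\ \sum_{\substack{k\in\Lambda\setminus W(m)\\ |k-p|=1}}G_{W(m)}(m,p)\,G_\Lambda(k,n_0).
\eeq
Using $\|G_{W(m)}\|<A$ (hence $|G_{W(m)}(m,n_0)|\le A$), $|G_{W(m)}(m,p)|\le e^{-tN}$ for $p\in\partial_{\ast}W(m)$, the bound $|\partial_{\ast}W(m)|\le|W(m)|\le N^2$, that each $p$ has at most four neighbours, and $\diam W(m)\le N$, this gives
\beq
|G_\Lambda(m,n_0)|\le A\,\one_{|m-n_0|\le N}+4N^2e^{-tN}\max_{|m'-m|\le N+1}|G_\Lambda(m',n_0)|.
\eeq
Taking the supremum over $m$ and invoking $4N^2e^{-tN}\le\frac12$ collapses this to $|G_\Lambda(m,n_0)|\le 2A$ for all $m,n_0$. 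For the off-diagonal decay I would iterate the displayed inequality: while the running base point stays at distance $>N$ from $n_0$ the first term vanishes, and each iteration replaces the base point by one within distance $N+1$ of it while multiplying the bound by $4N^2e^{-tN}\le\frac12$; after roughly $|m-n_0|/N$ iterations one lands on the bounded term, which yields $|G_\Lambda(m,n_0)|\le 2A\,e^{-c\max(|m-n_0|-N,\,0)}$ uniformly in $m,n_0$, with $c=(\log 2)/(N+1)>0$.

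These two estimates are symmetric in $m$ and $n_0$, so they bound every row sum and every column sum of $G_\Lambda(E)$: $\sum_{n_0}|G_\Lambda(m,n_0)|$ is at most $2A$ times the number of lattice points within distance $N$ of $m$ — of order $N^2$ — plus a geometrically convergent tail of lower order, and likewise for $\sum_m|G_\Lambda(m,n_0)|$; Schur's test then gives $\|G_\Lambda(E)\|\le 2N^2A$. I do not expect a conceptual obstacle here — this is the standard coupling (``paving'') estimate, and the hypothesis $4N^2e^{-tN}\le\frac12$ is used exactly in the two places where it is needed, to close the supremum estimate and to make the iterated resolvent expansion converge. The only delicate point is bookkeeping: one must verify that the branching in the iterated expansion is controlled by precisely the quantity $4N^2e^{-tN}$ appearing in the hypothesis, and keep the lattice-point counts tight enough that the final numerical constant comes out as stated.
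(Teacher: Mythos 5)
Your approach is correct in spirit and relies on the same single engine that the cited lemma in \cite{BGS} does: the geometric resolvent identity coupling $W(m)$ to $\Lambda\setminus W(m)$ across $\partial_*W(m)$, with the hypothesis $4N^2e^{-tN}\le\frac12$ being precisely what makes the resulting self-referential estimate close. The identity you display and the counting $4\cdot|\partial_*W(m)|\le 4N^2$ for the boundary pairs are right. However, your route is heavier than needed and, as written, does not land on the stated constant. You iterate the identity to full depth to extract exponential off-diagonal decay $|G_\Lambda(m,n_0)|\lesssim A\,e^{-c(|m-n_0|-N)_+}$ with $c\sim(\log 2)/N$, and then sum for Schur's test; but the ``bulk'' term $\sum_{n_0\in W(m)}|G_\Lambda(m,n_0)|\le 2A\,|W(m)|$ together with a geometrically decaying tail with rate $\sim 1/N$ gives a row sum of order $CN^2A$ with $C$ noticeably larger than $2$, so the bookkeeping you flag as ``the only delicate point'' is in fact a genuine gap if one insists on the constant $2N^2$. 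The shorter and sharper argument is to apply the geometric resolvent identity exactly once inside the row sum and close self-consistently: setting $S=\sup_m\sum_{n_0}|G_\Lambda(m,n_0)|$ one gets $S\le |W(m)|\,A+4N^2e^{-tN}\,S\le N^2A+\frac12 S$, hence $S\le 2N^2A$, and the column sums obey the same bound by symmetry of $G_\Lambda$, so Schur's test finishes with no off-diagonal analysis at all. (The off-diagonal decay you derive is genuinely useful, but it belongs to the next covering lemma, Proposition \ref{bgs24}, not this one.) One more small point: ``otherwise there is nothing to prove'' is not quite right — that $E\notin\spec H_\Lambda$ is part of what must be established. It does follow from the same identity (apply it to a putative eigenvector $\psi$ to get $\|\psi\|_\infty\le\frac12\|\psi\|_\infty$), or by running the estimate at $E+i\varepsilon$ and letting $\varepsilon\to 0$, but it should be said.
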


\begin{prop}
\label{bgs24}
Suppose $M,N$ are positive integers such that for some $0<\tau<1$
$$
N^{\tau}\le M\le 2N^{\tau}.
$$
Let $\Lambda_0\in \ER(N)$ be an elementary region of size $N$ with the property that for all $\Lambda\subset \Lambda_0$, $\Lambda\in \ER(L)$ with $M\le L\le N$, the Green's function $G_{\Lambda}(E)$ satisfies
\beq
\label{bgs28}
\|G_{\Lambda}(E)\|\le e^{L^b}
\eeq
for some fixed $0<b<1$. We say that $\Lambda \in \ER(L)$, $\Lambda\subset\Lambda_0$ is good if, in addition to \eqref{bgs28}, we have the off-diagonal decay, that is,
$$
|G_{\Lambda}(m,n)|\le e^{-\gamma|m-n|}\quad \text{ for all }m,n\in \Lambda,\,\,|m-n|>\frac14 L,
$$
where $\gamma>0$ is fixed. Otherwise, $\Lambda$ is called bad. Assume that any family of pairwise disjoint bad $M'$-regions in $\Lambda_0$ with $M+1\le M'\le 2M+1$, has at most $N^b$ regions in it. Then, under these assumptions, we have
$$
|G_{\Lambda_0}(m,n)|\le e^{-\gamma'|m-n|}\,\quad \text{ for all }m,n\in \Lambda_0,\,\,|m-n|>\frac14 N,
$$
where $\gamma'=\gamma-N^{-\delta}$ and $\delta=\delta(b,\tau)>0$, provided $N\ge N_0(b,\tau,\gamma)$.
\end{prop}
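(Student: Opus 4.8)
The norm bound needs no work: the hypothesis applied to $\Lambda=\Lambda_0$ (with $L=N$) already gives $\|G_{\Lambda_0}(E)\|\le e^{N^b}$, so only the off-diagonal decay has to be proved, and the plan is the standard resolvent iteration over a paving of $\Lambda_0$ by boxes of size $\sim M$, as in \cite[Lemma~2.4]{BGS}. Call a point $x\in\Lambda_0$ \emph{regular} if there is a good elementary region $W(x)\subseteq\Lambda_0$ with $\diam W(x)\in[M+1,2M+1]$ and $\dist\big(x,\partial_{\ast}W(x)\big)>\tfrac14\diam W(x)$ (a roughly centered box around $x$), and \emph{singular} otherwise. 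Away from $\partial\Lambda_0$ such a roughly centered box always exists, so a singular $x$ is one around which \emph{every} such box is a bad region; points near $\partial\Lambda_0$ are dealt with as in \cite{BGS}, using that $\Lambda_0$ is itself an elementary region. Applying a Vitali covering argument to the family of bad elementary regions of size in $[M+1,2M+1]$ arising as roughly centered boxes around singular points, extract a maximal pairwise disjoint subfamily $\Lambda_1,\dots,\Lambda_s$: by the hypothesis $s\le N^b$, and the singular set is contained in $\mathcal D:=\bigcup_{i\le s}\Lambda_i^{+}$, where $\Lambda_i^{+}$ is a fixed bounded enlargement of $\Lambda_i$. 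Thus $\mathcal D$ is the union of $s\le N^b$ \emph{obstacles} $\Lambda_i^{+}$, each of diameter $O(M)$ and each sitting inside an elementary region of size $\le N$ in $\Lambda_0$.

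Fix $m,n\in\Lambda_0$ with $|m-n|>\tfrac14N$ and expand $G_{\Lambda_0}(m,n)$ by iterating the resolvent identity along paths $m=x_0,x_1,\dots$. At a regular $x_j\notin\mathcal D$ we use its good box $W(x_j)$: the inequality $\dist(x_j,\partial_{\ast}W(x_j))>\tfrac14\diam W(x_j)$ makes the off-diagonal bound in the definition of a good region applicable to every boundary term, so the step replaces $G_{\Lambda_0}$ at $x_j$ by a sum of $\le C\diam W(x_j)$ terms, each carrying a weight $|G_{W(x_j)}(x_j,x_j')|\le e^{-\gamma|x_j-x_j'|}$ with $|x_j-x_j'|\ge|x_j-x_{j+1}|-1\ge\tfrac14M-1$. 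At a singular $x_j\in\mathcal D$ we ``jump past'' the obstacle containing $x_j$ by one application of the resolvent identity with an elementary region of size $\le N$ containing that obstacle and a buffer --- legitimate by the norm bound \eqref{bgs28} --- at the cost of a factor $\le CM\,e^{CM^b}$. The iteration stops once the current point lies within $2M$ of $n$, the remaining Green's function being bounded by $\|G_{\Lambda_0}\|\le e^{N^b}$. The resulting sum over paths is convergent: for $\gamma=\tfrac12\log\lambda$ and $\diam W(x_j)\ge M$ the per-step weight beats the branching with exponential room, a small fraction of which is spent to absorb the $\log M$-per-step branching loss into the final decay rate. Organizing the expansion so that each obstacle is crossed only $O(1)$ times --- possible because the $\le N^b$ obstacles are essentially disjoint and a single resolvent step carries the source across an obstacle --- the total contribution of the singular part is at most $e^{O(N^bM^b)}$.

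Collecting factors, $-\log|G_{\Lambda_0}(m,n)|\ge\gamma\sum_{\text{regular steps}}|x_j-x_j'|-O(N^bM^b)-O\big(M^{-1}|m-n|\log M\big)-N^b$; since every step has length $\gtrsim M$, the regular steps carry all but $O(N^bM)$ of the total displacement $\ge|m-n|-2M$, and with $M\asymp N^\tau$, $|m-n|>\tfrac14N$ and $N\ge N_0(b,\tau,\gamma)$ a short computation gives $-\log|G_{\Lambda_0}(m,n)|\ge(\gamma-CN^{-\delta})|m-n|$ for a suitable $\delta=\delta(b,\tau)>0$, which is the claimed estimate (the constant $C$ being absorbed by slightly decreasing $\delta$). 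The main obstacle is the middle stage: converting the hypothesis about pairwise disjoint bad regions into control of the singular set via Vitali, and --- more delicately --- organizing the resolvent expansion so that the loss charged to the singular part stays $e^{O(N^bM^b)}$, i.e.\ so that each obstacle is paid for only once and not every time the expansion wanders back near it; the precise dependence of $\delta$ on $b,\tau$ is fixed by balancing this obstacle loss and the branching loss against the gain $\gamma\cdot(\text{total displacement})$. The geometric bookkeeping for elementary regions meeting $\partial\Lambda_0$ is routine but fiddly, and the remaining estimates are the standard resolvent computation of \cite{BGS}.
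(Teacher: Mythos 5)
The paper itself gives no proof of this proposition: it explicitly states that Propositions \ref{bgs22} and \ref{bgs24} are ``two standard covering lemmas [Lemma 2.2, Lemma 2.4]{BGS} with slight changes in notation that does not affect the proofs,'' so there is no in-paper argument to compare against. Your sketch is a reasonable reconstruction of the cited lemma: the Vitali extraction of disjoint bad boxes, the regular/singular dichotomy via roughly centered good $M'$-boxes, and the resolvent iteration with good-box decay alternating with norm-bound crossings of obstacles is indeed the right overall shape.

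That said, the accounting you give for the singular part looks too crude to prove the proposition as stated (and as applied in the paper). You bound the displacement carried by singular steps by $O(N^b M)=O(N^{b+\tau})$ and the multiplicative cost by $e^{O(N^b M^b)}=e^{O(N^{b(1+\tau)})}$; for the conclusion $\gamma'=\gamma-N^{-\delta}$ to survive with $\delta>0$, both of these would have to be $o(N)$ and $e^{o(N)}$ respectively, which forces roughly $b+\tau<1$. But the proposition is stated for all $0<b,\tau<1$, and the paper applies it in Corollary \ref{green_main} with $M_1=[N^{1/2}]$ (so $\tau\approx 1/2$) and $b>3/4+3\delta_{\mathrm{dio}}+3\rho$, i.e.\ in a regime where $b+\tau>1$. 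So the bookkeeping ``$\le N^b$ disjoint bad boxes $\Rightarrow$ $\le N^b$ singular steps of length $O(M)$ and cost $e^{O(M^b)}$ each'' does not close; the actual argument in \cite{BGS} has to exploit the two-dimensional geometry (that the bad set, having $\le N^b$ disjoint pieces of size $\sim M$, occupies area $\lesssim N^bM^2$, and the resolvent expansion from $m$ to $n$ can be organized so that the cumulative distance spent inside enlargements of the bad set is much smaller than $|m-n|$ --- something like $\lesssim N^{(b+2\tau)/2}$ rather than $N^{b+\tau}$) before the balance works for the relevant range of exponents. You explicitly flag the ``organizing the expansion so each obstacle is paid for once'' step as the delicate one, and you are right --- but the issue is not only re-visiting obstacles, it is that the naive per-obstacle cost and displacement bounds are already too weak; the proof needs a more geometric handle on how much of the path can be singular. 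This is the main gap to fill.
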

\begin{remark}
While the result of Proposition \ref{bgs24}	is proved for a fixed $\tau$, one can choose $\delta$ and $N_0$ uniformly to serve an interval $\tau\in [\tau_0,\tau_1]\subset(0,1)$ (it is important that the endpoints are separated from $0$ and $1$).
\end{remark}

\subsection{Facts from real algebraic geometry} Similarly to all previously known higher-dimensional localization results, we will actively use real semi-algebraic sets. We assume that the reader is familiar with \cite[Section 7]{BGS} or \cite[Chapter 9]{B1}, which contains the definitions and relevant references. Below, we summarize some facts from real algebraic geometry that will be used during later constructions (starting from the definition). 
\begin{itemize}
	\item[(sa1)] A set $S\subset \R^d$ is called (closed) {\it semi-algebraic} if it is a finite union of sets, each of which is defined by finitely many polynomial inequalities or equalities of the form $Q\ge 0$, $Q\le 0$, or $Q=0$. We say that $\deg S\le sd$, if the set $S$ can be described using $s$ inequalities of the above type, with polynomials $Q$ of degree $\le d$ (and $\deg S$ is defined as the smallest possible value of $sd$ among all such representations). By a slight abuse of notation, bounds of the form $A\le (\deg S)^C$ would actually mean ``there exists an absolute constant $C$ such that the quantity is bounded by $A\le (2+\deg S)^C$'', in order to avoid considering separate case $\deg S=1$ or $\deg S=0$ (we will only be interested in such bounds for $\deg S\gg 1$).
	\item[(sa2)] If $S_1,S_2\subset \R^d$ are semi-algebraic, then $S_1\cup S_2$, $S_1\cap S_2$ are semi-algebraic, each of degree $\le (\deg S_1+1)(\deg S_2+1)$.
	\item[(sa3)] {\it Tarski--Seidenberg principle}: let $S\subset \R^{d+1}$ be semi-algebraic, and $p\colon \R^{d+1}\to \R^d$ be the standard projection. Then $p(S)$ is semi-algebraic, and $\deg p(S)\le (\deg S)^{C(d)}$. In general, $p(S)$ may not be closed, but it will always be closed if $S$ is compact. We will always apply this property to compact semi-algebraic sets.
	\item[(sa4)] Let $S\subset [0,1]^2$ be a semi-algebraic set of zero Lebesgue measure. For any $\varepsilon>0$, the $\varepsilon$-neighborhood of $S$ can be covered by $(\deg S)^{C(d)}\varepsilon^{-1}$ balls of radius $\varepsilon$ with centers on $S$. 
	\item[(sa5)] Let $S\subset [0,1]^d$ be semi-algebraic. Then $\partial S$ is also semi-algebraic, $\deg(\partial S)\le (\deg S)^{C(d)}$. Moreover, $\partial S$ is a union of $\le (\deg S)^{C(d)}$ semi-algebraic sets of dimensions $\le d-1$, whose Hausdorff measures of the corresponding dimensions are bounded by $(\deg S)^{C(d)}$. One can also obtain stronger statements from Proposition \ref{prop_triangulation}.
	\item[(sa6)] A semi-algebraic set $S\subset [0,1]^2$ of Lebesgue measure $\ge \varepsilon$ always contains a ball of radius $(\deg  S)^{-C}\varepsilon$. This property can also be applied on 2D surfaces. The following example will be important. Suppose $\mathcal C\subset [0,1]^2$ is a piecewise algebraic curve of length $L$ with $\le N$ smooth pieces of degree $\le B$ each. Let $I\subset [0,1]$ be an interval of length $L$. Suppose, $\mathcal C\times I\subset[0,1]^3$ is a union of $K$ semi-algebraic subsets of degree $\le B^{C}$. Then there is a curve segment $\mathcal C'\subset\mathcal C$ and a sub-interval $J\subset I$ such that $\mathcal C'\times J$ is contained in one of the $K$ above-mentioned subsets, where $|\mathcal C'|,\,|J|\ge L^{2} B^{-C}K^{-1}$.
	\item[(sa7)] A semi-algebraic set $S\subset \R^d$ has $\le (\deg S)^{C(d)}$ connected components, each of which is a semi-algebraic set of degree $\le (\deg S)^{C(d)}$. If $d=1$, then $S$ is a union of $\le (\deg S)^{C(d)}$ closed line segments.
 	\item[(sa8)] Instead of $\R^d$, one can consider semi-algebraic subsets of $\T^d$ or $\mathbb S^d$, using algebraic local charts (the coordinates induced from $\R^d$ on the torus, or stereographic projection on the sphere), with obvious modifications for previous properties. As an example: let $\mathcal A\subset{\R^{d+1}}$ be semi-algebraic. Fix $\eta>0$, and let $\Xi\subset \mathbb S^d$ be the set of directions in which $\mathcal A$ contains a line segment of length $\ge \eta$. Then $\Xi$ is semi-algebraic, and $\deg \Xi\le (\deg \mathcal A)^{C(d)}$. Similar constructions will be used during the course of the energy elimination part. In particular, an analogue of (sa6) holds for $\mathcal C\subset \mathbb S^2$.
 	\item[(sa9)] Let $S\subset [0,1]^d$ be a (topologically) connected semi-algebraic subset. Then $S$ is algebraically path connected. That is, for $p,q\in S$ there exists a curve $C\subset S$ connecting $p$ and $q$ such that $C$ is a union of $(\deg S)^{C(d)}$ smooth algebraic pieces of degrees $\le (\deg S)^{C(d)}$ and of total length $\le (\deg S)^{C(d)}$ (uniformly in $p,q$).
 	\item[(sa10)] Let $S\subset [0,1]^d$ be a semialgebraic subset, $\diam(S)\le 1$. During some proofs, we will use the following dyadic layer expansion of $S$. Fix $0<\varepsilon<1$. Define
 	$$
 	S(k)=\{x\in S\colon\dist(x,\partial S)=2^{-k}\},\quad k=1,2,3,\ldots,\lceil\log_2(\varepsilon^{-1}) \rceil.
 	$$
Then each $S(k)$ is a semi-algebraic subset (possibly empty),
$$
\deg S(k)\le (\deg S)^C,\quad \dim S_k\le d-1,
$$
and the following relations between neighborhoods are true (here $B_{\varepsilon}$ denotes the open $\varepsilon$-ball about the origin, so that $S+B_{\varepsilon}$ is the $\varepsilon$-neighborhood of $S$):
$$
\bigcup_{k}(S(k)+{B}_{2^{-(k+1)}})\subset S\subset \l(\bigcup_{k}(S(k)+{B}_{2^{-(k+1)}})\r)\cup (\partial S+B_{\varepsilon}).
$$
\end{itemize}
Properties (sa1)--(sa3), (sa4), (sa7) are well known, see references in \cite[Section 7]{BGS}. The remaining properties essentially follow from the quantitative triangulation theorem (Proposition \ref{prop_triangulation}). We include proofs of (sa5), (sa6), (sa8), and (sa9) in the Appendix. Property (sa10) follows from the fact that $\dist(x,\partial S)$ is a semi-algebraic function of $x$, and the level sets of the distance function are always of dimension $\le d-1$.

The following lemma is a modification of the cylindrical decomposition for semialgebraic sets, see, for example, \cite[Section 5.1]{RAG}, with added condition regarding convexity.
\begin{lemma}
\label{convexblocks}
Let $\A\subset [0,1]^2$ be a closed semialgebraic subset of degree $B$. Then $\A$ can be decomposed into a (not necessarily disjoint) union of sets of the following form:
$$
\{(\theta_1,\theta_2)\colon \theta_1\in I,\,\Theta_-(\theta_1)\le \theta_2\le \Theta_+(\theta_1)\}
$$
where $I\subset [0,1]$ is an interval, $\Theta_{\pm}\colon I\to [0,1]$ is a continuous algebraic function on $I$ of degree $\le B^C$, smooth on $I\setminus\partial I$, and either linear, or strictly convex, or strictly concave.
\end{lemma}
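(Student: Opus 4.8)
The plan is to start from the standard cylindrical algebraic decomposition (CAD) of $\A$ with respect to the $\theta_1$-axis, as in \cite[Section 5.1]{RAG}, and then refine each cylindrical cell so that the bounding graphs have definite convexity. The CAD produces a partition of the $\theta_1$-axis into finitely many open intervals and points, and over each such interval $I$ a finite stack of cells of the form $\{\theta_1\in I,\ \Theta_-(\theta_1)<\theta_2<\Theta_+(\theta_1)\}$ where $\Theta_\pm$ are continuous algebraic (branches of roots of the defining polynomials restricted to $I$), real-analytic on $I$, and the number of cells and their degrees are bounded by $B^C$. Since $\A$ is closed, taking closures of these cells and the separating graphs gives a covering of $\A$ by $\le B^C$ sets of the desired shape except that $\Theta_\pm$ need not be convex or concave. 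First I would record that, after the CAD step, it remains only to treat a single block $\{\theta_1\in I,\ \Theta_-(\theta_1)\le\theta_2\le\Theta_+(\theta_1)\}$ and show it can be further subdivided in the $\theta_1$-direction so that each of $\Theta_-,\Theta_+$ becomes linear, strictly convex, or strictly concave on each subinterval.

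Next I would handle a single algebraic function $\Theta\colon I\to[0,1]$ of degree $\le B^C$, real-analytic on the open interval. The key point is that $\Theta''$ is again an algebraic function (it is determined by the same defining polynomial via implicit differentiation, so its degree is still $\le B^C$), hence either $\Theta''\equiv 0$ on $I$ — in which case $\Theta$ is linear — or $\Theta''$ has only finitely many zeros in $I$, at most $B^C$ of them. Cutting $I$ at these zeros produces $\le B^C$ open subintervals on each of which $\Theta''$ has constant sign, i.e.\ $\Theta$ is strictly convex or strictly concave there; on the (finitely many) leftover points one gets degenerate intervals, which can be absorbed into neighbors or simply allowed since a single point is trivially all three. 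Doing this simultaneously for $\Theta_-$ and $\Theta_+$ (intersecting the two partitions of $I$) still gives only $\le B^C$ subintervals, and over each the block splits into a block of the required form. Multiplying the bounds over all the original CAD cells keeps the total count $\le B^C$, and all degrees stay $\le B^C$ by (sa2)/(sa3) together with the fact that differentiation does not increase semi-algebraic complexity beyond a polynomial factor.

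One technical subtlety I would spell out: the separating graphs $\theta_2=\Theta_\pm(\theta_1)$ themselves must also be covered, but a graph of a continuous algebraic function is the degenerate block with $\Theta_-=\Theta_+$, which is permitted by the statement, so after the convexity refinement above these are handled on the same footing. I would also note that continuity of $\Theta_\pm$ on the closed interval $I$ (including endpoints) follows because $\A$ is closed and bounded, so the branches extend continuously to $\partial I$; smoothness is only claimed on $I\setminus\partial I$, consistent with what CAD delivers.

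The main obstacle is not any single hard estimate but the bookkeeping of semi-algebraic degrees through three operations in sequence — CAD, implicit differentiation to control $\Theta''$, and the re-partitioning at the zeros of $\Theta''$ — while making sure the final count of blocks and all the degrees remain bounded by a single power $B^C$ with $C$ absolute (independent of $B$). This is routine given properties (sa1)--(sa3) and the standard complexity bounds for CAD, but it is the part that needs to be written carefully; the convexity dichotomy itself ($\Theta''\equiv 0$ or finitely many sign changes) is immediate once one observes $\Theta''$ is algebraic of controlled degree.
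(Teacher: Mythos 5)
Your proposal is correct, but it takes a genuinely different route from the paper. The paper starts from the quantitative triangulation theorem (Proposition~\ref{prop_triangulation}), reduces to $\A$ being an algebraic diffeomorphic image of a simplex, then draws a grid of horizontal and vertical lines through the ``singular'' boundary points (non-smoothness, isolated tangency to an axis, or change of convexity); inside each grid rectangle, it argues that each connected component of $\A$ is already a block between two graphs whose convexity is constant on at most $B^C$ subintervals. You instead bypass triangulation entirely: you invoke a cylindrical algebraic decomposition with respect to the $\theta_1$-axis (with the standard polynomial complexity bound in two variables), which hands you the blocks $\{\theta_1\in I,\ \Theta_-\le\theta_2\le\Theta_+\}$ directly, and then you impose convexity by cutting $I$ at the $\le B^C$ zeros of the algebraic functions $\Theta_\pm''$. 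Your route is more self-contained in one sense — it does not need the Yomdin--Gromov machinery, only CAD plus implicit differentiation — and it avoids having to justify that sections of a connected component by vertical lines are intervals, which CAD guarantees by construction; the paper's route, on the other hand, has the advantage of reusing Proposition~\ref{prop_triangulation}, which the authors prove in the Appendix anyway and exploit for several other semi-algebraic facts ((sa5), (sa6), (sa9)). All the individual claims you make check out: $\Theta''$ is algebraic of degree $\le B^C$ by implicit differentiation, an analytic algebraic function on an interval has either $\Theta''\equiv 0$ or at most $B^C$ zeros, and the degree/count bookkeeping through CAD and the refinement stays polynomial in $B$. The one technical point worth making fully explicit when writing this up is the continuity of $\Theta_\pm$ at $\partial I$: this comes from the CAD cell-closure structure (root branches extend continuously to the boundary of the base interval) rather than merely from $\A$ being closed, but your conclusion is correct.
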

\begin{proof}
Using quantitative triangulation theorem (Proposition \ref{prop_triangulation}), one can assume without loss of generality that $\A$ is an algebraic diffeomorphic image of a simplex. Call a point $a\in \partial \A$ singular, if one of the following is true: 
\begin{itemize}
\item $\partial A$ is not $C^{\infty}$-smooth at $a$.
\item The tangent vector to $\partial A$ at $a$ is parallel to one of the coordinate axes, and $a$ is an isolated point of $\partial \A$ with this property.
\item $\partial A$ is $C^{\infty}$-smooth at $a$, but changes convexity at that point.
\end{itemize}
The set of singular points is a finite semi-algebraic set of degree $\le B^C$, and hence contains at most $B^C$ points. Consider the grid formed by horizontal and vertical lines drawn at each singular point. This grid will split $[0,1]^2$ into $\le B^C$ rectangles. Without loss of generality, we can restrict ourselves to one of these rectangles $R$. Since $\A\cap R$ has $\le B^C$ connected components, we can further restrict to a single connected component $\A_1$ of $\A\cap R$. Clearly, the projection of $\A_1$ onto the $\theta_1$ axis is a closed interval $I$. For each $\theta_1\in I$ the set $\{\theta_2\colon (\theta_1,\theta_2)\in \A_1\}$ is also a closed interval, and hence $\A_1$ is the space between two graphs of functions. These two functions must be continuous, but do not have to be smooth due to presence of $\partial R$. However, one can split $I$ into $\le B^C$ intervals of smoothness, in which case they will be of constant convexity on each new interval.
\end{proof}

\section{Arithmetic conditions on frequency and thin semi-algebraic sets}
For a closed set $\A\subset [0,1]^2$, denote by $\eta(\A)$ the length of the longest line segment contained in $\A$ (which exists due to compactness).
\begin{theorem}
\label{arith}
Let $\omega\in [0,1)$ satisfy the following Diophantine condition:
\beq
\label{eq_diophantine}
\|k\omega\|\ge C_{\mathrm{dio}}|k|^{-1-\delta_{\mathrm{dio}}}, \quad 1\le |k|\le N,\quad C_{\mathrm{dio}},\delta_{\mathrm{dio}}>0.
\eeq
Let $\mathcal A\subset[0,1]^2$ be a semi-algebraic set with 
\beq
\label{eq_eta_assumption}
\eta(\A)<\min_{1\le |k|\le 2N}\|k\omega\|.
\eeq
Then
\beq
\label{arith_eq}
\#\{(k_1,k_2)\in \Z^2\colon (\{k_1\omega\},\{k_2\omega\})\in \mathcal A,\,|k_1|,|k_2|\le N\}\le (\deg \A)^C C'_{\omega} N^{3/4+3\delta_{\mathrm{dio}}}.
\eeq
\begin{proof}
We will refer to the set $\{(\{k_1\omega\},\{k_2\omega\}),\,|k_1|,|k_2|\le N\}$ as ``$N$-lattice points'', and the intersection with $\A$ as ``$N$-lattice points on $\A$'', or ``lattice points on $\A$''. Since we can essentially ignore a factor $(\deg \A)^C$, we may, by splitting $\A$ into $\le (\deg \A)^C$ components and possibly switching the variables, make the following reductions:
\begin{enumerate}
\item {\it $\A$ is a single piece described in Lemma $\ref{convexblocks}$. The functions $\Theta_{\pm}$ satisfy $|\Theta_+(\theta_1)-\Theta_-(\theta_1)|\le \eta(\A)$ for all $\theta_1\in I$, and the slope angle of the tangent vector to the graph of each function does not change more than by $\pi/8$ on $I$.} All these assumptions can be obtained from Proposition \ref{convexblocks} by introducing a $(\deg A)^C$ factor, using the assumptions on $\A$.
\item {\it Both functions $\Theta_+$ and $\Theta_-$ are strictly convex on $I$.} Let $R=I\times I'$ be the smallest rectangle that contains the graphs of $\Theta_{\pm}$. If the functions $\Theta_+$ and $\Theta_-$ have opposite convexity or one of them is linear, then one can easily check that $\A\cap R$ will contain a line segment of length at least $\frac13 \diam(R)$, and hence $\diam R\le 3\eta(\A)$. In particular, $R$ contains at most 9 lattice points, and hence one can absorb the contribution from all such pieces or $\A$ into the factor $(\deg \A)^C C_{\mathrm{dio}}'$. The remaining pieces are strictly convex or concave, and can be assumed to be convex without loss of generality.
\item {\it $\mathcal A$ can be covered by $(\deg \A)^C N^{3/4}$ balls of raduis $N^{-3/4}$, with covering multiplicity $\le 100$.} Follows from (sa4) applied to $\partial \A$, as any cover of $\partial \A$ by balls with centers on $\partial \A$ will also cover $\A$. The multiplicity part follows from Vitali covering lemma (one should first cover $\A$ by balls of size $\frac13 N^{-3/4}$).
\item {\it $\A$ cannot contain four distinct $N$-lattice points $p_1,p_2,p_3,p_4$ satisfying} 
$$
p_2-p_1=p_4-p_3.
$$
Indeed, in that case the convex hull of these points must lie above the graph of $\Theta_1$. Moreover, since tangent vector to the graph of $\Theta_2$ cannot change direction by more that $\pi/8$, the graph will not intersect at least one line segment $p_i p_j$ with $i\neq j$, and hence at least one such segment will be completely contained in $\A$, contradicting 
\eqref{eq_eta_assumption}.


\end{enumerate}

We say that $v\in \R^2$ is a {\it short lattice distance vector} if $v=p_1-p_2$ for two distinct $N$-lattice points $p_1$, $p_2$, and $|v|\le 2N^{-3/4}$. Clearly, both components of $v$ cannot exceed $2N^{-3/4}$ by absolute value which, due to Diophantine condition, implies that the number of different short lattice distance vectors is bounded by
\beq
\label{eq_bound_small_distance}
(C_1(\omega)N^{-3/4}N^{1+\delta_{\mathrm{dio}}})^2=C_1(\omega)^2N^{1/2+2\delta_{\mathrm{dio}}}.
\eeq
Let $B$ be one of the covering balls of $\A$ of radius $N^{-3/4}$, obtained in (3), and assume that $B$ contains two $N$-lattice points $p_1,p_2\in \A$. Property (4) implies that $\A$ cannot contain two points with difference vector $p_2-p_1$, unless one of these points coincides with $p_1$ or $p_2$. Using the multiplicity property from (3), we obtain that, for each short lattice difference vector $v$, {\it there are at most $200$ balls that contain some pair of lattice points on $\A$ with difference vector $v$}. Since the number of different short lattice distance vectors is bounded by \eqref{eq_bound_small_distance}, we can conclude that the number of covering balls that contain two or more lattice points on $\A$ is bounded by $200 C_1(\omega)^2 N^{1/2+2\delta_{\mathrm{dio}}}$.

Let us estimate the number of lattice points on $\A$ in each ball. Since $\A$ intersects each vertical line in a segment of length $\le \eta$, there is at most one $N$-lattice point on $\A$ on each vertical line. By counting possible values of the horizontal coordinate, one obtains (similarly to \eqref{eq_bound_small_distance}) the total bound of $C_1(\omega)N^{1/4+\delta_{\mathrm{dio}}}$ lattice points on $\A$ in each $N^{-3/4}$-ball.

It remains to combine the estimates. We split the balls obtained in (3) into two groups. The number of balls that contain at most one lattice point on $\A$ can be simply bounded by the total number of balls, which is $(\deg \A)^C N^{3/4}$.  
For the balls with two or more points, we estimated the number of such balls by $O(N^{1/2+2\delta_{\mathrm{dio}}})$, and the number of lattice points on $\A$ in each ball by $O(N^{1/4+\delta_{\mathrm{dio}}})$. This gives
$$
\#\text{l. h. s. of \eqref{arith_eq}}\le (\deg \A)^C (200 C_1(\omega)^3 N^{1/4+\delta_{\mathrm{dio}}}N^{1/2+2\delta_{\mathrm{dio}}}+N^{3/4}),
$$
which implies the required bound.
\end{proof}
\begin{remark}
\label{remark_large_N}
The constant $C_{\mathrm{dio}}'$ can be bounded by, say, $C (1+C_{\mathrm{dio}}^{-10})$, where $C$ is an absolute constant. One can also replace \eqref{eq_eta_assumption} by $\eta(\A)\le N^{-1-\delta_{\mathrm{dio}}-\varepsilon}$ and the right hand side of \eqref{arith_eq} by $(\deg \A)^{C}N^{3/4+3\delta_{\mathrm{dio}}+\varepsilon}$, by introducing an additional requirement $N\ge N_0(C_{\mathrm{dio}},\varepsilon)$.
\end{remark}
\end{theorem}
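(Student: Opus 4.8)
\emph{Strategy.} The plan is to normalize $\A$ to a thin convex sliver and then combine a count of short difference vectors between lattice points with a per-ball count. Call $(\{k_1\omega\},\{k_2\omega\})$ with $|k_1|,|k_2|\le N$ an \emph{$N$-lattice point}. First I would apply Lemma \ref{convexblocks} to split $\A$ into $\le(\deg\A)^C$ convex blocks $\{\theta_1\in I,\ \Theta_-(\theta_1)\le\theta_2\le\Theta_+(\theta_1)\}$, and — after possibly swapping coordinates and refining by a further $(\deg\A)^C$ factor — arrange on each block that $\Theta_+-\Theta_-\le\eta(\A)$ pointwise, that the tangent direction of each graph turns by less than $\pi/8$ over $I$, and that $\Theta_\pm$ are both strictly convex; blocks where one graph is linear or the two have opposite convexity sit inside a rectangle of diameter $\le3\eta(\A)$, hence carry $O(1)$ lattice points and are negligible. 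Finally, using property (sa4) on $\partial\A$ together with a Vitali argument, cover each block by $(\deg\A)^CN^{3/4}$ balls of radius $N^{-3/4}$ of multiplicity $\le100$.

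\emph{The no-parallelogram property.} The crucial geometric fact extracted from this normalization is that a normalized block cannot contain four distinct $N$-lattice points $p_1,p_2,p_3,p_4$ with $p_2-p_1=p_4-p_3$: such points span a parallelogram lying in the convex region above the graph of $\Theta_-$, and since the tangent direction of $\partial\A$ turns by less than $\pi/8$ across the block one checks that at least one of the segments joining two of the four points must also avoid the graph of $\Theta_+$, hence lie inside the thin sliver $\A$; but two distinct $N$-lattice points are at distance at least $\min_{1\le|k|\le2N}\|k\omega\|$, so that segment would be longer than $\eta(\A)$, contradicting \eqref{eq_eta_assumption} and the maximality defining $\eta(\A)$. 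This is precisely where strict convexity of $\Theta_\pm$ and the slope bound are used.

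\emph{Two counts and the conclusion.} Call $v\in\R^2$ a \emph{short lattice distance vector} if $v=p-q$ for distinct $N$-lattice points $p,q$ with $|v|\le2N^{-3/4}$. By \eqref{eq_diophantine}, two distinct numbers $\{k\omega\}$, $\{k'\omega\}$ with $|k|,|k'|\le N$ differ by at least $C_{\mathrm{dio}}(2N)^{-1-\delta_{\mathrm{dio}}}$, so an interval of length $4N^{-3/4}$ contains at most $C_1(\omega)N^{1/4+\delta_{\mathrm{dio}}}$ of them; applied to each coordinate this gives at most $C_1(\omega)^2N^{1/2+2\delta_{\mathrm{dio}}}$ short lattice distance vectors. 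Fix a ball of the cover containing two $N$-lattice points $p_1,p_2\in\A$ with $p_2-p_1=v$; by the no-parallelogram property, every pair of lattice points on $\A$ with difference $\pm v$ shares a point with $\{p_1,p_2\}$, so every ball containing a pair with difference $v$ contains $p_1$ or $p_2$, whence at most $200$ such balls — and summing over $v$, at most $200C_1(\omega)^2N^{1/2+2\delta_{\mathrm{dio}}}$ balls contain two or more lattice points on $\A$. Within a single ball, lattice points on $\A$ lie on distinct vertical lines (two on one line would be at $\theta_2$-distance $\ge\min_{1\le|k|\le2N}\|k\omega\|>\eta(\A)$, impossible since $\A$ meets a vertical line in a segment of length $\le\eta(\A)$), and the $\theta_1$-interval count bounds the number of such lines, hence of lattice points on $\A$ in the ball, by $C_1(\omega)N^{1/4+\delta_{\mathrm{dio}}}$. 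Adding the balls with at most one lattice point (at most $(\deg\A)^CN^{3/4}$, the total number of balls) to those with at least two (at most $200C_1(\omega)^3N^{3/4+3\delta_{\mathrm{dio}}}$) gives \eqref{arith_eq}, with $C'_{\omega}$ absorbing the $\omega$-dependent constants.

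\emph{Main obstacle.} Once \eqref{eq_diophantine} is available the arithmetic is routine; the delicate part is the geometric normalization — turning ``strict convexity plus $\pi/8$ slope control'' into the clean no-parallelogram statement, and checking that removing linear / opposite-convexity pieces and controlling the covering multiplicity genuinely cost only $(\deg\A)^C$ factors. That bookkeeping, rather than any single hard inequality, is where the effort goes.
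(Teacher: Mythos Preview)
Your proposal is correct and follows essentially the same approach as the paper's proof: the same reduction via Lemma~\ref{convexblocks} to strictly convex slivers with $\pi/8$ slope control, the same no-parallelogram observation, the same $N^{-3/4}$-ball cover with bounded multiplicity, and the same two-count combination (short difference vectors $\lesssim N^{1/2+2\delta_{\mathrm{dio}}}$, per-ball $\lesssim N^{1/4+\delta_{\mathrm{dio}}}$). Your write-up even makes explicit the length lower bound $\ge\min_{1\le|k|\le2N}\|k\omega\|$ for the segment produced in the no-parallelogram step, which the paper leaves implicit.
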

\section{Multi-scale induction step and estimate of the Green's function}
Let $H$ be the operator \eqref{h_def} for some fixed background potential $U$, satisfying the complexity bound \eqref{eq_lowcomp_def}. Recall the definitions of good and bad sets for Green's functions,
$$
\g^{\gamma,b}(\Lambda,E)=\{\theta\subset \T^2\colon G_{\Lambda}(\theta_1,\theta_2,E)\text{ satisfies \eqref{greengood1},\eqref{greengood2}}\},\text{ that is,}
$$
$$
\|G_{\Lambda}(\theta_1,\theta_2,E)\|\le \lambda^{-1}e^{\sigma(\Lambda)^b}
$$
$$
|G_{\Lambda}(\theta_1,\theta_2,E)(n_1,n_2)|\le 
e^{-\gamma|n_1-n_2|}\,\text{for all}\,n_1,n_2\in \Lambda,\, |n_1-n_2|\ge \frac14 \sigma(\Lambda).
$$
$$
\b^{\gamma,b}(\Lambda,E)=\T^2\setminus \g^{\gamma,b}(\Lambda,E).
$$
The sets $\g$ and $\b$ depend on $U$. It will be convenient to introduce the smaller set $\g_U^{\gamma,b}(\Lambda,E)$ as follows: we say that $(\theta_1,\theta_2)\in \g_U^{\gamma,b}(\Lambda,E)$ if $G_{\Lambda}(\theta_1,\theta_2,E)$ satisfies \eqref{greengood1}, \eqref{greengood2} {\it for all possible translations of $U$} by vectors from $\Z^2$. For $\Lambda\in \ER(M)$, there are at most $M^{C_{\mathrm{int}}}$ translations that give different $G_{\Lambda}$, which allows to retain the possibility of shifting $U$, while keeping the sets $\b$ exponentially small. Let also
$$
\b_U^{\gamma,b}(\Lambda,E)=\T^2\setminus \g_U^{\gamma,b}(\Lambda,E).
$$
We will often use the following elementary consequence of the resolvent identity.
\begin{prop}
\label{prop_resolvent}
Let $\Lambda \in \ER(N)$, $H_1=\Delta+\lambda V_1$, $H_2=\Delta+\lambda V_2$ are both Schr\"odinger operators on $\ell^2(\Lambda)$, $\|V_j\|_{\infty}\le 2$, $0<b<1$, $\lambda>1$, and
\beq
\label{resolvent_good1}
\|H_1^{-1}\|< \lambda^{-1}e^{\sigma(\Lambda)^b}
\eeq
\beq
\label{resolvent_good2}
|H_1^{-1}(n_1,n_2)|< 
e^{-\gamma|n_1-n_2|}\,\text{ for all }\,n_1,n_2\in \Lambda,\, |n_1-n_2|\ge \frac14 \sigma(\Lambda).
\eeq
Assume also that $\|V_1-V_2\|_{\infty}\le e^{-3\gamma_1 N}$, where $\gamma_1=\max\{\gamma,1\}$, and $N^b\le \frac{1}{10}\gamma_1 N$. Then
$$
\|H_2^{-1}\|< 2\lambda^{-1}e^{\sigma(\Lambda)^b}
$$
$$
|H_2^{-1}(n_1,n_2)|< 
2e^{-\gamma|n_1-n_2|}\,\text{ for all }\,n_1,n_2\in \Lambda,\, |n_1-n_2|\ge \frac14 \sigma(\Lambda).
$$
\end{prop}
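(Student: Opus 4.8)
The plan is the standard resolvent perturbation argument: the statement is a quantitative stability lemma, and everything hinges on the single estimate that $H_1^{-1}$ times the perturbation has operator norm $\ll 1$. Write $H_2 = H_1 + W$ with $W = \lambda(V_2 - V_1)$; since $W$ is a multiplication operator its operator norm equals $\lambda\|V_1-V_2\|_\infty \le \lambda e^{-3\gamma_1 N}$. As $H_1$ is invertible by \eqref{resolvent_good1}, we have the Neumann expansion $H_2^{-1} = \sum_{k \ge 0} (-H_1^{-1}W)^k H_1^{-1}$, valid once $\|H_1^{-1}W\| < 1$. By \eqref{resolvent_good1} (the bound there, in whatever matrix norm, dominates the operator norm), $\|H_1^{-1}W\| \le (\lambda^{-1}e^{\sigma(\Lambda)^b})(\lambda e^{-3\gamma_1 N}) = e^{\sigma(\Lambda)^b - 3\gamma_1 N}$; since $\sigma(\Lambda) \le N$ for $\Lambda \in \ER(N)$, the hypothesis $N^b \le \tfrac{1}{10}\gamma_1 N$ gives $\sigma(\Lambda)^b - 3\gamma_1 N \le -\tfrac{29}{10}\gamma_1 N$, so $\|H_1^{-1}W\| \le e^{-5\gamma_1 N/2}$, which is well below $\tfrac12$ since $\gamma_1 \ge 1$ and $N\ge 1$. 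In particular the series converges absolutely with $\sum_{k \ge 0}\|H_1^{-1}W\|^k < 2$.

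The norm bound follows at once: $\|H_2^{-1}\| \le \|H_1^{-1}\| \sum_{k\ge0}\|H_1^{-1}W\|^k < 2\lambda^{-1}e^{\sigma(\Lambda)^b}$. For the off-diagonal decay, fix $n_1,n_2 \in \Lambda$ with $|n_1 - n_2| \ge \tfrac14\sigma(\Lambda)$ and split the expansion into the $k=0$ term and the tail $\sum_{k\ge1}$. The $k=0$ term is $H_1^{-1}(n_1,n_2)$, which is $< e^{-\gamma|n_1-n_2|}$ by \eqref{resolvent_good2}. For each $k \ge 1$ bound the matrix entry of $(-H_1^{-1}W)^k H_1^{-1}$ by its operator norm $\|H_1^{-1}W\|^k\|H_1^{-1}\| \le e^{-5k\gamma_1 N/2}\lambda^{-1}e^{\sigma(\Lambda)^b}$; summing the geometric series gives a tail at most $2e^{-5\gamma_1 N/2}\lambda^{-1}e^{\sigma(\Lambda)^b} \le 2e^{\sigma(\Lambda)^b - 5\gamma_1 N/2} \le 2e^{-12\gamma_1 N/5}$, using $\lambda > 1$ and $\sigma(\Lambda)^b \le N^b \le \tfrac{1}{10}\gamma_1 N$. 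Since $|n_1 - n_2| \le \diam\Lambda = N$ and $\gamma \le \gamma_1$, we have $e^{-\gamma|n_1-n_2|} \ge e^{-\gamma_1 N}$, and $2e^{-12\gamma_1 N/5} < e^{-\gamma_1 N}$ because $e^{7\gamma_1 N/5} \ge e^{7/5} > 2$. Hence the tail is $< e^{-\gamma|n_1-n_2|}$, and adding the two contributions yields $|H_2^{-1}(n_1,n_2)| < 2e^{-\gamma|n_1-n_2|}$.

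There is no serious obstacle — the result is exactly a stability statement and the mechanism is forced. The only point requiring care is the bookkeeping of exponents: one needs $\sigma(\Lambda)^b$ to be negligible simultaneously against the gain $3\gamma_1 N$ coming from $\|V_1-V_2\|_\infty \le e^{-3\gamma_1 N}$ and against the \emph{worst-case} off-diagonal decay $e^{-\gamma|n_1-n_2|}$, which can be as weak as $e^{-\gamma_1 N}$ when $|n_1-n_2|$ is comparable to $N$. The hypothesis $N^b \le \tfrac{1}{10}\gamma_1 N$ is tailored precisely so that both comparisons hold with room to spare, the two factors of $2$ in the conclusion absorbing the geometric-series constants; using $\gamma_1 = \max\{\gamma,1\}$ rather than $\gamma$ is what keeps all these estimates uniform when $\gamma$ is small.
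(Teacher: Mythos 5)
Your proposal is correct and takes essentially the same route as the paper's proof: expand $H_2^{-1}$ via the resolvent identity / Neumann series around $H_1^{-1}$, note that $\|H_1^{-1}W\| \le e^{\sigma(\Lambda)^b - 3\gamma_1 N}$ is tiny under the stated hypotheses, and absorb the tail of the series into the factor $2$ using $|n_1-n_2|\le N$ and $\gamma\le\gamma_1$. The only cosmetic difference is that you split off the $k=0$ term explicitly, whereas the paper bounds $\|H_2^{-1}-H_1^{-1}\|$ directly and then adds it to the inherited estimates on $H_1^{-1}$; the content is identical.
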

\begin{proof}
The resolvent identity implies
$$
H_2^{-1}=H_1^{-1}+\lambda H_1^{-1}(V_1-V_2)H_2^{-1},
$$
or
$$
H_2^{-1}=H_1^{-1}(1-\lambda H_1^{-1}(V_1-V_2))^{-1}.
$$
Using Neumann series, we have
$$
\|H_2^{-1}-H_1^{-1}\|\le \lambda^{-1} e^{N^b}\l(e^{N^b-3\gamma_1N}+e^{2(N^b-3\gamma_1N)}+\ldots\r)\le 2 \lambda^{-1} e^{-2\gamma_1 N},
$$
from which both estimates follow.
\end{proof}
The following is a refined version of the claims of \cite[Remark 3.2, Remark 3.4, Lemma 4.2 and Remark 4.3]{BGS} which state that, essentially, that one can treat sets of parameters $E,\omega,\theta_1,\theta_2$ for which the Green's function $G_{\Lambda}(E,\theta_1,\theta_2)$ is good/bad, as semi-algebraic sets of degree $\le N^C$, where $\Lambda\in \ER(N)$.
\begin{lemma}
\label{semialg_reduction}
Let $\Lambda\in \ER(N)$ be an elementary region. Define
$$
S_{\Lambda}^{\gamma,b}=\{(E,\omega,\theta_1,\theta_2)\colon G_{\Lambda}(E,\theta_1,\theta_2)\text{ does not satisfy \eqref{greengood1} or \eqref{greengood2}}\}.
$$
Then there exists a semialgebraic subset $\A_{\Lambda}^{\gamma,b}\subset \R\times [0,1]^3$, $\deg \A_{\Lambda}^{\gamma,b}\le N^C$, such that
$$
\A_{\Lambda}^{\gamma,b}+B_{e^{-10 \gamma_1 N}}\subset S_{\Lambda}^{\gamma,b},
$$
and for every $(E,\omega,\theta_1,\theta_2)\in \R\times [0,1]\times [0,1]^2\setminus \A_{\Lambda}^{\gamma,b}$ we have
\beq
\label{greengood1_weaker}
\|G_{\Lambda}(\theta_1,\theta_2,E)\|< 8\lambda^{-1}e^{\sigma(\Lambda)^b}
\eeq
\beq
\label{greengood2_weaker}
|G_{\Lambda}(\theta_1,\theta_2,E)(n_1,n_2)|< 
8 e^{-\gamma|n_1-n_2|}\,\text{ for all }\,n_1,n_2\in \Lambda,\, |n_1-n_2|\ge \frac14 \sigma(\Lambda).
\eeq
The same is true if considers the sets in variables $\omega,\theta_1,\theta_2$ with fixed $E$, or with fixed $\omega$, or both.
\end{lemma}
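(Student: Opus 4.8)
The only obstruction to $S_{\Lambda}^{\gamma,b}$ being semi-algebraic is that the entries of $G_{\Lambda}=(H_{\Lambda}(\theta_1,\theta_2)-E)^{-1}$ are not rational functions of $(E,\omega,\theta_1,\theta_2)$: through the diagonal potential they involve $v(\theta_i+n\omega)$, and $v$ is merely analytic. The plan is to replace $v$ by a polynomial approximant of controlled degree. Using analyticity of $v$ (in a strip) one fixes a polynomial $\widetilde v$ with $\|\widetilde v-v\|_{L^\infty(I)}\le\delta_0$ and $\deg\widetilde v\le C(v)\log(1/\delta_0)$, where $I$ contains the range of the relevant arguments (after reducing modulo $1$ one may take $I=[0,1]$, at the cost of partitioning the cube $[0,1]^3$ in the variables $(\omega,\theta_1,\theta_2)$ by the $\le N^C$ affine hyperplanes $\theta_i+n\omega\in\Z$ into $\le N^C$ cells, on each of which $\theta_i+n\omega$ is replaced by $\theta_i+n\omega-k$ for a fixed integer $k$). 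We choose $\delta_0$ so small that $\lambda\delta_0\ll e^{-2\gamma_1 N-\sigma(\Lambda)^b}$; then $\log(1/\delta_0)\le C\gamma_1 N$, so $\deg\widetilde v$, and after the determinant computation below $\deg\A_{\Lambda}^{\gamma,b}$ as well, is bounded by a polynomial in $N$ (with $v,\gamma,\lambda$-dependent exponents and coefficients), in particular by $N^C$ for $N\ge N_0(v,\gamma,\lambda)$. Let $\widetilde H_{\Lambda}$ be obtained from $H_{\Lambda}$ by replacing $v$ with $\widetilde v$, and $\widetilde G_{\Lambda}=(\widetilde H_{\Lambda}-E)^{-1}$. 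On each cell the entries of $\widetilde H_{\Lambda}-E$ are polynomials in $(E,\omega,\theta_1,\theta_2)$ of degree $\le N^C$, so by Cramer's rule $\widetilde G_{\Lambda}(n,m)=\pm M_{nm}/\det(\widetilde H_{\Lambda}-E)$ with $M_{nm}$ (the minors) and $\det(\widetilde H_{\Lambda}-E)$ polynomials of degree $\le N^C$; since $\widetilde H_{\Lambda}-E$ is real symmetric, all of these are real and $\|\widetilde G_{\Lambda}\|_{\mathrm{HS}}^2=\bigl(\sum_{n,m}M_{nm}^2\bigr)/\det(\widetilde H_{\Lambda}-E)^2$.

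Call $(E,\omega,\theta_1,\theta_2)$ \emph{$4$-good} if $\|\widetilde G_{\Lambda}\|<4\lambda^{-1}e^{\sigma(\Lambda)^b}$ and $|\widetilde G_{\Lambda}(n,m)|<4e^{-\gamma|n-m|}$ for all $n,m\in\Lambda$ with $|n-m|\ge\frac14\sigma(\Lambda)$, and \emph{$4$-bad} otherwise. By the Cramer description the $4$-bad set is cut out, on each cell, by the polynomial inequalities
\[
\sum_{n,m}M_{nm}^2\ \ge\ 16\,\lambda^{-2}e^{2\sigma(\Lambda)^b}\det(\widetilde H_{\Lambda}-E)^2
\]
and, for each $n,m$ with $|n-m|\ge\frac14\sigma(\Lambda)$,
\[
M_{nm}^2\ \ge\ 16\,e^{-2\gamma|n-m|}\det(\widetilde H_{\Lambda}-E)^2;
\]
these automatically contain $\{\det(\widetilde H_{\Lambda}-E)=0\}=\spec(\widetilde H_{\Lambda})$, where $\widetilde G_{\Lambda}$ is undefined. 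Thus the $4$-bad set is semi-algebraic of degree $\le N^C$, and it is empty for $|E|$ large (there $\widetilde G_{\Lambda}$ is small by a Neumann series, $\widetilde H_{\Lambda}$ being banded). We let $\A_{\Lambda}^{\gamma,b}$ be the closure of the $4$-bad set; by (sa5) it is again semi-algebraic, $\deg\A_{\Lambda}^{\gamma,b}\le N^C$, and contained in a bounded range of $E$.

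It remains to verify the two assertions; the fact that the constant $4$ lies strictly between $3$ and $8$ is what buys the needed slack. Two estimates are used throughout. First, the resolvent comparison, exactly as in Proposition~\ref{prop_resolvent}: $\|\widetilde G_{\Lambda}-G_{\Lambda}\|\le 2\lambda\delta_0\|G_{\Lambda}\|^2$ whenever $\lambda\delta_0\|G_{\Lambda}\|<\tfrac12$, which by the choice of $\delta_0$ is negligible compared with both $\lambda^{-1}e^{\sigma(\Lambda)^b}$ and $e^{-\gamma N}$ as long as $\|G_{\Lambda}\|\le 8\lambda^{-1}e^{\sigma(\Lambda)^b}$ (and symmetrically with the roles of $G_{\Lambda}$ and $\widetilde G_{\Lambda}$ interchanged). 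Second, the Lipschitz bounds $\|\partial_{\bullet}G_{\Lambda}\|\le\|G_{\Lambda}\|^2\,\|\partial_{\bullet}(H_{\Lambda}-E)\|$ with $\|\partial_{\theta_i}(H_{\Lambda}-E)\|,\|\partial_{\omega}(H_{\Lambda}-E)\|\le C(v)\lambda N$ and $\|\partial_E(H_{\Lambda}-E)\|=1$, and likewise for $\widetilde G_{\Lambda}$ with $\|\partial_{\bullet}(\widetilde H_{\Lambda}-E)\|$ bounded by $\lambda$ times a polynomial in $N$ (Markov's inequality for $\widetilde v'$): along a ball of radius $e^{-10\gamma_1 N}$ these change $G_{\Lambda}$, resp.\ $\widetilde G_{\Lambda}$, by a negligible amount, as long as the Green's function stays $O(1)$-good on that ball. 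Now if $(E,\omega,\theta_1,\theta_2)\notin\A_{\Lambda}^{\gamma,b}$ then a neighbourhood of it is $4$-good, so there $\widetilde G_{\Lambda}$ obeys the bounds with constant $4$, whence by the resolvent comparison $G_{\Lambda}$ obeys them with constant, say, $5<8$; that is, \eqref{greengood1_weaker}--\eqref{greengood2_weaker} hold. Conversely, if $(E,\omega,\theta_1,\theta_2)\notin S_{\Lambda}^{\gamma,b}$, then $G_{\Lambda}$ is $1$-good there; by the Lipschitz bound it is $2$-good on the closed ball of radius $e^{-10\gamma_1 N}$ about it, hence by the resolvent comparison $\widetilde G_{\Lambda}$ is $3$-good on that ball, so the ball is disjoint from the $4$-bad set and, being closed, from the closure of the $4$-bad set as well; thus $\dist\bigl((E,\omega,\theta_1,\theta_2),\A_{\Lambda}^{\gamma,b}\bigr)\ge e^{-10\gamma_1 N}$, which is exactly $\A_{\Lambda}^{\gamma,b}+B_{e^{-10\gamma_1 N}}\subset S_{\Lambda}^{\gamma,b}$. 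Finally, the statements with $E$ and/or $\omega$ fixed follow by intersecting $\A_{\Lambda}^{\gamma,b}$ with the corresponding coordinate hyperplane (a semi-algebraic operation not increasing the degree, by (sa2)) and restricting the two inclusions. The step needing the most care is threading the constant $4$ through the double comparison $G_{\Lambda}\leftrightarrow\widetilde G_{\Lambda}$ and base point $\leftrightarrow$ ball, together with the point that robustness under the $e^{-10\gamma_1 N}$-enlargement must be argued on $G_{\Lambda}$ and $\widetilde G_{\Lambda}$ themselves, whose derivatives are controlled by $\|G_{\Lambda}\|^2\cdot\lambda N$, rather than on the raw minors and determinant, whose Lipschitz constants are far too large.
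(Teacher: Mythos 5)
Your proof is correct and uses the same overall strategy as the paper: replace the analytic potential by a polynomial of degree $\le N^C$, express the inflated bad set through polynomial inequalities via Cramer's rule, and transfer estimates between the polynomial model and the true operator via the resolvent identity. The one technical divergence is in how the polynomial approximant is produced. You approximate $v$ directly by a one-variable polynomial $\widetilde v$ on $[0,1]$, which forces a partition of $[0,1]^3$ into $\le N^C$ cells cut out by the hyperplanes $\theta_i+n\omega\in\Z$, so that on each cell the fractional part is affine. The paper instead truncates the Fourier series of $v$ to order $D\sim N$ and then replaces $\cos(k\omega),\sin(k\omega),\cos\theta_i,\sin\theta_i$ by their Taylor polynomials; this yields a single polynomial on all of $[0,1]^3$ with no cell decomposition, at the cost of a somewhat larger degree ($\sim N^{10}$). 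Both routes give degree $\le N^C$ and neither is clearly superior; your cells must be folded into the semi-algebraic description, but since there are only polynomially many of them this is harmless under the (sa1) convention. Beyond that, you make explicit two comparisons that the paper treats in one stroke through Proposition~\ref{prop_resolvent}: the resolvent comparison $G_\Lambda\leftrightarrow\widetilde G_\Lambda$ at a fixed parameter, and the Lipschitz estimate $\|\partial_\bullet G_\Lambda\|\le \|G_\Lambda\|^2\|\partial_\bullet (H_\Lambda-E)\|$ used to propagate goodness across the $e^{-10\gamma_1 N}$-ball. The paper absorbs the second into the first by noting that $\partial_E(H-E)=-I$ and $\|\partial_{\omega,\theta_i}V\|_\infty\le CN$, so moving the base point is a potential perturbation of size $\le CNe^{-10\gamma_1 N}<e^{-3\gamma_1 N}$; your version is more transparent but equivalent. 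Your closing remark — that robustness under the $e^{-10\gamma_1 N}$ displacement must be argued on $G_\Lambda$ itself and not on the minors and determinant, whose Lipschitz constants are uncontrollably large — identifies exactly the point where a naive attempt would fail, and the careful $1\to2\to3\to4\to5<8$ threading of constants is sound.
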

\begin{proof} Without loss of generality, one can assume that $(0,0)\in \Lambda$, since the translation of $\Lambda$ by $(k_1,k_2)$ is equivalent to replacing $(\theta_1,\theta_2)$ by $(\{\theta_1+k_1\omega\},\{\theta_2+k_2\omega\})$. 

We will assume that the potential is of the form 
$$
v_{n_1,n_2}(\omega,\theta_1,\theta_2)=w(\theta_1+n_1\omega,\theta_2+n_2\omega),\quad  (n_1,n_2)\in \Lambda,
$$ 
and $w$ is a real analytic function. Let $w_D(\theta_1,\theta_2)$ be a trigonometric polynomial obtained by truncating the Fourier series of $w$ up to order $D$. Clearly, for $c(v)>0$, we have
$$
\|w_D-w\|_{C^1[0,1]^2}\le e^{-c(w) D}.
$$
Proposition \ref{prop_resolvent} implies that \eqref{greengood1} and \eqref{greengood2} can become worse at most by a factor of $2$, if one replaces $w$ by $w_D$, as long as $D\ge C(w,\gamma)N$. We will now replace each trigonometric term of $w_D$ by a polynominal. It is sufficient to consider terms of the form 
$$
\cos (k\omega+\theta_1)=\cos(k\omega)\cos(\theta_1)-\sin(k\omega)\sin(\theta_1)
$$ 
$|k|\le D$, $|\omega|,|\theta_1|<1$. Let $c_M$ be the $M$-th Taylor polynomial for the cosine. Then
$$
|c_M(k\omega)-\cos(k\omega)|\lesssim \frac{|k|^M}{M!}\le \frac{D^M}{M!},
$$
which can be made small by taking, say, $M\gtrsim D^3$. We thus replace each $v_{n_1,n_2}(\omega,\theta_1,\theta_2)$ by a polynomial $w_{n_1,n_2}(\omega,\theta_1,\theta_2)$ of degree, say, $\le C(w,\gamma) N^{10}$, so that
$$
\|v_{n_1,n_2}-w_{n_1,n_2}\|_{C^1([0,1]^3)}\le e^{-100 \gamma_1 N}.
$$
Now, define $\A_{\Lambda}$ to be the set of parameters for which \eqref{greengood1} or \eqref{greengood2} fail for the operator with potential $w_{n_1,n_2}$, by at least a factor of $4$ in the right hand side. Then, Proposition \eqref{prop_resolvent} and the fact that $v_{n_1,n_2}$ is very close to $w_{n_1,n_2}$, implies the claims of the lemma.
\end{proof}
\begin{remark}
While the constants in the proof depend on $\gamma$, one can easily transfer the dependence on $\gamma$ into the choice of the initial scale; which, in further arguments, will depend on $\gamma$ anyway. One can also replace the constant $8$ in \eqref{greengood1_weaker}, \eqref{greengood2_weaker} by any $C>1$.
\end{remark}

In the following, we will denote by $C'_{\mathrm{int}}$, $C'_{\mathrm{dio}}$ some constants that depend, respectively, only on $C_{\mathrm{int}}$, $C_{\mathrm{dio}}$. We will use this notation in a way similar to $N^C$. Namely, the use of $C'_{\mathrm{int}}$ will mean that the claim is true with some constant that depends only on $C_{\mathrm{int}}$. The actual value of that constant may depend on the context.
\begin{prop}
\label{bgs_44}
Fix $\gamma,C_{\mathrm{dio}},\delta_{\mathrm{dio}},m_{\mathrm{int}},C_{\mathrm{int}}>0$. There exist constants $C'_{\mathrm{int}}$ and $C'_{\mathrm{dio}}$ such that, if $0<b<1$, $0<\rho<1$, $C_1>0$ satisfy
\beq
\label{brho}
b-3/4-3\delta_{\mathrm{dio}}-3\rho>0,\quad C_1>\frac{C'_{\mathrm{int}}+C'_{\mathrm{dio}}+1/\rho}{b-3/4-3\delta_{\mathrm{dio}}-3\rho},
\eeq
then there exists a positive integer $\overline{N}_0=\overline{N}_0(\gamma,b,\rho,C_1,C_{\mathrm{dio}},\delta_{\mathrm{dio}},C_{\mathrm{int}},m_{\mathrm{int}})$ such that the following is true. Suppose $N_0,N_1$ are positive integers,
$$
\overline{N}_0\le 100 N_0\le N_1^{\rho},
$$ 
and for every $N_0\le M\le N_1$ and any elementary region $\Lambda\in \ER(M)$ we have
\beq
\label{stronger_assumption}
\sup\limits_{L\subset \R^2,E\in \R}|\b^{\gamma,b}(\Lambda,E)\cap L|_1\le \exp(-M^{\rho}),
\eeq
where $L$ runs over all unit line segments. Let $N$ satisfy
$$
N_0^{C_1}\le N\le N_1^{\rho C_1},
$$
and assume that $\omega$ satisfies the finite scale Diophantine condition
\beq
\label{eq_diophantine_bgs44}
\|k\omega\|\ge C_{\mathrm{dio}}|k|^{-1-\delta_{\mathrm{dio}}}, 1\le |k|\le 100 N.
\eeq
Then, for all $\Lambda\subset \ER(N)$ and any unit line segment $L\subset \R^2$ we have (for all possible translations of $U$)
\beq
\label{slices}
\sup\limits_{E\in \R}|\{(\theta_1,\theta_2)\in L\colon \|G_{\Lambda}(\theta_1,\theta_2,E)\|>\lambda^{-1}e^{N^b}\}|_1<e^{-N^{3\rho}}.
\eeq
\end{prop}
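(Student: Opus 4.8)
The plan is to follow the scale-to-scale passage of \cite[Lemma~4.4]{BGS}: reduce the desired norm bound to a counting statement for lattice points lying in a thin semi-algebraic set, apply Theorem~\ref{arith}, and then convert a sub-linear count of bad blocks into an operator-norm estimate through the covering lemmas and a Cartan-type argument. First I would fix an auxiliary scale $M$ with $N_0\le M\le N_1$, chosen as small as possible subject to $\tfrac12 M^{\rho}\ge (1+\delta_{\mathrm{dio}})\log(100 N)$ --- for instance $M=\max\{N_0,\lceil(C\log N)^{1/\rho}\rceil\}$, the bound $M\le N_1$ being forced by $N\le N_1^{\rho C_1}$ together with the freedom to take $\overline{N}_0$ (hence $N_1^{\rho}$) large. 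Applying Lemma~\ref{semialg_reduction} at every scale $M'$ with $M\le M'\le 2M+1$, and taking the union over the at most $(M')^{C_{\mathrm{int}}}$ distinct restrictions of translates of $U$ to $\ER(M')$, I obtain for each pair $(E,\omega)$ semi-algebraic sets $\widetilde{\A}_{M'}=\widetilde{\A}_{M'}(E,\omega)\subset\T^2$ of degree $\le (M')^{C'_{\mathrm{int}}}$ such that off $\widetilde{\A}_{M'}$ the Green's functions of all translates of $U|_{\ER(M')}$ obey \eqref{greengood1_weaker}, \eqref{greengood2_weaker}, while \eqref{stronger_assumption} gives $|\widetilde{\A}_{M'}\cap L|_1\le (M')^{C_{\mathrm{int}}}e^{-(M')^{\rho}}$ on every unit segment $L$. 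Since a segment of length $\ell$ inside a set forces that set to have a slice of measure $\ge\ell$, the choice of $M$ and \eqref{eq_diophantine_bgs44} yield $\eta(\widetilde{\A}_{M'})\le e^{-M^{\rho}/2}<\min_{1\le|k|\le 2N}\|k\omega\|$.

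Next I would count bad blocks at a fixed $(E,\omega,\theta_1,\theta_2)$. Translating an $\ER(M')$-block by $n=(n_1,n_2)$ replaces the phase by $(\{\theta_1+n_1\omega\},\{\theta_2+n_2\omega\})$ and the background by $T_nU$, so such a block fails the good bounds only if $(\{\theta_1+n_1\omega\},\{\theta_2+n_2\omega\})\in\widetilde{\A}_{M'}$, equivalently $(\{n_1\omega\},\{n_2\omega\})\in\widetilde{\A}_{M'}-(\theta_1,\theta_2)$ --- a semi-algebraic set (broken into $\le 4$ pieces across the torus boundary) with the same degree and $\eta$. Theorem~\ref{arith}, applicable because $\eta<\min_{1\le|k|\le 2N}\|k\omega\|$ and \eqref{eq_diophantine_bgs44} holds, then bounds the number of such $n$ with $|n_1|,|n_2|\le N$ by $(\deg\widetilde{\A}_{M'})^{C}\,C'_{\mathrm{dio}}\,N^{3/4+3\delta_{\mathrm{dio}}}\le M^{C''_{\mathrm{int}}}C'_{\mathrm{dio}}N^{3/4+3\delta_{\mathrm{dio}}}$. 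If $M=(C\log N)^{1/\rho}$ the factor $M^{C''_{\mathrm{int}}}$ is polylogarithmic in $N$; if $M=N_0$ it is at most $N^{C''_{\mathrm{int}}/C_1}$, since then $N_0\le N^{1/C_1}$. In either case condition \eqref{brho} on $C_1$ --- whose numerator absorbs $C''_{\mathrm{int}}$ into $C'_{\mathrm{int}}$ and the $C_{\mathrm{dio}}$-dependence into $C'_{\mathrm{dio}}$, and whose denominator carries the loss $\tfrac34+3\delta_{\mathrm{dio}}+3\rho$ --- makes this count $<N^{b}$, uniformly in $(E,\omega,\theta_1,\theta_2)$ and over translates of $U$. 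Hence, at every point of $L$, any $\Lambda\in\ER(N)$ contains fewer than $N^b$ bad $M'$-blocks for every $M'\in[M,2M+1]$.

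The remaining step is to turn ``fewer than $N^b$ bad blocks'' into the norm bound, establishing the norm and off-diagonal bounds at all scales in $[M,N]$ simultaneously by induction on the scale: with fewer than $N^b$ bad blocks in every sub-region, Proposition~\ref{bgs24} yields off-diagonal decay, while for the operator norm one excises the (sparse) bad blocks, expands the resolvent through the surrounding good blocks, and applies a Cartan-type estimate (Proposition~\ref{harnack}) to control $\|G\|$ on the excised region by a determinant that, off the semi-algebraic bad set, is not too small; Proposition~\ref{bgs22} then gives $\|G_{\Lambda}(\theta_1,\theta_2,E)\|<\lambda^{-1}e^{N^b}$. Since this holds at \emph{every} point of $L$, the left side of \eqref{slices} in fact vanishes once $M$ lies in the admissible range, so the bound $e^{-N^{3\rho}}$ is satisfied with room to spare (the factor-$8$ loss from Lemma~\ref{semialg_reduction} being absorbed by shrinking $b$ slightly at the outset). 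I expect the genuine difficulty to be exactly this conversion: a sub-linear \emph{total} count of bad blocks can still exceed the number of blocks clustered near a single site, so one cannot merely quote Proposition~\ref{bgs22}; the delicate interplay of scales, the paving and resolvent-expansion argument that produces a good region $W(m)$ around every $m$ by routing around the bad blocks, and the Cartan estimate for the resolvent norm off the semi-algebraic bad set are the points needing most care --- and are carried out in detail in the Appendix. A secondary nuisance is verifying the arithmetic of \eqref{brho} at the lower endpoint $N=N_0^{C_1}$, which is precisely what forces the dependence of $C_1$ on $C_{\mathrm{int}}$, $\delta_{\mathrm{dio}}$ and $\rho$.
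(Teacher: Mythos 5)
Your counting step is correct and closely follows the paper: reduce to a semi-algebraic bad set of degree $\le M^{C'_{\mathrm{int}}}$ via Lemma~\ref{semialg_reduction}, invoke Theorem~\ref{arith} under the Diophantine condition to get fewer than $N^b$ bad small blocks, and this is exactly where the arithmetic in \eqref{brho} enters. The gap is in your final conversion to the norm estimate, and it is fatal: you conclude that ``the left side of \eqref{slices} in fact vanishes,'' i.e.\ that the bad set is empty. It is not, and this is precisely why \eqref{slices} is stated as a \emph{measure} bound $e^{-N^{3\rho}}$ rather than as emptiness. A sub-linear count of bad $M$-blocks does not produce, for every point $m\in\Lambda_0$ and every energy $E$, a good block $W(m)$ covering $m$: a block centered at $m$ may simply be bad for the given $(\theta_1,\theta_2)$, and there is no ``routing around'' that repairs this. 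Proposition~\ref{bgs22} requires a good block around \emph{every} $m$ and therefore cannot be applied to all of $\Lambda_0$; the paper applies it only to $\Lambda_0\setminus\Lambda_\ast$ (the complement of the union of bad blocks), obtaining the bound $\|G_{\Lambda_0\setminus\Lambda_\ast}\|\le 16\lambda^{-1}M_0^2e^{(2M_0)^b}$.

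What actually produces the measure bound on $L$ is the matrix-valued Cartan lemma, Proposition~\ref{prop_cartan} (not Proposition~\ref{harnack}, which is the Levin covering estimate for scalar analytic functions and is used only for the initial-scale Theorem~\ref{initial_ldt}). One parametrizes $L$ by a real variable, extends the restricted Hamiltonian $A(\sigma)=\lambda^{-1}R_{\Lambda_0}(H(\theta(\sigma))-E)R_{\Lambda_0}$ analytically in $\sigma$ to a complex strip (this is where analyticity of $v$ enters), and takes $D_0$ equal to the bad-site count $\le M_0^{C'_{\mathrm{int}}}C'_{\mathrm{dio}}N^{3/4+3\delta_{\mathrm{dio}}}$. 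Assumption (2) of Proposition~\ref{prop_cartan} is supplied by the bound on $\Lambda_0\setminus\Lambda_\ast$; assumption (3) by a secondary scale $M_1=[(10M_0)^{1/\rho}]$ at which all boxes are good off a measure-$e^{-M_1^\rho/2}$ set. Then \eqref{eq_cartan} with $\varkappa=e^{-N^b}$ gives exactly the $e^{-N^{3\rho}}$ bound, and the exponent arithmetic ($C_1b>C'_{\mathrm{int}}+C_1(3/4+3\delta_{\mathrm{dio}}+3\rho)+1/\rho+C''_{\mathrm{dio}}$) is where \eqref{brho} is needed. Your proposal never sets up the Cartan input and implicitly assumes the resolvent is controlled pointwise, which is the precise content that the Cartan lemma is there to replace.
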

The line of the argument in the proof is very similar to \cite[Lemma 4.4]{BGS}. We include it in the Appendix for the convenience of the reader. The following is the main result of this section. 
\begin{cor}
\label{green_main}
Fix $C_{\mathrm{dio}},\delta_{\mathrm{dio}},b,\rho,C_1,C_{\mathrm{int}},m_{\mathrm{int}}$ as in Proposition $\ref{bgs_44}$, with extra assumption $C_1>2/\rho$. Fix also $\gamma_0>0$. There exist $\lambda_0=\lambda_0(C_{\mathrm{dio}},\gamma_0,b,\rho,\delta_{\mathrm{dio}},C_1,C_{\mathrm{int}},m_{\mathrm{int}})$ such that, for $\lambda\ge \lambda_0$, one can find $\widetilde{N}_0=\widetilde{N}_0(\lambda,C_{\mathrm{dio}},\gamma_0,b,\rho,\delta_{\mathrm{dio}},C_1,C_{\mathrm{int}},m_{\mathrm{int}})$ so that the following bound \eqref{green_main_eq} is true for all $N\ge \widetilde{N}_0$, all $\omega$ satisfying the finite scale Diophantine condition
\beq
\label{eq_diophantine_green_main}
\|k\omega\|\ge C_{\mathrm{dio}}|k|^{-1-\delta_{\mathrm{dio}}}, 1\le |k|\le 100 N^2,
\eeq
all $\Lambda\in \ER(N)$, and all unit line segments $L$ in $\R^2$:
\beq
\label{green_main_eq}
|\b^{\gamma,b}_U(\Lambda,E)\cap L|_1\le \exp\{-\sigma(\Lambda)^{\rho}\}.
\eeq
For the asymmetric case, one needs in addition to assume
\beq
\label{green_main_eq_energy}
|E-U_j|\ge \frac{\lambda}{\exp\{(\log\lambda)^{1/(\varepsilon C_1)}\}},\quad j=1,\ldots, N_{\mathrm{int}}.
\eeq
where $0<\varepsilon=\varepsilon(b,m_{\mathrm{int}},C_{\mathrm{int}})<1$ is a small constant.
\end{cor}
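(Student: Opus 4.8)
The corollary is the fixed-energy large deviation estimate, and I would obtain it by a bootstrapping multi-scale induction: the single step is Proposition \ref{bgs_44}, which turns the strengthened all-directions assumption \eqref{stronger_assumption} on a band of scales $[N_0,N_1]$ into the norm bound \eqref{slices} at a much larger scale $N$; this is closed up by the covering Propositions \ref{bgs22}--\ref{bgs24}, which promote norm bounds to genuine off-diagonal decay, and is seeded by the initial-scale estimates of Section 3. Iterating over a rapidly growing sequence of scales, one re-establishes at each stage the bound
\[
\sup_{L,\,E}\bigl|\b_U^{\gamma,b}(\Lambda,E)\cap L\bigr|_1\le\exp\{-\sigma(\Lambda)^{\rho}\},\qquad \Lambda\in\ER(N),
\]
with $L$ ranging over unit line segments, which is \eqref{green_main_eq}; the rate $\gamma$ is kept slightly above the target $\gamma_0$ throughout.

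\textbf{Initial scale.} First I would fix $N_0=N_0(\lambda)$ to be a suitable slowly growing power of $\log\lambda$, so that $\lambda\ge e^{N_0^{b}}$ holds with room --- this is what forces $\widetilde N_0$ to depend on $\lambda$. In the asymmetric case, Proposition \ref{initial_nosym}, applied exactly as in the proof of Theorem \ref{initial_ldt} (the resolvent expansion \eqref{resol1}--\eqref{resol2} with $\delta=\lambda^{-1/2}$), gives at every scale $M\in[N_0,N_1]$ --- $N_1$ a fixed power of $N_0$ --- the line-section bound $\le C\lambda^{-\alpha/2}|\Lambda|$, uniformly for $E$ in the relevant compact range $|E-U_j|\lesssim\lambda$; for $\lambda\ge\lambda_0$ this is $\le\exp\{-M^{\rho}\}$, with no restriction on $E$. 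When the input is instead Theorem \ref{initial_sym}, one picks $\mu$ so small that $b(1-\mu)>\rho$; since that theorem only applies for $|E-U_j|\ge\lambda\exp\{-\sigma(\Lambda)^{b\mu}\}$ and $\sigma(\Lambda)\asymp N_0$, this is precisely the restriction \eqref{green_main_eq_energy} for a suitable small $\varepsilon=\varepsilon(b,m_{\mathrm{int}},C_{\mathrm{int}})$. Finally, passing from $\b$ to $\b_U$ costs a union bound over the $\le M^{C_{\mathrm{int}}}$ distinct restrictions of the translates of $U$ to a box of size $M$ (low complexity, \eqref{eq_lowcomp_def}); this factor is absorbed into the exponent by shrinking $\rho$ slightly at the outset. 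This yields \eqref{stronger_assumption} on $[N_0,N_1]$.

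\textbf{Induction step.} Given \eqref{stronger_assumption} on $[N_0,N_1]$, Proposition \ref{bgs_44} --- whose hypotheses \eqref{brho}, \eqref{eq_diophantine_bgs44} are covered by the standing assumptions and by \eqref{eq_diophantine_green_main} --- yields the norm bound \eqref{slices} on the whole range $N\in[N_0^{C_1},N_1^{\rho C_1}]$. To upgrade to off-diagonal decay I would run Proposition \ref{bgs24} on $\Lambda\in\ER(N)$ with an intermediate scale $M'\sim N^{\tau}$, and the two hypotheses are supplied as follows. The norm bounds \eqref{bgs28} at scales $L\in[M',N]$ come from \eqref{stronger_assumption} on the lower end and \eqref{slices} on the upper end of that range. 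For the bad-box density, Lemma \ref{semialg_reduction} represents the bad set $\b^{\gamma,b}(\Lambda',E)$ of an $M'$-region $\Lambda'$ (for the fixed $\omega$, in the variables $\theta_1,\theta_2$) as a small neighbourhood of a semi-algebraic $\A\subset[0,1]^2$ with $\deg\A\le(M')^{C}$; by \eqref{stronger_assumption} its line-sections in \emph{all} directions are $\le\exp\{-(M')^{\rho}\}$, hence so is $\eta(\A)$ --- this is exactly where the strengthened inductive assumption is used. A bad $M'$-subregion of $\Lambda$ at position $(k_1,k_2)$, $|k_i|\lesssim N/M'$, is one for which $(\{k_1\omega\},\{k_2\omega\})$ lies in the torus-translate $\A-(\theta_1,\theta_2)$, which has the same degree and the same $\eta$; since $M'$ is a positive power of $N$, \eqref{eq_diophantine_green_main} gives $\eta(\A)<\min_{1\le|k|\le 2N}\|k\omega\|$ for $N\ge\widetilde N_0$, so Theorem \ref{arith} bounds the number of such $(k_1,k_2)$ --- crucially, \emph{uniformly in $(\theta_1,\theta_2)$, with no extra exceptional set} --- by $C'_{\omega}(\deg\A)^{C}N^{3/4+3\delta_{\mathrm{dio}}}$, which is $<N^{b}$ once $\tau$ is small and $N$ large, by \eqref{brho}. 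Thus the density hypothesis holds for \emph{every} phase, and Proposition \ref{bgs24} delivers $|G_\Lambda(\theta_1,\theta_2,E)(m,n)|\le e^{-\gamma'|m-n|}$ for $|m-n|>N/4$ with $\gamma'=\gamma-N^{-\delta}$, off the norm-exceptional set above. A union with \eqref{slices} at scale $N$ and over the $\le N^{C_{\mathrm{int}}}$ translates of $U$ re-establishes \eqref{stronger_assumption} (with $\gamma'$ in place of $\gamma$) on the next band. Since the scale sequence grows super-geometrically, $\sum_k N_k^{-\delta}<\infty$, so starting the induction with $\gamma$ a touch above $\gamma_0$ keeps all rates $\ge\gamma_0$; taking $\lambda_0,\widetilde N_0$ large enough for all smallness and scale-matching requirements and iterating, \eqref{green_main_eq} follows for every $N\ge\widetilde N_0$.

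\textbf{Main obstacle.} The analytic and semi-algebraic ingredients are all at hand; the real work, as the text itself signals, lies in the control of constants, where competing demands must be met at once: the decay rate must survive infinitely many scales without falling below $\gamma_0$; the arithmetic count of bad boxes in Theorem \ref{arith} is only $N^{3/4+3\delta_{\mathrm{dio}}+o(1)}$, which forces $b>3/4+3\delta_{\mathrm{dio}}+3\rho$ and a correspondingly large $C_1$ (as in \eqref{brho}), and this is also what lets $\tau$ be taken small enough that the factor $(\deg\A)^{C}=(M')^{O(1)}$ is harmless; and the intermediate scale $M'$ together with the windows $[N_0,N_1]\to[N_0^{C_1},N_1^{\rho C_1}]$ must be chained so that the norm bounds needed by Proposition \ref{bgs24} are available at the required quality with no scale uncovered --- this is where $C_1>2/\rho$ enters, and it is the genuinely delicate bookkeeping. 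The only irreducible loss of energies occurs where the initial-scale estimate carries a restriction, i.e.\ when Theorem \ref{initial_sym} is the input: its hypothesis $|E-U_j|\ge\lambda\exp\{-\sigma(\Lambda)^{b\mu}\}$ at scale $N_0\asymp(\log\lambda)^{1/(\varepsilon C_1)}$ is exactly \eqref{green_main_eq_energy}; Proposition \ref{initial_nosym} needs no such restriction.
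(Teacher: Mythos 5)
Your proposal follows essentially the same route as the paper: seed with the Section 3 initial-scale large deviation estimates, iterate Proposition \ref{bgs_44} (norm bound off a thin phase set) followed by Proposition \ref{bgs24} (upgrade to off-diagonal decay, with the bad-box density supplied by Lemma \ref{semialg_reduction} plus the arithmetic count of Theorem \ref{arith}), on a super-geometrically growing sequence of scales, keeping $\gamma$ above $\gamma_0$ by starting slightly higher and summing the $N^{-\delta}$ losses. The conditions $C_1>2/\rho$ (so the output band $[N_0^{C_1},N_1^{\rho C_1}]$ overlaps the next input band) and \eqref{brho} (so the arithmetic count $N^{3/4+3\delta_{\mathrm{dio}}}$ times the degree factor stays below $N^b$) appear exactly where you place them, and the energy restriction \eqref{green_main_eq_energy} is traced, as in the paper, to the initial-scale Theorem \ref{initial_sym}.

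The one genuine deviation is in how you set up Proposition \ref{bgs24}. The paper applies it with a moderately sized intermediate scale $M_1\sim N^{1/2}$ (so $\tau=1/2$, comfortably inside $[1/4,3/4]$) and then establishes the required bad-box density \emph{indirectly}: it runs Lemma \ref{semialg_reduction} and Theorem \ref{arith} at a much smaller auxiliary scale $M_0\sim N^{\varepsilon/C_1}$, where the semialgebraic degree factor $M_0^{C'_{\mathrm{int}}}$ is genuinely harmless, and then argues that a bad $M_1$-region forces a bad $M_0$-box within it, so bad $M_1$-regions inherit the $N^b$ count. You instead propose a single auxiliary scale $M'\sim N^{\tau}$ with $\tau$ chosen once small enough that $(\deg\A)^C\sim N^{\tau C'}$ is already absorbed by the slack in $b-3/4-3\delta_{\mathrm{dio}}-3\rho>0$, bypassing the two-scale passage. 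This is a legitimate simplification, provided one notes that the Remark following Proposition \ref{bgs24} permits $\tau$ to lie in any fixed interval $[\tau_0,\tau_1]\subset(0,1)$ (so a fixed small $\tau$ is fine, with $\delta(b,\tau)$ a fixed constant), and that the super-geometric growth of the scales controls $\sum_j N_j^{-\delta}$ uniformly regardless of how small $\delta$ is. Where you are vaguest --- ``shrinking $\rho$ slightly at the outset'' to absorb the $M^{C_{\mathrm{int}}}$ translation factor, and ``chained so that... no scale uncovered'' --- is precisely where the paper spends its effort: it keeps $\rho$ fixed and absorbs factors by enlarging $\widetilde N_0$, and it sets up the explicit chain $\widetilde N_{j+1}=\widetilde N_j^2$ so that the input band $[\widetilde N_{j+1}^{\varepsilon},\widetilde N_{j+1}^{C_1}]$ is covered by the output band $[\widetilde N_j^{\varepsilon},\widetilde N_j^{2C_1}]$ of the previous step, which is the concrete reason $C_1>2/\rho$ is needed. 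Your outline is correct in substance; those two places are where a careful write-up must actually land the constants.
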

\begin{proof}
Fix some $\varepsilon=\varepsilon(b,m_{\mathrm{int}},C_{\mathrm{int}})>0$ (the exact choice will be made later in the proof).
There exists a scale $N_0'=N_0'(\gamma_0,b)$ and $\delta=\delta(\gamma_0,b)$, such that Proposition \ref{bgs24} is applicable in the range 
\beq
\label{eq_gamma_tau_choice}
\gamma\in [\gamma_0,2\gamma_0]=[\gamma_0,\gamma_1],\quad \tau\in [1/4,3/4]
\eeq
for all $N\ge N_0'(\gamma_0,b)$ and with $\delta(b,\tau)=\delta$. Choose $\overline{N}_0$ such that Proposition \ref{bgs_44} is applicable with $\gamma=\gamma_0$ and other parameters introduced earlier, and take $\widetilde{N}_0\ge \max\{100 \overline{N}_0,N_0'\}^{1/\varepsilon}$. Suppose also that $\lambda$ and $E$ are chosen to satisfy \eqref{stronger_assumption} with $\gamma=\gamma_1=2\gamma_0$ for all ``initial scales'' $M\in [\widetilde{N}_0^{\varepsilon},\widetilde{N}_0^{C_1}]$. We have yet to show that such choice is possible in line with \eqref{green_main_eq_energy}, which will be discussed in the end of the proof.

Since $C_1> 2/\rho$, Proposition \ref{bgs_44} applied with $N_0=\widetilde{N_0}$, $N_1=[\widetilde{N}_0^{C_1}]$ thus provides a norm bound for Green's function \eqref{slices} for $N\in [\widetilde{N}_0^{C_1},\widetilde{N}_0^{2 C_1}]$. We now need to extend the off-diagonal decay bound \eqref{greengood2} from $[\widetilde{N}_0^{\varepsilon},\widetilde{N}_0^{C_1}]$ (where we have it due to the assumption), into the new interval $[\widetilde{N}_0^{C_1},\widetilde{N}_0^{2 C_1}]$. Let
$$
N\in [\widetilde{N}_0^{C_1},\widetilde{N}_0^{2 C_1}],\quad M_0=[N^{\varepsilon C_1^{-1}}]\in [\widetilde{N}_0^{\varepsilon},\widetilde{N}_0^{2\varepsilon}],\quad M_1=[N^{1/2}].
$$ 
Both scales $M_0$ and $M_1$ satisfy \eqref{stronger_assumption} with $\gamma=\gamma_1$. We follow the argument in \cite[Corollary 4.5]{BGS} and apply Proposition \ref{bgs24} with $M=M_1$ and $N=N$, which agrees with the choice \eqref{eq_gamma_tau_choice}. We will need to verify the main condition in Proposition \ref{bgs24}, that is, that the number of bad boxes of sizes in $[M_1/2,2M_1]$ is bounded by $N^b$. We will do it through an auxiliary scale $M_0$. The following argument is very similar to the proof of Proposition \ref{bgs_44} (see Appendix). Let
\beq
{\mathcal B}=\cup_{M_0+1\le L\le 2M_0+1}\cup_{\Lambda\in \ER(L),\Lambda\subset [-2M_0,2M_0]^2} \b_U^{\gamma_1,b}(\Lambda,E).
\eeq
In view of Lemma \ref{semialg_reduction}, ${\mathcal B}$ can be replaced by a semi-algebraic set $\mathcal A$,  $\deg \mathcal A \le M_0^{C'_{\mathrm{int}}}$, satisfying the following: 
for $(\theta_1,\theta_2)\in [0,1]\setminus\A$, the same conclusion \eqref{greengood1}, \eqref{greengood2} holds as it was for $(\theta_1,\theta_2)\in [0,1]^2\setminus\mathcal B$, with an extra factor of $8$. For any unit line segment $L\subset \R^2$,
$$
|\mathcal A\cap L|_1\le M_0^{C'_{\mathrm{int}}}e^{-M_0^{\rho}}.
$$
Suppose that $\widetilde{N}_0$ is chosen to be large enough (depending on $\varepsilon$), in order to have, for all $M_0$ under consideration,
\beq
M_0^{C'_{\mathrm{int}}}e^{-M_0^{\rho}}\le C_{\mathrm{dio}} N^{-1-\delta_{\mathrm{dio}}}.
\eeq
Then, one can apply Theorem \ref{arith} and obtain
\beq
\label{eq_small_regions}
\#\{n\in [-N,N]^2\colon \theta+n\omega\in \mathcal A\}\le M_0^{C'_{\mathrm{int}}}C_{\mathrm{dio}}' N^{3/4+3\delta_{\mathrm{dio}}}\le C'_{\mathrm{dio}}N^{\varepsilon C_1^{-1}+3/4+3\delta_{\mathrm{dio}}}.
\eeq
where $C'_{\mathrm{int}}$ is a new constant so that the factor $M_0^{C'_{\mathrm{int}}}$ absorbs $(\deg\A)^C$ from Theorem \ref{arith}. The assumption \eqref{brho} provides that one can choose $\varepsilon>0$ in a way that the right hand side of \eqref{eq_small_regions} is dominated by $N^b$; one also needs to make another assumption of largeness of the initial scale, of the same type as in Proposition \ref{bgs24}, which can be taken care of after choosing $\varepsilon$.

The estimate \eqref{eq_small_regions} provides an upper bound on the number of bad $M_0$-regions. Similarly to the derivation \cite[Equation following (4.36)]{BGS}, one can conclude that most of the $M_1$-regions $\Lambda_1\in \ER(M_1)$ satisfy the following:
\beq
\label{eq_m1_regions}
\|G_{\Lambda_1}(\theta_1,\theta_2,E)(m,n)\|\le e^{-\gamma_1|m-n|-C M_0}, \quad \forall m,n\in \Lambda_1,\quad |m-n|>\frac14 M_1.
\eeq
More precisely, an $M_1$ region can violate \eqref{eq_m1_regions} only if there is $M_0$ box with center on that region, violating \eqref{greengood1} or \eqref{greengood2}. Hence, the number of disjoint $M_1$-regions violating \eqref{eq_m1_regions}, is also bounded by $N^b$. This verifies the assumptions of Proposition \ref{bgs24}, which we can now apply and obtain the off-diagonal decay bound \eqref{greengood2} on the scales $[\widetilde{N}_0^{C_1},\widetilde{N}_0^{2 C_1}]$, with 
\beq
\label{eq_new_gamma}
\gamma_2=\gamma_1-M_1^{-\delta}\ge \gamma_1-\widetilde{N}_0^{-C_1\delta/2}.
\eeq
The process can now be repeated with $\widetilde{N}_1=\widetilde{N}_0^2$, and $\widetilde{N}_{j+1}=\widetilde{N}_j^2$, as long as $\gamma_j$ obtained by the analogue of \eqref{eq_new_gamma} stay within the range $[\gamma_0,\gamma_1]$. Due to super-exponential growth of scales $\widetilde{N}_j$, this can be achieved by taking a large initial scale $\widetilde{N}_0$. One can also absorb the extra factor of $8$ from Proposition \ref{semialg_reduction} into a further decrease of $\gamma_j$ similar to \eqref{eq_new_gamma}, which also will not violate $\gamma_j\ge \gamma_0$; note that we only need to do it with respect to \eqref{greengood2}, since \eqref{greengood1} has extra $\lambda^{-1}$ in it, which \eqref{bgs28} does not require.

It remains to explain the choice of the initial scale. Theorem \ref{initial_sym} needs to be applied with the parameter $b'=b(1-\mu)^{-1}$ with some small $\mu$, so that $b'<1$. The condition for the required estimate is 
$$
\lambda\ge \lambda_0(\widetilde{N}_0)\approx e^{\sigma(\Lambda)^{b'}}\approx \exp\{\widetilde{N}_0^{b' C_1}\}.
$$ 
One needs to pick $\widetilde N_0$ in order to satisfy, say, $\log\lambda_0>4\gamma_0$. Then the conclusion of the present Corollary will be true, however, the range of energies would be defined by \eqref{eq_initial_lambdaNE}, whose denominator does not depend on $\lambda$ and therefore is not optimal for large $\lambda$. One can improve it in the following way: if $\lambda$ is larger than $\lambda_0$ that is required for the scheme to work under all previous assumptions on $\widetilde{N}_0$, one can further increase $\widetilde{N}_0$ as much as Theorem \ref{initial_sym} permits for that $\lambda$; one can check that the resulting range of energies will be of the form \eqref{green_main_eq_energy}.
\end{proof}

\section{Line segments in semi-algebraic sets}
In this section, we consider semi-algebraic subsets of $[0,1]^3$ in the variables $(\omega,\theta_1,\theta_2)$. We will establish several estimates on the amount of  long line segments in those sets. Let 
\beq
\label{s_ass1}
S\subset [0,1]^3,\quad \deg S \le B,\quad \dim S\le 2
\eeq
be a two-dimensional semi-algebraic subset. Let also
\beq
\label{s_ass2}
\Seps=S+B_{\varepsilon},\quad 0<\varepsilon<e^{-B^{\rho}},
\eeq
be the open $\varepsilon$-neighborhood of $S$, where $\rho>0$. The coordinates in $[0,1]^3$ will be denoted by $(\omega,\theta_1,\theta_2)$. Suppose that, for any line segment $L\subset \{\omega\}\times [0,1]^2\subset [0,1]^3$, perpendicular to the $\omega$ axis, we have
\beq
\label{s_ass3}
|L\cap S_{2\varepsilon}|_1< e^{-B^{\rho}}
\eeq
(note that the condition involves a larger neighborhood $S_{2\varepsilon}$). Let
$$
e^{(\log B)^{\beta_1}}\le K\le e^{(\log B)^{\beta_2}},\quad 1<\beta_1<\beta_2.
$$ 
Fix some $\nu>0$. For $\xi\in \mathbb S^2$, consider 
$$
A_{\xi}^1=\{(\omega,\theta_1)\colon S_{\varepsilon} \cap ((\omega,\theta_1,0)+[-1,1]\xi\,\text{ contains an interval of size }K^{-\nu}\}\subset [0,1]^2.
$$
$$
A_{\xi}^2=\{(\omega,\theta_2)\colon \Seps \cap ((\omega,0,\theta_2)+[-1,1]\xi\,\text{ contains an interval of size }K^{-\nu}\}\subset [0,1]^2.
$$

Let us call a direction $\xi$ {\it singular}, if the set $\mathrm{Proj}_{\omega}A_{\xi}^i$ contains an interval of size $K^{-1}$ for $i=1$ or $i=2$. For $k=(k_1,k_2)\in [0,K]^2\cap \Z^2$, define 
\beq
\label{xik_def}
\xi_k=\frac{(1,k_1,k_2)}{\sqrt{1+k_1^2+k_2^2}}.
\eeq
\begin{lemma}
\label{elimination_intervals}
Let $\Seps$, $S_{2\varepsilon}$, $K$, $B$, $\nu$, $\rho$ satisfy the above assumptions. Fix $0<c_1<c_2$. There are at most $B^{C(\beta_1,\beta_2,\nu,\rho,c_1,c_2)}$ pairs $k=(k_1,k_2)$ with $c_1 K\le |k|\le c_2 K$ and $\xi_k$ singular.
\end{lemma}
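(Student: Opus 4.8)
The plan is to argue by contradiction: suppose there are more than $B^{C}$ pairs $k=(k_1,k_2)$ with $c_1K\le|k|\le c_2K$ and $\xi_k$ singular, for $C$ a large constant to be chosen. For each such $k$, the projection $\mathrm{Proj}_\omega A^i_{\xi_k}$ (for $i=i(k)\in\{1,2\}$) contains an interval of length $K^{-1}$; by pigeonholing, a positive proportion of the singular $k$ share the same value of $i$, so without loss of generality assume $i=1$ throughout. For each such $k$, choose $\omega_k$ in that interval; since the intervals have length $K^{-1}$ and there are $\gg B^C$ of them inside $[0,1]$, many of them must overlap, so I can actually fix a single $\omega=\omega_*$ lying in $\mathrm{Proj}_\omega A^1_{\xi_k}$ for a large family $\mathcal K$ of directions, $\#\mathcal K\ge B^{C}/K \cdot (\text{const})$, which is still $\ge B^{C'}$ once $C$ is large compared to $\beta_2$ (recall $K\le e^{(\log B)^{\beta_2}}$, so $K$ is sub-polynomial in $B$ and cannot absorb a genuine power of $B$). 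For each $k\in\mathcal K$ there is then a point $(\theta_1^{(k)},0)$ such that the segment $(\omega_*,\theta_1^{(k)},0)+[-1,1]\xi_k$ has a sub-segment of length $K^{-\nu}$ lying in $S_\varepsilon$.

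\textbf{Reduction to a Kakeya-type incidence bound.} The heart of the matter is that we now have $\ge B^{C'}$ line segments of length $K^{-\nu}$, in $\ge B^{C'}$ distinct directions $\xi_k$ with $k$ ranging over a subset of the lattice annulus $c_1K\le|k|\le c_2K$, all contained in $S_\varepsilon\cap(\{\omega_*\}\times[0,1]^2)$ — that is, in a \emph{planar} slice. By the hypothesis \eqref{s_ass3}, the set $S_{2\varepsilon}\cap(\{\omega_*\}\times[0,1]^2)$, being a planar semi-algebraic set of degree $\le B^C$ whose every line section has length $<e^{-B^\rho}$, is extremely thin: by property (sa4) applied to its boundary, it can be covered by $B^C \cdot (K^{-\nu})^{-1}$ balls of radius $K^{-\nu}$ (since it has measure zero, or measure $<e^{-B^\rho}\ll K^{-2\nu}$, in the plane). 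Now the directions $\xi_k$ for $k$ in a lattice annulus of radius $\sim K$ are $\sim K^{-2}$-separated as directions. A family of $K^{-\nu}$-long segments in $K^{-2}$-separated directions is exactly the setup for a discretized Kakeya/bush argument: two such segments in transverse directions $\xi_{k},\xi_{k'}$ meet in a region of diameter $\lesssim K^{-\nu}/\angle(\xi_k,\xi_{k'})$, and a standard incidence count shows that $B^{C'}$ such segments, all confined to a set coverable by $B^C K^{\nu}$ balls of radius $K^{-\nu}$, cannot fit once $C'$ exceeds a bound depending only on the covering exponent and $\nu$ (the segments would force the covered set to have area $\gtrsim (K^{-\nu})^2 \cdot B^{C'}/(\text{overlap})$, contradicting its thinness). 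Equivalently, one invokes directly the Kakeya maximal function bound (Lemma \ref{kakeya} of the paper) for the $\delta$-neighborhoods with $\delta = K^{-\nu}$ applied to the indicator of the covered set.

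\textbf{The main obstacle and how to handle it.} The delicate point is bookkeeping the degrees: one must check that all the auxiliary semi-algebraic sets produced along the way — $A^i_\xi$ as $\xi$ varies, their $\omega$-projections, the slice $S_{2\varepsilon}\cap\{\omega=\omega_*\}$, and the "set of singular directions" — all have degree bounded by $B^C$ with $C$ absolute, using (sa2), (sa3), (sa8) and Tarski--Seidenberg, so that the covering number in (sa4) is genuinely $B^C K^{\nu}$ and not something worse. In particular the set of singular $\xi$ is itself semi-algebraic of degree $\le B^C$, so it cannot contain $\gg B^C$ of the $K^{-2}$-separated lattice directions $\xi_k$ without containing a whole arc of directions of length $\gg K^{-1}$; this is the cleanest way to get the contradiction, and it is why the bound on the number of singular $\xi_k$ is polynomial in $B$ with an absolute exponent (absorbing the $\beta_1,\beta_2,\nu,\rho,c_1,c_2$ dependence into the constant $C$). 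The only genuinely analytic input beyond degree counting is the passage from "every line section of a thin planar semi-algebraic set is short" to "the set is coverable by few small balls", which is exactly (sa4)/(sa5), and the transversality counting for short segments, which is the Kakeya estimate already available as Lemma \ref{kakeya}. I expect the degree-tracking, rather than any geometric idea, to be the main nuisance.
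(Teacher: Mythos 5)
Your central pigeonhole step fails because you have the size relationship between $K$ and $B$ exactly backwards. The hypothesis in the paper is $e^{(\log B)^{\beta_1}}\le K\le e^{(\log B)^{\beta_2}}$ with $1<\beta_1<\beta_2$. Since $\beta_1>1$, we have $\log K\ge(\log B)^{\beta_1}$, i.e.\ $K\ge B^{(\log B)^{\beta_1-1}}$, so $K$ is \emph{super}-polynomial in $B$; in particular $K\gg B^C$ for any fixed $C$. You write "recall $K\le e^{(\log B)^{\beta_2}}$, so $K$ is sub-polynomial in $B$ and cannot absorb a genuine power of $B$", which is the opposite of the truth. Consequently the quantity $\#\mathcal K\ge B^C/K$ that you obtain after pigeonholing is $\ll 1$, not $\ge B^{C'}$: the intervals of length $K^{-1}$ need not overlap at all when there are only $B^C$ of them, since $[0,1]$ can comfortably hold $K\gg B^C$ disjoint intervals of that length. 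This kills the contradiction you are trying to set up.

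There are two further problems downstream. First, even after fixing an $\omega_*$, the line segments in directions $\xi_k=(1,k_1,k_2)/|(1,k_1,k_2)|$ have a nonzero $\omega$-component, so they are not contained in the planar slice $\{\omega_*\}\times[0,1]^2$ as you assert; they only stay within an $\omega$-band of width about $K^{-\nu-1}$ around it, and the intersection of $S_\varepsilon$ with that band is not a planar set to which the 2D covering estimate (sa4) applies directly. Second, invoking Lemma \ref{kakeya} cannot give the stated bound: that lemma only controls the number of exceptional directions by $B^C K$, which is enormously larger than the $B^C$ you need here (again because $K$ is super-polynomial in $B$). This is exactly the reason the paper proves both Lemma \ref{elimination_intervals} (bound $B^C$, via semi-algebraic component counting) and Lemma \ref{kakeya} (bound $B^C K$, via the Kakeya maximal function) — they are different estimates used at different points in Theorem \ref{elimination_main}, and the Kakeya bound is the weaker of the two.

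Your final aside — that the set of singular directions is semi-algebraic of degree $\le B^C$, hence has $\le B^C$ connected components, so $>B^C$ singular $K^{-2}$-separated lattice directions would force a whole arc of singular directions — is indeed the correct starting point and is what the paper does. But you stop right where the real work begins: you do not explain why an arc of singular directions is impossible. That implication is the entire content of the paper's proof: starting from a curve $\mathcal C_0$ of singular directions, one repeatedly "freezes" parameters (the $\omega$-interval, then the sub-interval of the line segment), selects algebraic sub-curves via (sa6) and (sa9) at a cost of $B^C K^{O(1)}$ in diameter (which is fine because these powers of $K$ still dwarf $\varepsilon<e^{-B^\rho}$), extracts a swept-out 2D region in $S_{2\varepsilon}$ from two nearby directions $\xi_s\ne\xi_{s'}$, and finally produces a genuine planar triangle inside $S_{2\varepsilon}$ of size $\gtrsim K^{-O(1)}\gg e^{-B^\rho}$, contradicting \eqref{s_ass3}. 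None of this appears in your sketch, and it cannot be replaced by the incidence/Kakeya count you propose, for the reasons above.
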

\begin{proof}
Let 
$$
\Xi=\{\xi\colon \mathrm{Proj}_{\omega}A_{\xi}^1\text{ contains an interval of size }1/K\}\subset \mathbb S^2.
$$
Clearly, $\Xi$ is a semi-algebraic set of degree $\le B^C$ (see (sa8)) and $\{\xi_k\colon k\in \mathcal K\}\subset \Xi$. The points $\xi_k$ of the latter set are $K^{-2}$-separated. Since $\Xi$ has at most $B^C$ connected components, we may assume, by taking large enough $C(\beta_1,\beta_2,\nu,\rho,c_1,c_2)$, that there are at least two singular points $\xi_k$ in the same component (which will, in the end of the proof, lead to a contradiction). By (sa9), there is a curve $\mathcal C_0$ in $\Xi$ connecting those two singular points. One can assume that $\mathcal C_0$ consists of $B^C$ smooth algebraic pieces of degree $B^C$, of total diameter $\ge K^{-2}$. Hence, one of the pieces is a smooth algebraic curve $\mathcal C_1$ in $\Xi$ of diameter $\ge B^{-C}K^{-2}$.

Let us make a few reductions with the curve $\mathcal C_1$. By definition, each point or $\mathcal C_1$ defines a direction $\xi$ such that $\mathrm{Proj}_{\omega}A_{\xi}^1$ contains $1/K$-interval. Let us split $[0,1]$ into intervals of size $(2K)^{-1}$. Then, for each $\xi\in \mathcal C_1$, the set $\mathrm{Proj}_{\omega}A_{\xi}^1$ contains at least one interval of the form $[j/(2K),(j+1)/(2K)]$. Let us split $\Xi$ into $2K$ (possibly overlapping) pieces:
$$
\Xi_j=\{\xi\in \Xi\colon \mathrm{Proj}_{\omega}A_{\xi}^1\supset [j/(2K),(j+1)/(2K)]\},\quad j=0,1,\ldots,2K-1.
$$
Clearly, each $\Xi_j$ is also semi-algebraic of degree $B^C$. Hence, there is a curve piece 
$$
\mathcal C_2\subset \mathcal C_1,\quad \diam\mathcal C\ge B^{-C} K^{-3},\quad \mathcal C_2\subset\Xi_j\,\,\text{ for some }\,\,0\le j\le 2K-1.
$$ 
Without loss of generality, one can assume that $\mathcal C_2$ is also smooth and connected (both assumptions lead to a further loss of at most $B^C$). We thus achieved the following: by sacrificing a factor of $B^C K$ in the size of $\mathcal C_1$, the condition ``$\mathrm{Proj}_{\omega}A_{\xi}^1$ contains some interval of size $1/K$ for each $\xi\in \mathcal C_1$'' has been replaced by ``$\mathrm{Proj}_{\omega}A_{\xi}^1$ contains a particular interval of size $1/2K$ for each $\xi\in \mathcal C_2$''. We will call this construction ``freezing'' the interval of $\omega$.

By definition of $A_{\xi}^1$, for each $\omega\in I$ and $\xi\in \mathcal C$, there exists $\theta_1(\xi,\omega)$ such that $(\omega,\theta_1(\xi,\omega))+[0,1]\xi)\cap \E$ contains a line segment 
\beq
\label{segment}
(\omega,\theta_1(\xi,\omega))+I'(\xi,\omega)\xi
\eeq
 of length $K^{-\nu}$. We can repeat the same ``freezing'' procedure with the interval $I'(\xi,\omega)$. That is, split $[0,1]$ into $2K^{\nu}$ segments $J_l$ and 
 define 
 $$
 \mathcal D_l=\{(\xi,\omega)\in \mathcal C_2\times I\colon\, \exists\, \theta_1\colon S_{\varepsilon}\text{ contains \eqref{segment} with }I'(\xi,\omega)\supset J_l \}.
 $$
 Clearly, $\mathcal C_2\times I=\cup_{l} \mathcal D_l$. Note that, while we do not require $\theta_1(\xi,\omega)$ and $I'(\xi,\omega)$ to have any algebraic dependence of $(\xi,\omega)$, the existence of such $\theta_1,I'$ with $I'\supset J_{l}$ is a semi-algebraic condition of degree $\le B^C$ for every $l$, and hence $\mathcal D_l$ are semi-algebraic sets of degree $\le B^C$. Since $\mathcal C_2\times I$ has 2D measure $\ge K^{-4}N^{-C}$, one can apply (sa6) and conclude that there exists a curve segment $\mathcal C_3\subset \mathcal C_2$ and an interval $I_2\subset I$, both of diameter $\ge K^{-4-\nu}B^{-C}$, such that $\mathcal C_3\times I_2$ is completely contained in one of the sets $\mathcal D_l$.

We can summarize the previous paragraph in the following claim. There exists a smooth algebraic curve $\mathcal C_3\subset \Xi$, $\diam \mathcal C_3\ge K^{-5-\nu}$, an interval $I_2\subset [0,1]$, $|I_2|\ge K^{-5-\nu}$, and an interval $J\subset [0,1]$, $|J|\ge \frac12 K^{-
\nu}$ such that, for $(\omega,\xi)\in I_2\times \mathcal C_3$, we have for some $\theta_1\in [0,1]$
\beq
\label{notes43}
(\omega,\theta_1,0)+\xi (J+[0,K^{-7-\nu}])+B_{\varepsilon/10}\subset S_{3\varepsilon/2}.
\eeq
Note that the addition of $[0,K^{-7-\nu}]$ can be achieved by slightly shrinking $J$; we will need it for future convenience. The addition of an $\varepsilon/10$-ball is possible due to replacement of $S_{\varepsilon}$ by $S_{3\varepsilon/2}$. Also recall that, since $K\gg B^C$, any fixed power of $B$ can be absorbed into an extra factor $K^{-1}$.

For a fixed $\xi$, the condition \eqref{notes43} defines a semi-algebraic set $\Omega$ in the $(\omega,\theta_1)$-plane, of degree $\le B^C$, whose projection onto the $\omega$ axis contains $I_2$. Since $\Omega$ has at most $B^C$ connected components, there is a connected component $\Omega_0\subset \Omega$ whose projection onto the $\omega$ axis contains an interval 
$$
I_3\subset I_2,\quad |I_3|\ge B^{-C} |I_2|\ge K^{-6-\nu}.
$$ 
Hence, using (sa9), one can select a (piecewise algebraic of degree $\le B^C$) curve $\mathcal C_{\xi}\subset \Omega_0$ such that 
$$
\mathrm{Proj}_{\omega}\mathcal C_{\xi}=I_3,\quad |I_3|\le \diam \mathcal C_{\xi}\le \mathrm{length}(\mathcal C_{\xi})\le B^C.
$$
Since $\diam \mathcal C_3\ge K^{-5-\nu}$, one can find a large number of separated points on $\mathcal C_3$:
$$
\xi_s\in \mathcal C_3,\quad 0\le s<K^{10+2\nu}; \quad |\xi_s-\xi_{s'}|\ge K^{-20-2\nu},\quad s\neq s'.
$$
For each $\xi_s$, one can construct a curve $\mathcal C_{\xi_s}$ described above. Recall that $|J|\ge \frac12 K^{-\nu}$, $\diam \mathcal C_{\xi_s}\ge K^{-6-\nu}$, which implies
\beq
\label{BN46}
|\mathcal C_{\xi_s}+J \xi_s+B_{\varepsilon/200}|\ge K^{-8-2\nu}\varepsilon.
\eeq
The set in the left hand side is contained in $S_{3\varepsilon/2}$, and $|S_{2\varepsilon}|\le B^C \varepsilon$ (due to (sa4)). Hence, since $K\gg B^C$, 
the sets in the left hand side of \eqref{BN46} must have significant overlap. That is, for some $s\neq s'$, we would have
\beq
\label{eq_lower_measure}
|(\mathcal C_{\xi_s}+J \xi_s+B_{\varepsilon/200})\cap (\mathcal C_{\xi_{s'}}+J \xi_{s'}+B_{\varepsilon/200})|\ge K^{-20-4\nu} \varepsilon.
\eeq
Define the following subset of $\mathcal C_{\xi_s}+J \xi_s$:
$$
W=\{x\in \mathcal C_{\xi_s}+J\xi_s\colon \dist(x,\mathcal C_{\xi_{s'}}+J \xi_{s'})\le \varepsilon/100\}.
$$
Clearly, $W$ is a two-dimensional semi-algebraic set of degree $\le B^C$, and
$$
(\mathcal C_{\xi_s}+J \xi_s+B_{\varepsilon/200})\cap (\mathcal C_{\xi_{s'}}+J \xi_{s'}+B_{\varepsilon/200})\subset W+B_{\varepsilon/100}.
$$
Using (sa4), we have the following bound:
$$
|(\mathcal C_{\xi_s}+J \xi_s+B_{\varepsilon/200})\cap (\mathcal C_{\xi_{s'}}+J \xi_{s'}+B_{\varepsilon/200})|\le B^C\varepsilon |W|_2,
$$
which, combined with \eqref{eq_lower_measure}, implies a lower bound on the 2D measure of $W$ in the right hand side, and a consequent 1D bound (since, by Fubini, $\mathcal C_{\xi}$ cannot be longer than $B^C$):
$$
|W|_2\ge K^{-21-4\nu};\quad |(x+J\xi_s)\cap W|_1\ge K^{-22-4\nu}\,\,\,\text{for some}\,\,x\in \mathcal C_{\xi_s}.
$$
Since both $W$ and $x+J\xi_s$ are semi-algebraic of degree $\le B^C$, the segment $x+J\xi_s$ contains a smaller segment:
$$
x+J_1 \xi_s\subset (x+J\xi_s)\cap W,\quad |J_1|\ge K^{-23-4\nu}.
$$ 
However, each point  $y\in (x+J_1\xi_s)$ is also $\varepsilon/100$-close to $C_{\xi_{x'}}+J \xi_{s'}$. Using \eqref{notes43}, we can conclude that $y$ will not leave $S_{2\varepsilon}$ after a displacement smaller than $K^{-7-\nu}$ in the direction $\xi_{s'}$. Hence, the point $x\in S_{3\varepsilon/2}$ satisfies
\beq
\label{planesegment}
x+J_1 \xi_s+[0,K^{-7-\nu}]\xi_{s'}\subset S_{2\varepsilon},
\eeq
which is a piece of a plane (convex hull of a triangle). The bound $|\xi_s-\xi_{s'}|\ge K^{-20-2\nu}$ implies that the angles of the triangle \eqref{planesegment} cannot be too small, and hence it must contain a horizontal line segment of length, say, $K^{-100-10\nu}\gg e^{-B^{\rho}}$. We have obtained a contradiction with the assumption \eqref{s_ass3}, which completes the proof for the case of $A_{\xi}^1$; the argument for $A_{\xi}^2$ is the same.
\end{proof}
The following lemma establishes a stronger result if one allows for $B^C K$ rather than $B^C$ exceptional directions.
\begin{lemma}
\label{kakeya}
Let $S_{\varepsilon}$ and $S_{2\varepsilon}$ satisfy the assumptions from the beginning of the section. Fix $\nu>0$ and $0<c_1<c_2$. There are at most $B^C K$ pairs $k=(k_1,k_2)$, $c_1 K\le |k|\le c_2 K$, such that $S_{\varepsilon}$ intersects some line segment in the direction $\xi_k$ in a set of 1D measure $\ge K^{-\nu}$.
\end{lemma}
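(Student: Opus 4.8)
The plan is to argue by contradiction with a Kakeya-type (bush) estimate, exploiting that the directions $\xi_k$ of \eqref{xik_def} are nearly, but not exactly, perpendicular to the $\omega$-axis. Suppose $S_{\varepsilon}$ meets a line segment in direction $\xi_k$ in a set of measure $\ge K^{-\nu}$ for more than $B^{C}K$ pairs $k$, $c_1K\le|k|\le c_2K$. For each such $k$ fix a line $\ell_k$ in direction $\xi_k$ with $|\ell_k\cap S_{\varepsilon}|_1\ge K^{-\nu}$ and let $T_k\subset S_{2\varepsilon}$ be the $\varepsilon$-neighbourhood of a length-$K^{-\nu}$ subset of $\ell_k\cap S_{\varepsilon}$; this is an $\varepsilon$-tube of length $\gtrsim K^{-\nu}$. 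Record three elementary facts: by (sa4) (used as in the proof of Lemma~\ref{elimination_intervals}), $|S_{2\varepsilon}|\le B^{C}\varepsilon$; since $\xi_k=(1,k_1,k_2)/\sqrt{1+|k|^{2}}$ has $\omega$-component $\sim1/|k|\sim1/K$, each $T_k$ makes angle $\sim1/K$ with the horizontal plane, rises by only $\sim K^{-\nu-1}$ in $\omega$ over its length, and meets every slice $\{\omega=\omega_0\}$ in an $\sim(\varepsilon K)\times\varepsilon$ plate whose long axis points in the horizontal direction $k/|k|$; and \eqref{s_ass3} applied to a horizontal line along such a long axis forces $\varepsilon K<e^{-B^{\rho}}$, so we may assume $\varepsilon$ is smaller than any fixed power of $1/K$.

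The naive bound is insufficient: the $T_k$ lie in distinct $\gtrsim K^{-2}$-separated directions and $K^{-2}\gg\varepsilon K^{\nu}$, so they are essentially pairwise disjoint, but this only gives $\#\{k\}\lesssim|S_{2\varepsilon}|/|T_k|\lesssim B^{C}\varepsilon^{-1}K^{\nu}$, which is vacuous for small $\varepsilon$. The content of \eqref{s_ass3} that must be used is that it prevents $S_{2\varepsilon}$ from resembling a thin near-horizontal slab, into which arbitrarily many nearly-horizontal tubes would fit. So I would: (i) pigeonhole the $T_k$ by the $\omega$-interval of length $\sim K^{-\nu-1}$ they occupy, reducing — at the cost of a factor $\sim K^{\nu+1}$ to be reabsorbed later — to the tubes lying in a single horizontal slab $J^{*}\times[0,1]^{2}$ of $\omega$-width $\sim K^{-\nu-1}$; (ii) observe that in each slice $\{\omega=\omega_0\}$, $\omega_0\in J^{*}$, the surviving tubes appear as $(\varepsilon K)\times\varepsilon$ plates whose directions lie in $\gtrsim1/K$-separated angular windows on $\mathbb{S}^{1}$ — a planar Kakeya configuration, the set of admissible directions being semi-algebraic of degree $\le B^{C}$ by (sa8); (iii) apply the planar Córdoba/Kakeya-maximal $L^{2}$ inequality in each slice to bound the number of surviving angular windows by $|S_{2\varepsilon}\cap\{\omega=\omega_0\}|_{2}$, then integrate in $\omega_0$ over $J^{*}$ by Fubini against $|S_{2\varepsilon}|\le B^{C}\varepsilon$ and feed in \eqref{s_ass3} across slices to cancel the $\varepsilon$-dependence, so that only $\lesssim B^{C}$ angular windows survive; and (iv) since each window carries $\lesssim K$ of the $\xi_k$ (the radial choices with $c_1K\le|k|\le c_2K$), undoing the pigeonholing, conclude that there are at most $B^{C}K$ bad $k$, a contradiction.

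The main obstacle is step (iii). A bare disjointness count, and even a slice-by-slice Córdoba estimate in isolation, still leave a factor $\varepsilon^{-1}$; defeating it requires using \eqref{s_ass3} across different horizontal slices — so that the geometry of $S$, and not merely the volume bound $|S_{2\varepsilon}|\le B^{C}\varepsilon$, is what obstructs the concentration of the near-horizontal tubes near a common plane. Making this non-concentration quantitative and checking that, in the admissible range $e^{(\log B)^{\beta_1}}\le K\le e^{(\log B)^{\beta_2}}$ and $\varepsilon<e^{-B^{\rho}}$, the logarithmic Córdoba loss and all powers of $\varepsilon$ collapse to exactly the target $B^{C}K$ is the technical heart of the argument; the remaining ingredients (pigeonholing, the semi-algebraic degree bookkeeping via (sa8), the reabsorption of $B^{C}$- and $K$-factors) are routine.
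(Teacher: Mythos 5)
There is a genuine gap, and you identify it yourself: step (iii) — defeating the spurious $\varepsilon^{-1}$ and extracting a bound of size $B^{C}$ rather than a power of $\varepsilon^{-1}$ or $K$ from the slice-by-slice Córdoba estimate — is exactly where the argument must produce its content, and your proposal stops at naming the difficulty rather than resolving it. Pigeonholing by $\omega$-slab, doing planar Kakeya in each slice, and then integrating against $|S_{2\varepsilon}|\le B^{C}\varepsilon$ does not obviously collapse; the pigeonholing alone costs a factor $\sim K^{\nu+1}$, and it is not explained how that factor, nor the $\varepsilon$-dependence of the per-slice estimate, gets reabsorbed. The assertion in (iv) that at most $B^{C}$ angular windows survive is the conclusion, not a consequence you have derived. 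As written this is a research plan with an open middle, not a proof.

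The paper's route is structurally different and worth contrasting, because it shows what is missing from yours. The paper applies Wolff's three-dimensional Kakeya maximal estimate directly to $\one_{S_{2\varepsilon}}$ (with $p=5/2$, $q=10/3$), giving $\int_{\mathbb S^{2}}M_{\varepsilon}(\xi)^{10/3}\,d\xi\le B^{C}\varepsilon^{2/3-\delta}$ and hence a small surface-measure bound on the set $W$ of directions with a long section. The decisive extra input is then not another Kakeya estimate but the semi-algebraic structure of $W$: since $W$ is semi-algebraic of degree $\le B^{C}$ and has tiny measure, it lies in an $\varepsilon^{1/4}$-neighbourhood of a smooth algebraic curve $\Gamma\subset\mathbb S^{2}$ of degree $\le B^{C}$; pushing forward to the plane $\omega=K^{-1}$, the lattice points $(k_{1}/K,k_{2}/K)$ with $\xi_{k}\in W$ all lie within $O(K\varepsilon^{1/4})$ of $\tilde\Gamma$, and a curve of bounded degree and length can meet at most $B^{C}K$ of the $K^{-1}$-separated squares. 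This last step — converting a measure bound into a cardinality bound via the algebraic curve — is what your proposal lacks and what the per-slice Córdoba argument has no analogue of: a small 2D Kakeya bound in each slice still leaves you counting tubes, not points on a low-degree curve, and that is precisely where the $\varepsilon$-factors refuse to cancel. If you want to pursue your slicing approach, you would need to recover, from the union of the per-slice direction sets, a low-degree semi-algebraic subset of $\mathbb S^{2}$ of small measure and then run the boundary/curve argument; but at that point you have essentially reconstructed the paper's proof with an unnecessary detour through 2D.
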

\begin{proof}
For $\xi\in \mathbb S^2$, denote by
$$
M_{\varepsilon}(\xi)=\max_{x\in \R^3}|S_{\varepsilon}\cap (x+[0,1]\xi)|_1
$$
the measure of the largest 1D section of $S_{\varepsilon}$ in the direction $\xi$. We first establish that, for all $\delta>0$,
\beq
\label{kakeya_eq1}
\|M_{\varepsilon}\|_{L^{10/3}(\mathbb S^2)}\le C(\delta) B^C \varepsilon^{1/5+\delta}.
\eeq
Suppose $L\subset \ell\subset S_{\varepsilon}$ is a subset of a line segment $\ell$. Denote by $L_{\varepsilon}$ the round cylinder with axis $L$ and the base being a disc of radius $\varepsilon$:
$$
L_{\varepsilon}=\{x\in \R^3\colon \dist(x,\ell)\le \varepsilon,\,\,\mathrm{Proj}_{\ell}x\in L\}.
$$ 
Clearly, $L_{\varepsilon}\in S_{2\varepsilon}$. Now let $f_{\varepsilon}(\xi)$ be the maximal possible volume of cylinders $L_{\varepsilon}(\xi)$ among all line segments in the direction $\xi$. With this definition, $f_{\varepsilon}(\xi)\le f_{\varepsilon}^{\ast}(\xi)$, where $f_{\varepsilon}^{\ast}(\xi)$ is the Kakeya maximal function of the indicator function of $\one_{S_{2\varepsilon}}$, see \cite{W}. Recall also that (sa4) implies $|S_{2\varepsilon}|\le B^C \varepsilon$. The bound \eqref{kakeya_eq1} now follows from the following bound established in \cite[Theorem 1]{W}:
$$
\|f^{\ast}_{\varepsilon}\|_{L^q(\mathbb S^2)}\le C(\delta)\varepsilon^{1-3/p-\delta}\|\one_{S_{2\varepsilon}}\|_{L^p(\R^3)},\quad 1\le p\le 3, \quad q\le 2p',
$$
with $p=5/2$, $q=10/3$. As a consequence,
\beq
\label{kakeya_eq2}
\int_{\mathbb S^2}M_{\varepsilon}(\xi)^{10/3}\,d\xi \le C(\delta) B^C \varepsilon^{2/3-\delta}.
\eeq
Let $W_0=\{\xi\in \mathbb S^2\colon |M_{\varepsilon}(\xi)|\ge K^{-\nu}\}$.  \eqref{kakeya_eq2} implies the following bound on the surface measure:
\beq
\label{W0_bound}
|W_0|_{2}\le C(\delta) K^{\frac{10}{3}(\nu+1)} \varepsilon^{2/3-\delta},
\eeq
Since $S_{2\varepsilon}$ intersects any line segment in at most $N^C$ intervals, $\xi\in W_0$ implies that $S_{2\varepsilon}$ contains a line segment of length $N^{-C}K^{-\nu}$ in the direction $\xi$. Let $W\supset W_0$ be the set of all directions with the latter property. Clearly, $W$ satisfies \eqref{W0_bound} with extra factor $B^{C}$ in the right hand side; also, $W$ is semi-algebraic of degree $\le B^C$ (see (sa8)). Due to \eqref{W0_bound}, $W$ is contained in $\varepsilon^{1/3-o(1)}$-neighborhood of its boundary, and, due to (sa6) and with a possible loss of another $B^C$ factor, {\it can be replaced by the $\varepsilon^{1/4}$-neighborhood of a smooth algebraic curve $\Gamma$ in $\mathbb S^2$, $\deg\Gamma\le B^C$}. All directions $\xi_k$ that we care about, must be in $W$.

For each $\xi$ in $W$, denote by $\tilde{\xi}$ the point of intersection of the line $\R \xi$ with the plane $\omega=K^{-1}$ in $\R^3$. This map will transform $\Gamma$ and $W$ into $\widetilde{\Gamma}$, $\widetilde{W}$, respectively, and, since $W\subset \Gamma+B_{\varepsilon^{1/4}}$, we have 
$$
\widetilde{W}\subset \widetilde\Gamma+B_{CK\varepsilon^{1/4}}.
$$
Moreover, for each 
$$
\xi_k=\frac{(1,k_1,k_2)}{\sqrt{1+k_1^2+k_2^2}}, \quad c_1 K\le |k|\le c_2 K,
$$
we have 
$$
\dist\l(\l(\frac{k_1}{K},\frac{k_2}{K}\r),\widetilde\Gamma \r)\le CK \varepsilon^{1/4}.
$$
Note also $\diam \widetilde\Gamma\le c_2'$ (where $c_2'$ depends on $c_2$). Using (sa4) and $\varepsilon\ll K^{-4}$, we can cover $\widetilde{W}$ by $B^C K$ (open) squares of size $K^{-1}$. Since each square contains at most one point $(k_1/K,k_2/K)$, this implies the required bound.
\end{proof}

\section{Elimination of double resonances}
In this section, we will assume that $C_{\mathrm{dio}},\delta_{\mathrm{dio}}$ are fixed. Their choice will be explained in the beginning of the next section. Denote the finite-scale set of Diophantine frequencies
$$
\dc(N)=\{\omega\in [0,1]\colon \|k\omega\|\ge C_{\mathrm{dio}}|k|^{-1-\delta_{\mathrm{dio}}},\text{ for }1\le |k|\le N\}\supset \dc_{C_{\mathrm{dio}},\delta_{\mathrm{dio}}}=\bigcap\limits_N \dc(N).
$$
Note that, for finite $N$, we have dropped the dependence on $C_{\mathrm{dio}},\delta_{\mathrm{dio}}$ from the notation. In this section and later, we will assume that $\gamma>10$. The main object of study in this section is the ``bad set'' 
\beq
\label{bad_e}
\mathcal E_N^r=\{(\omega,\theta_1,\theta_2)\colon \exists j\colon (\theta_1,\theta_2)\in \b_U^{\gamma,b}([-N,N]^2,E_j(\omega)),\,\omega\in \dc(N^r)\},
\eeq
where $E_j(\omega)$ runs over all eigenvalues of $H_{[-M,M]^2}(0,0)$ over all possible translations of $U$ and $N\le M\le N^r$. Note that the set of $E_j(\omega)$ depends on $N$, $\omega$, and $r$, and has cardinality $\le N^C$. In this section, the constants may depend on $r$. Here $b,\gamma$ are obtained from Corollary \ref{green_main}, so that, for $N\ge N_0$, $\E_N^r$ satisfies the conclusion of Corollary \ref{green_main}: every section of $\E_N^r$ by a unit line segment perpendicular to $\omega$ axis has measure $\le e^{-N^{\rho}}$ (we can slightly decrease $\rho$ to absorb  the  factor $N^C$ which comes from considering multiple energies $E_j(\omega)$ at the same time).

The set $\E_N^r$ is not semialgebraic. However, as discussed in Lemma \ref{semialg_reduction}, one can replace it by a smaller set $\E_N^{r,\mathrm{alg}}$, such that, for $(\omega,\theta_1,\theta_2)\notin \E_N^{r,\mathrm{alg}}$, the Green's function $G_{[-N,N]^2}(\theta_1,\theta_2,E)$ satisfies \eqref{greengood1}, \eqref{greengood2} with a non-essential loss of factor of $10$. In other words, one treat the points of the larger set $\E_N^r\setminus \E_N^{r,\mathrm{alg}}$ as ``good''. Moreover, Lemma \ref{semialg_reduction} also implies that we can assume
$$
\E_N^{r,\mathrm{alg}}+B_{e^{-10\gamma N}}\subset\E_N^{r}.
$$
In order to meet the assumptions of Lemmas \ref{elimination_intervals} and \ref{kakeya}, we will need some further preparations. Consider the layer expansion of $\E_N^{r,\mathrm{alg}}$ provided by (sa10) with $\varepsilon=e^{-20\gamma N}$. Each dyadic layer $\E_N^{r,\mathrm{alg}}(k)$ (in the notation of (sa10)) is a $2^{-(k+1)}$-neighborhood of some semialgebraic subset, with the property that the neighborhood of double size is still contained in $\E_N^{r,\mathrm{alg}}$. Moreover, these layers cover $\E_N^{r,\mathrm{alg}}$, except maybe for $e^{-20\gamma N}$-neighborhood of the boundary $\partial \E_N^{r,\mathrm{alg}}$. The total number of layers is $N^{C}$, where we are still using the convention that $C$ may depend on $r$. Note that the points in the neighborhood of the boundary will still satisfy \eqref{greengood1} and \eqref{greengood2} with, say, a loss of factor of 20. Hence, we can further decrease $\E_N^{r,\mathrm{alg}}$ and then consider a single dyadic layer $\mathcal L$:
\beq
\label{eq_layer_definition}
\mathcal L=\mathcal S+B_{\delta}, \quad \mathcal L+B_{\delta}=\mathcal S+B_{2\delta}\subset \E_N^{r,\mathrm{alg}},\quad e^{-20 \gamma N}\le \delta\le e^{-N^{\rho}},\quad \dim\mathcal S\le 2.
\eeq
Let us define $\widetilde{\b}_U^{\gamma,b}([-M,M]^2,E)$ to be set of $\theta\in [0,1]^2$ satisfying the same conditions as $\b_U^{\gamma,b}([-M,M]^2,E)$, relaxed by a factor of $100$. That is, for all possible translations of $U$, we assume for $(\omega,\theta_1,\theta_2)\in [0,1]^3\setminus \widetilde{\b}_U^{\gamma,b}([-M,M]^2,E)$:
$$
\|G_{[-M,M]^2}(\theta_1,\theta_2,E)\|\le 100 \lambda^{-1}e^{\sigma(\Lambda)^b}
$$
$$
|G_{[-M,M]^2}(\theta_1,\theta_2,E)(n_1,n_2)|\le 
100 e^{-\gamma|n_1-n_2|}\,\text{for all}\,n_1,n_2\in \Lambda,\, |n_1-n_2|\ge \frac14 \sigma(\Lambda).
$$
The following theorem is the key step in proving localization. It shows that removal of a small set of frequencies will exclude double resonances.
\begin{theorem}
\label{elimination_main}
Fix $0<c_1<c_2$. There exists $N_0=N_0(r,\lambda,v,\gamma,b,\rho,c_1,c_2,C_{\mathrm{int}},m_{\mathrm{int}})$ such that, for 
\beq
\label{eq_klm}
N\ge N_0,\quad K=e^{(\log N)^{2/\rho}},\quad M=[(\log N)^{3/2\rho}], 
\eeq
there exists 
\beq
\label{dc_measure}
\Omega^{\mathrm{bad}}_N\subset \dc(N^r),\quad |\Omega_N|\le e^{-\frac12 (\log N)^{3/2}},
\eeq
such that, for every $\omega\in \dc(N^r)\setminus \Omega^{\mathrm{bad}}_N$ and $c_1 K\le |k|\le c_2 K$, the following is true. Suppose $E=E_j(\omega)$ in the notation of \eqref{bad_e}. Then 
$$
(\omega,k_1\omega,k_2\omega)\notin \widetilde{\b}_U^{\gamma,b}([-M,M]^2,E)\cap \widetilde{\b}_U^{\gamma,b}([-N,N]^2,E).
$$ 
In other words, either $M$-box or $N$-box around the point $(k_1,k_2)$ is good at the energy $E$.
\end{theorem}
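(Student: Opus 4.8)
The plan is to handle the two scales $M$ and $N$ separately, running the Kakeya/semialgebraic machinery of Section~7 at the large scale $N$ and a more direct argument at the small scale $M$, everything phrased through the direction vectors $\xi_k$ of \eqref{xik_def}. First I would reformulate the statement geometrically: for $c_1K\le|k|\le c_2K$ the point $(\omega,k_1\omega,k_2\omega)$ lies on the line through the origin in direction $\xi_k$, and as $\omega$ runs over $[0,1]$ this becomes, after reduction modulo $\Z^3$, a segment of arclength $\asymp K$ split into $\asymp K$ parallel ``strands''. By Lemma~\ref{semialg_reduction} and the dyadic layer expansion of (sa10), the event ``$(\omega,k_1\omega,k_2\omega)\in\widetilde{\b}_U^{\gamma,b}([-N,N]^2,E_j(\omega))$ for some $j$'' forces $(\omega,k_1\omega,k_2\omega)$ into one of $\le N^{C}$ (here and below $C=C(r)$) semialgebraic layers $\mathcal L=\mathcal S+B_\delta$ of $\E_N^{r,\mathrm{alg}}$, each satisfying the hypotheses \eqref{s_ass1}--\eqref{s_ass3} of Lemmas~\ref{elimination_intervals} and \ref{kakeya}: $\deg\le B=N^{C}$, radius $\varepsilon=\delta\le e^{-N^\rho}$, the exponent $\rho$ rescaled by $C(r)$ so that the section bound $e^{-N^\rho}$ of Corollary~\ref{green_main} still dominates $e^{-B^\rho}$, and $\beta_1,\beta_2$ taken near $2/\rho$ so that $K=e^{(\log N)^{2/\rho}}$ falls in the window $[e^{(\log B)^{\beta_1}},e^{(\log B)^{\beta_2}}]$.

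At scale $N$ I would apply Lemma~\ref{kakeya} to each layer $\mathcal S$ with a large fixed exponent $\nu$: altogether at most $N^{C}K$ directions $\xi_k$ are such that $\E_N^{r,\mathrm{alg}}$ has a one--dimensional section of length $\ge K^{-\nu}$ in direction $\xi_k$; call these the $N$--exceptional directions. If $\xi_k$ is not $N$--exceptional, then along each of the $\asymp K$ strands the intersection with $\E_N^{r,\mathrm{alg}}$ consists of $\le N^{C}$ pieces of length $<K^{-\nu}$, so $|\{\omega\colon(\omega,k_1\omega,k_2\omega)\in\E_N^{r,\mathrm{alg}}\}|\le N^{C}K^{-\nu}$; summing over the $\le K^2$ non--exceptional $k$ yields $\le N^{C}K^{2-\nu}$, and since $\log K=(\log N)^{2/\rho}$ with $2/\rho>3/2$, a suitable absolute $\nu$ (e.g.\ $\nu=3$) makes this $\le\tfrac12 e^{-\frac12(\log N)^{3/2}}$. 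Deleting these $\omega$ disposes of all non--$N$--exceptional $k$: for such $k$ and the remaining $\omega$ the $N$--box at $k$ is good at every $E_j(\omega)$.

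For the $\le N^{C}K$ $N$--exceptional directions I would pass to $M=[(\log N)^{3/(2\rho)}]$, chosen precisely so that $e^{-M^\rho}=e^{-(\log N)^{3/2}}$ matches the target in \eqref{dc_measure}. It suffices to control the ``doubly bad'' semialgebraic set
$$
\mathcal{R}=\bigl\{(\omega,\theta_1,\theta_2)\colon\exists j,\ (\theta_1,\theta_2)\in\b_U^{\gamma,b}([-N,N]^2,E_j(\omega))\cap\b_U^{\gamma,b}([-M,M]^2,E_j(\omega))\bigr\},
$$
which, being contained in $\E_N^{r,\mathrm{alg}}$, still has line sections $\le e^{-N^\rho}$ and $\deg\le N^{C}$, so Lemma~\ref{kakeya} applies to its layers exactly as above; for all but $\le N^{C}K$ directions one gets $|\{\omega\colon(\omega,k_1\omega,k_2\omega)\in\mathcal{R}\}|\le N^{C}K^{-\nu}$, summable as before. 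For the surviving directions I would invoke Lemma~\ref{elimination_intervals} with the sets $A_\xi^i$ built from $\mathcal{R}$: all but $\le N^{C}$ of them are non--singular, so $\mathrm{Proj}_\omega A_{\xi_k}^i$, semialgebraic of degree $\le N^{C}$ and containing no interval of length $1/K$, has measure $\le N^{C}/K$, which through the strand decomposition together with the thin $\omega$--slices of $\mathcal{R}$ bounds $|\{\omega\colon(\omega,k_1\omega,k_2\omega)\in\mathcal{R}\}|$; the $\le N^{C}$ genuinely singular directions are removed by deleting an $\omega$--set of measure $O(K^{-1})$ for each, with total $\le N^{C}K^{-1}\le e^{-\frac12(\log N)^{3/2}}$ since $(\log N)^{2/\rho}\gg(\log N)^{3/2}$. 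Assembling all deleted $\omega$--sets and absorbing the powers of $N$ into $e^{-\frac12(\log N)^{3/2}}$ (using $2/\rho>3/2$ once more) produces $\Omega^{\mathrm{bad}}_N$; for $\omega\notin\Omega^{\mathrm{bad}}_N$, if the $N$--box at $k$ is bad at $E_j(\omega)$ then $(\omega,k_1\omega,k_2\omega)\notin\mathcal{R}$ forces the $M$--box at $k$ to be good at $E_j(\omega)$, which is the claim.

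I expect the third step to be the main obstacle: reconciling the $\asymp K$ $N$--exceptional directions with the comparatively coarse scale--$M$ thin--section estimate $e^{-(\log N)^{3/2}}$ is exactly why the scales are tuned as in \eqref{eq_klm}, and why an extra per--direction saving of order $K^{-1}$ must be harvested from Lemma~\ref{elimination_intervals} rather than from the thin--section bound alone. Fitting together all the semialgebraic degree bounds, the choice of $\nu$, the admissible $K$--window in both lemmas, the strand bookkeeping, and the interaction with the Diophantine restriction $\omega\in\dc(N^r)$ is the technical heart of the argument.
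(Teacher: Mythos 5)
Your high-level plan closely matches the paper's: decompose the $\omega$-curve $(\omega,\{k_1\omega\},\{k_2\omega\})$ into $\asymp K$ strands in the direction $\xi_k$, use Lemma~\ref{kakeya} to dispose of all but $N^{C}K$ directions at scale $N$, use Lemma~\ref{elimination_intervals} to cut the remaining directions down to $\le N^{C}$ genuinely singular ones plus $\le N^{C}K$ non-singular ones with only $N^{C}$ problematic strands each, and rescue the survivors with $M$-boxes. But the $M$-box step has a genuine gap. The energy $E_j(\omega)$ is an eigenvalue of a box of size up to $N^r$ and so varies with $\omega$; the section bound from Corollary~\ref{green_main} that you invoke via ``thin $\omega$-slices of $\mathcal{R}$'' is a bound on sections perpendicular to the $\omega$-axis at \emph{fixed} energy, and it does not directly apply along a strand where both $(\theta_1,\theta_2)$ and $E_j(\omega)$ move. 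The paper's proof handles this by freezing the energy: it partitions $[0,1]$ into $L=\lfloor 3c_2K+1\rfloor$ intervals with anchors $\omega_0=l/L$, notes $|E_j(\omega)-E_j(\omega_0)|\le 4\lambda N^r/L$, and requires this to be $\le e^{-100\gamma M}$ (equation \eqref{freeze1}) so that Proposition~\ref{prop_resolvent} lets one replace $G_{[-M,M]^2}(\cdot,E_j(\omega))$ by $G_{[-M,M]^2}(\cdot,E_j(\omega_0))$. That inequality is precisely what forces the specific calibration $K=e^{(\log N)^{2/\rho}}$ versus $M=[(\log N)^{3/(2\rho)}]$ in \eqref{eq_klm}, and without it the $M$-box large-deviation estimate cannot be brought to bear on a strand. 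Your proposal never mentions this freezing step, and your defined set $\mathcal{R}$ (with $E_j(\omega)$ floating) cannot be sectioned along strands without it.

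A second, smaller issue: your claim that each singular direction costs only ``an $\omega$-set of measure $O(K^{-1})$'' is unjustified, and in fact not what happens. For a singular pair, $\mathrm{Proj}_\omega A_{\xi_k}^i$ can contain a macroscopic interval, so essentially all $\asymp K$ strands may be problematic; the removal per singular pair is instead the sum over all strands of the $M$-box removal, roughly $K\cdot K^{-1}M^{C}e^{-M^{\rho}}= M^{C}e^{-M^{\rho}}\approx e^{-(\log N)^{3/2}}$, which is in fact \emph{larger} than $K^{-1}=e^{-(\log N)^{2/\rho}}$ but still absorbed into the target $e^{-\frac12(\log N)^{3/2}}$ after multiplying by $N^{C}$. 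Also note that re-running Lemma~\ref{kakeya} on $\mathcal{R}\subset\E_N^{r,\mathrm{alg}}$ cannot by itself shrink the count of exceptional directions below $N^{C}K$; the extra factor of $K^{-1}$ for the non-singular ones must come, as you correctly intuit, from the $\le N^{C}$ problematic strands given by Lemma~\ref{elimination_intervals}, but then each surviving strand still needs the energy-freezing mechanism described above.
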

Theorem \ref{elimination_main} establishes absence of double resonances. If the energy is close to an eigenvalue of $N$-box centered at the origin, then, at the distance $\approx K$ from the origin, any point is a center of a good box with respect to that energy. The technical difference is that the size of the good box is variable, although it does not affect the proof too much.
\begin{proof} Ideally, we would like to show that one can remove a small set of frequencies $\omega$ such that, for all $k_1,k_2$ in the considered range, the vector $(\omega,\{k_1\omega\},\{k_2\omega\})$ avoids the set $\E_N^{r,\mathrm{alg}}$ defined earlier in this section. Using the techniques from Section 7, we will be able to establish this for most pairs $(k_1,k_2)$. To deal with the remaining pairs, we would have to consider smaller boxes of size $M$.

Let $L=\lfloor 3 c_2 K +1\rfloor$. Let us split $[0,1]$ into $L$ intervals of size $L^{-1}$, and take $\omega=\omega_0+\Delta\omega$, $|\Delta\omega|<L^{-1}$, $\omega_0=l/L$. Each interval contains at most $3$ discontinuity points of functions $\{k_1\omega\}$, $\{k_2\omega\}$. Hence, it defines at most three line segments in $[0,1]^3$ in the direction $\xi_k$, each of the form 
\beq
\label{w0_segment}
(\omega_0+\Delta\omega,k_1\Delta\omega+\theta_1,k_2\Delta\omega+\theta_2),
\eeq
parametrized by $\Delta\omega$. 

Let $\mathcal L$ be defined as in \eqref{eq_layer_definition}. Suppose one of the segments \eqref{w0_segment} has large intersection with $\mathcal L$ (that is, the intersection contains a segment of size $K^{-\gamma}$). Let us continue the line containing this segment.
Since either $|k_1|\ge \frac12 c_1 K$ or $|k_2|\ge \frac12 c_1 K$, the intersection of that line with one of the planes $\theta_1=0$ or $\theta_2=0$ has coordinates
$$
(\omega_0+O(1)K^{-1},0,\theta_2')\quad \text{or}\quad (\omega_0+O(1)K^{-1},\theta_1',0),
$$
for some $\theta_1'$, $\theta_2'$. Hence, for each $\omega_0$ such that the segment \eqref{w0_segment} has large intersection with $\mathcal L$, the semi-algebraic set 
\beq
\label{proj_union}
\mathrm{Proj}_{\omega}A_{\xi_k}^1 \cup \mathrm{Proj}_{\omega}A_{\xi_k}^2\subset [0,1]
\eeq
defined in Lemma \ref{elimination_intervals}, contains a point $\omega_0+O(1)K^{-1}$. Since the set \eqref{proj_union} has degree $\le N^C$, there are two possibilities for each pair $k=(k_1,k_2)$:
\begin{itemize}
	\item There are $\le N^C$ possible values of $l$ with $\omega_0=l/K$ such that the set \eqref{proj_union} contains a point $\omega_0+O(1)K^{-1}$.
	\item If there are too many possible values of $l$, then a connected component of one of the sets \eqref{proj_union} contains two points $l_1/K$ and $l_2/K$ with $l_1\neq l_2$, and therefore contains an interval of length $\ge K^{-1}$. In this case, the pair $(k_1,k_2)$ is singular in the sense of Lemma \ref{elimination_intervals} with $S_{\varepsilon}=\mathcal L$, $\varepsilon=\delta$ defined in \eqref{eq_layer_definition}.
\end{itemize}
Using Lemmas \ref{elimination_intervals} and \ref{kakeya}, one can split the pairs $(k_1,k_2)$ and intervals \eqref{w0_segment} into the following groups.
\begin{enumerate}
	\item The pairs $(k_1,k_2)$ such that none of the layers $\mathcal L$ contain intervals of length $K^{-\nu}$ in the direction $\xi_k$ (that is, there is no layer for which the condition in Lemma \ref{kakeya} holds). Each of those pairs and each of the corresponding $3L$ line segments \eqref{w0_segment} can be avoided by removing a set of frequencies of measure $\le N^C K^2 L K^{-\nu-1}$, which can be made small by choosing a large $\nu$. This settles the claim for the majority of pairs $(k_1,k_2)$, except for at most $N^C K$ pairs.
	\item At most $N^C K$ pairs that satisfy the condition in Lemma \ref{kakeya} for some layer $\mathcal L$ and are non-singular with respect to the same layer (in the sense of Lemma \ref{elimination_intervals}). The latter implies that each pair gives only $\le N^C$ values of $\omega_0$, and hence $\le N^C$ problematic intervals \eqref{w0_segment}.
	\item At most $N^C$ pairs $(k_1,k_2)$ which are singular with respect to some layer.
\end{enumerate}
To deal with (2) and (3), we will use $M$-boxes. By perturbation of the diagonal entries,
$$
|E_j(\omega)-E_j(\omega_0)|\le 4\lambda \frac{N^r}{L}.
$$
(assuming we are taking the eigenvalues in the same order and are considering the same translations of $U$). 
If a box of size $M$ satisfies \eqref{greengood1}, \eqref{greengood2}, then both of these properties are preserved (up to a factor of 10) under a perturbation of energy of size $\le e^{-100 \gamma M}$, see Proposition \ref{prop_resolvent}. Hence, in order to ``freeze'' the dependence on $\omega$ and replace $E_j(\omega)$ by $E_j(\omega_0)$, it would be sufficient to impose
\beq
\label{freeze1}
4\lambda \frac{N^r}{L}\le e^{-100\gamma M},
\eeq
which is satisfied with our choice \eqref{eq_klm} of $K,L,M$. Suppose now that $\omega=\omega_0+\Delta\omega$. The condition \eqref{freeze1} implies that, instead of considering $G_{[-M,M]^2}(\omega,k_1\omega,k_2\omega,E_j(\omega))$, one can consider $G_{[-M,M]^2}(\omega_0,k_1\omega,k_2\omega,E_j(\omega_0))$. This Green's function satisfies the statement of Corollary \ref{green_main}, and hence, the bad set for this function can be avoided by removing $C K^{-1} M^C e^{-M^{\rho}}$ set of $\omega$ from each interval $[\omega_0,\omega_0+1/L)$. For the singular pairs (3), this gives a total of $C N^C M^C e^{-M^{\rho}}$ removed frequencies. For non-singular pairs (2), we have $K N^C M^C K^{-1} e^{-M^{\rho}}$ (there are $N^C K$ pairs remaining, and $N^C$ intervals for each pair). In both cases, the removed set of frequencies has measure bounded by, say, $e^{-\frac23 M^{\rho}}$, which is in line with \eqref{dc_measure}.
\end{proof}
\begin{remark}
\label{fullmeasure_rem1}
Theorem \ref{elimination_main} is used in the next section to establish localization for zero phase. One can formulate an analogue for arbitrary phase, with the same proof. Fix $\theta_1',\theta_2'$ and, similarly to \eqref{bad_e}, define
$$
\mathcal E_N^r(\theta_1',\theta_2')=\{(\omega,\theta_1,\theta_2)\colon \exists j\colon (\theta_1,\theta_2)\in \b_U^{\gamma,b}([-N,N]^2,E_j(\omega,\theta_1',\theta_2')),\,\omega\in \dc(N^r)\},
$$
where $E_j(\omega,\theta_1',\theta_2')$ are eigenvalues of $H_{[-M,M]^2}(\theta_1',\theta_2')$ instead of $H_{[-M,M]^2}(0,0)$. Then the conclusion of Theorem \ref{elimination_main} holds for $\mathcal E_N^r(\theta_1',\theta_2')$, with the same bounds, but $\Omega^{\mathrm{bad}}_N$ will now depend on $(\theta_1',\theta_2')$. For the case of general phases, we will use the notation $\Omega^{\mathrm{bad}}_N(\theta_1',\theta_2')$.
\end{remark}
\begin{cor}
\label{fullmeasure_cor1}
Under the assumptions of Theorem $\ref{elimination_main}$, the set of $(\theta_1',\theta_2',\omega)$ satisfying $\omega\in \Omega^{\mathrm{bad}}_N(\theta_1',\theta_2')$ has measure $\le e^{-\frac14 (\log N)^{3/2}}$. Moreover, if $L\subset[0,1]^2$ is a unit line segment, then the set of $(\omega,\theta_1',\theta_2')$ with $(\theta_1',\theta_2')\in L$ and $\omega\in \Omega^{\mathrm{bad}}_N(\theta_1',\theta_2')$, has 2D measure $\le e^{-\frac14 (\log N)^{3/2}}$ in $[0,1]\times L$.
\end{cor}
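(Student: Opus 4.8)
The plan is to deduce both assertions from Theorem~\ref{elimination_main} (in the form of Remark~\ref{fullmeasure_rem1}) by Fubini's theorem, the only real work being to arrange that the removed frequency sets depend jointly measurably on the phase. First I would revisit the construction of $\Omega^{\mathrm{bad}}_N(\theta_1',\theta_2')$ in the proof of Theorem~\ref{elimination_main} and run it with $(\theta_1',\theta_2')$ kept as free variables throughout. As in Lemma~\ref{semialg_reduction}, after replacing the analytic potential by a polynomial of degree $\le N^C$ with error $e^{-100\gamma N}$, the eigenvalues $E_j(\omega,\theta_1',\theta_2')$ of $H_{[-M,M]^2}(\theta_1',\theta_2')$, the sets $\b_U^{\gamma,b}$ and $\widetilde{\b}_U^{\gamma,b}$, hence the double-resonance set $\mathcal E_N^r(\theta_1',\theta_2')$ and all the auxiliary objects in that proof (the algebraic approximation $\mathcal E_N^{r,\mathrm{alg}}$, the dyadic layers $\mathcal L$, the sets $A^1_{\xi_k}$, $A^2_{\xi_k}$ and their projections onto the $\omega$-axis) become semi-algebraic in $(\omega,\theta_1',\theta_2')$ of degree $\le N^C$. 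The frequency set removed at each step is obtained from these by taking neighborhoods of fixed radii, connected components, and projections, all of which preserve semi-algebraicity with the degree bounds of (sa1)--(sa10). Consequently
$$
\mathcal W_N=\{(\omega,\theta_1',\theta_2')\in[0,1]^3\colon \omega\in\Omega^{\mathrm{bad}}_N(\theta_1',\theta_2')\}
$$
can be chosen to be a semi-algebraic set of degree $\le N^C$, in particular Lebesgue measurable, and the argument of Theorem~\ref{elimination_main} yields the slice bound $|\Omega^{\mathrm{bad}}_N(\theta_1',\theta_2')|_1\le e^{-\frac12(\log N)^{3/2}}$ uniformly in $(\theta_1',\theta_2')\in[0,1]^2$; the uniformity is automatic, since the only $\omega$-dependent input is the slice bound of Corollary~\ref{green_main}, which holds for every fixed phase with the same constants, and the $N^C$-type factors incurred along the way do not depend on the phase.

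Next I would apply Fubini. Integrating the slice bound over $[0,1]^2$,
$$
|\mathcal W_N|_3=\int_{[0,1]^2}|\Omega^{\mathrm{bad}}_N(\theta_1',\theta_2')|_1\,d\theta_1'\,d\theta_2'\le e^{-\frac12(\log N)^{3/2}}\le e^{-\frac14(\log N)^{3/2}},
$$
which is the first assertion. For the second, fix a unit line segment $L\subset[0,1]^2$ and parametrize it by arc length. The intersection of $\mathcal W_N$ with $[0,1]\times L$ is again semi-algebraic, hence measurable, and for each point of $L$ the corresponding $\omega$-slice still has length $\le e^{-\frac12(\log N)^{3/2}}$. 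Integrating over $L$, whose one-dimensional measure equals $1$, gives a $2$D measure $\le e^{-\frac12(\log N)^{3/2}}\le e^{-\frac14(\log N)^{3/2}}$ on $[0,1]\times L$, as claimed.

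The only genuine obstacle is the bookkeeping in the first paragraph: one must verify that the choices made in the proof of Theorem~\ref{elimination_main} — which dyadic layer of $\mathcal E_N^{r,\mathrm{alg}}$ is examined, which connected component of a projection is used, which dyadic $\omega$-interval $[\omega_0,\omega_0+1/L)$ contributes, etc. — can be organized so that the removed set is a single semi-algebraic subset of $(\omega,\theta_1',\theta_2')$-space of degree independent of the phase, rather than a phase-indexed family assembled by ad hoc selections. This is purely a reformulation of the existing construction with the phase treated as a parameter, and it introduces no loss beyond the $N^C$ factors already present, which is why the exponent $\tfrac14$ in the statement (as opposed to $\tfrac12$) leaves ample slack.
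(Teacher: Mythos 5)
Your proposal is correct and follows essentially the same route as the paper: the paper's own proof is a two-sentence invocation of Fubini (plus a parenthetical appeal to Markov's inequality, which is actually unnecessary given the uniform slice bound) together with the remark that measurability follows from the semi-algebraic nature of the construction. What you add is the careful bookkeeping showing that $\mathcal W_N$ is genuinely a semi-algebraic (hence measurable) subset of $(\omega,\theta_1',\theta_2')$-space, which the paper leaves to the reader; this is exactly the content of the paper's "one can check that measurability follows from the construction."
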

\begin{proof}
Both follow from the Fubini theorem and Markov's inequality, as the sets under consideration have small sections by any line $(\theta_1',\theta_2')=\const$. One can check that measurability follows from the construction in Theorem \ref{elimination_intervals}.
\end{proof}
\begin{remark}
\label{fullmeasure_rem2}
A short range interaction $U(n_1-n_2)=f(n_1-n_2)$ has translation invariance in the diagonal direction. That is, it is natural to consider $\theta=\frac12(\theta_1+\theta_2)$, $\eta=\frac12 (\theta_1-\theta_2)$, make $\theta$ the ergodic parameter, and $\eta$ an external parameter. In this case, the conclusion of Corollary \ref{fullmeasure_cor1} holds for any fixed $\eta$ and shows that the set of ``bad'' pairs $(\omega,\theta)$ has small measure in $[0,1]^2$.
\end{remark}
\section{Proof of localization}
Once all ingredients are in place, the proof of localization is fairly standard. Fix some $0<\delta_{\mathrm{dio}}<1/24$ and $\gamma>10$. Let $\varepsilon_{\mathrm{freq}}>0$. Pick $C_{\mathrm{dio}}=C_{\mathrm{dio}}(\varepsilon_{\mathrm{freq}})>0$ such that
$$
|\dc_{C_{\mathrm{dio}},\delta_{\mathrm{dio}}}\cap[0,1]|\ge 1-\varepsilon_{\mathrm{freq}}.
$$
Choose other parameters in a way that \eqref{brho} is satisfied and, based on that choice, choose a large $\lambda_0$ so that the conclusion of Corollary \ref{green_main} is satisfied. Fix some initial phase $(\theta_1,\theta_2)$. Consider the sets $\Omega^{\mathrm{bad}}_N(\theta_1,\theta_2)$ defined in Theorem \ref{elimination_main} and Remark \ref{fullmeasure_rem1}. Assume that
\beq
\label{fullmeasure}
\omega\in \Omega(U,\lambda,\theta_1,\theta_2):=\dc_{C_{\mathrm{dio}},\delta_{\mathrm{dio}}}\setminus \limsup_{N\to \infty}\Omega^{\mathrm{bad}}_N(\theta_1,\theta_2)=\dc_{C_{\mathrm{dio}},\delta_{\mathrm{dio}}}\setminus \bigcap\limits_{N_0\ge 1}\bigcup\limits_{N\ge N_0}\Omega^{\mathrm{bad}}_N (\theta_1,\theta_2).
\eeq
Choose $N$ sufficiently large enough in order to apply Corollary \ref{green_main}, both for the scales $M$ and $N$ from Theorem \ref{elimination_main}.

We will prove Theorems \ref{main_asym} and \ref{main_sym} at the same time, by showing that the operator $H(\theta_1,\theta_2)$ has Anderson localization, as long as Corollary \ref{green_main} is applicable. In the case of Theorem \ref{main_sym}, this would mean extra assumption \eqref{energyrange}.

Suppose that $\psi$ is a generalized eigenvector of $H(\theta_1,\theta_2)$, that is, a formal solution of the eigenvalue equation
\beq
\label{generalized_eif}
H(\theta_1,\theta_2)\psi=E\psi,\quad \psi(0)=1,\quad |\psi(n)|\le C_{\psi}(1+|n|). 
\eeq
Our goal is to show that $\psi\in \ell^2(\Z^2)$. At this point, we are allowed to pick large $N$ depending, in particular, on $\psi$ and $\omega$. We will always assume that $N$ and $M$ are large enough, so that $\Omega\notin \Omega_l^{\mathrm{bad}}(\theta_1,\theta_2)$ for all $l\ge \min(M,N)$.

From Proposition \ref{arith}, there exists a large constant $r_0$ (independent of $N$ for $N$ large) such that, among $n\in[-N^{r_0},N^{r_0}]^2$, there are at most $N^{7r_0/8}$ values of $n$ such that the box $[-N,N]+n$ fails \eqref{greengood1} or \eqref{greengood2} with a factor of 100. Hence, there exists $R\in [N^{r_0/2},N^{r_0}]$ such that the annulus 
\beq
\label{eq_annulus}
A=[-R-N^{r_0/4},R+N^{r_0/4}]^2\setminus [-R+N^{r_0/4},R-N^{r_0/4}]^2
\eeq
consists of good points $(n_1,n_2)$, that is, the Green's function of any box of size $N$ centered in $A$ satisfies \eqref{greengood1}, \eqref{greengood2} with a factor of 100, for all possible translations of $U$.

We will use Poisson's formula. Suppose, $\psi$ satisfies \eqref{generalized_eif}. If $\Lambda$ is an elementary region and $m\in \Lambda\setminus\partial \Lambda$, then
\beq
\label{poisson}
\psi(m)=\sum\limits_{n\in \Lambda,\,n'\in \Z^2\setminus\Lambda,\,|n-n'|=1}G_{\Lambda}(\theta_1,\theta_2,E)(m,n)\psi(n').
\eeq
Suppose $m\in \partial[-R,R]^2$. Apply \eqref{poisson} with $\Lambda=m+[-N,N]$:
\beq
\label{poisson_result1}
|\psi(m)|\le 100 e^{-\gamma N}\sum\limits_{n'\in \partial (\Z^2\setminus\Lambda)}|\psi(n')|,
\eeq
and keep repeating it for each $\psi(n')$. The size of $A$ guarantees that one can apply $\eqref{poisson_result1}$ at least $N^{r_0/5}$ times without leaving the annulus $A$, which brings an estimate
\beq
\label{poisson_result2}
|\psi(m)|\le C_{\psi}(1000 N)^{N^{r_0/5}} e^{-\gamma N \cdot N^{r_0/5}}(1+N^{r_0}).
\eeq
Clearly, the exponential factor dominates all other factors for large $N$ (depending on $\psi$). This implies
$$
|\psi(m)|\le e^{-\frac12 N\cdot N^{r_0/5}},\quad m\in\partial[-R,R]^2,\quad N^{r_0/2}\le R\le N^{r_0},\quad N\ge N_0(C_{\psi}),
$$
and therefore
\beq
\label{eq_dist_spec}
\dist(E,\sigma(H_{[-R,R]^2}(\theta_1,\theta_2)))\le e^{-N^2}.
\eeq
Let $E_0$ be the closest to $E$ eigenvalue of $H_{[-R,R]^2}(\theta_1,\theta_2)$, and suppose $\omega\in\dc(N^2)\setminus\Omega_N(\theta_1,\theta_2)$ (which is valid due to \eqref{fullmeasure}, maybe after increasing $N$). Theorem \ref{elimination_main} implies that, for $c_1 K \le |k|\le c_2 K$, the Green's function $G_{\Lambda}(\theta_1,\theta_2,E_0)$ satisfies \eqref{greengood1} and \eqref{greengood2} with a factor of $100$, for $\Lambda=k+[-N,N]^2$ or $\Lambda=k+[-M,M]^2$. Resolvent identity from Proposition \ref{prop_resolvent} together with \eqref{eq_dist_spec} implies that \eqref{greengood2} will also hold at the energy $E$ with a factor of 200, which can be summarized as follows (at this point, we only care about the off-diagonal decay):
$$
|G_{\Lambda}(\theta_1',\theta_2',E)(n_1,n_2)|< 
200 e^{-\gamma|n_1-n_2|}\,\text{ for all }\,n_1,n_2\in \Lambda,\, |n_1-n_2|\ge \frac14 \sigma(\Lambda),
$$
$$
\Lambda=k+[-N,N]^2\text{ or }\Lambda=k+[-M,M]^2,
$$
\beq
\label{k_range}
c_1 K\le |k|\le c_2 K,\quad K=e^{(\log N)^{2/\rho}},\quad M=[(\log N)^{3/2\rho}],\quad 0<\rho<1.
\eeq
Clearly, the range of $k$ specified in \eqref{k_range} covers the whole $\Z^d$ except for finitely many points, as long as $c_2>c_1>0$. We will choose a smaller annulus by taking 
$$
c_1< c_1+\beta<c_1'<c_2'<c_2-\beta<c_2
$$ and will estimate $\psi(k)$, $c_1'K\le |k|\le c_2'K$, in the same way as we obtained \eqref{poisson_result2}; however, one needs to be more careful about where to stop the expansion, as the box size is now variable. We will repeat applying \eqref{poisson} until the total exponential factor gained from iterations of \eqref{poisson_result1} exceeds $e^{-\gamma \beta K}$ (which guarantees that we are staying within the annulus). Thus, at the end of the expansion, the ``depth'' of each term is determined by the number of $N$-boxes along the history of applying \eqref{poisson} in order to obtain that term. We will provide a coarse bound: for each $0\le l\le \beta K/N$, we will assume that each term had exactly $l$ $N$-boxes along the way, and then take the sum over $l$.

Suppose that $l=0$, so that each time we have to use an $M$-box. Then, the total number of steps is $[\beta K/M]$ (rounding errors can be easily absorbed into the final estimate), which leads to the contribution of these terms into $\psi(k)$ estimated by
\beq
\label{eq_all_mboxes}
(1000M)^{\beta K/M}\exp\{-\gamma \beta K\}C_{\psi}(1+|K|)\le e^{-\frac12 \gamma \beta K},\,\, \text{ assuming }\,\,\frac{\log M}{M}\ll \gamma,
\eeq
which implies, under the assumptions that all boxes are $M$-boxes, that
\beq
\label{psi_exp}
|\psi(k)|\le C_{\psi}\exp\{-\frac12 \gamma\beta c_1'c_1^{-1}|k|\}.
\eeq
Now, suppose there is only one $N$-box along the course of applying \eqref{poisson}, and all other boxes are $M$-boxes. The total contribution from these configurations has the same exponential factor (by construction), has the factor $(1000M)^{N/M}$ replaced by a factor of $1000N$, and adds an extra combinatorial factor $K/M$. Thus, \eqref{eq_all_mboxes} at $l=1$ is bounded by \eqref{eq_all_mboxes} at $l=0$, multiplied by the factor
$$
1000 M^{-1} KN (1000M)^{-N/M}\ll 1.
$$
Each additional $N$-box generates an extra similar (in fact, smaller) factor, which implies that the contribution from the terms with $l>0$ is dominated by the term with $l=0$, and \eqref{psi_exp} is true in all cases.
\begin{remark}
\label{full_measure}
Using Corollary \ref{fullmeasure_cor1}, one can establish Anderson localization for a full measure set of $(\theta_1,\theta_2)$, or for a subset of a line segment $L$ of full 1D measure, by considering the sets of $(\theta_1,\theta_2,\omega)$ constructed in Corollary \ref{fullmeasure_cor1} instead of sets of $\omega$ at fixed $(\theta_1,\theta_2)$. None of the three cases (fixed phase, full measure subset of $[0,1]^2$, full measure subset of $L$) seem to directly imply one another. 

One can also approach this argument from the measure-theoretic point of view. We have essentially shown that, for each $(\theta_1,\theta_2)\in [0,1]^2$, the set of frequencies $\omega$ such that the operator $H(\theta_1,\theta_2)$ has Anderson localization for all possible translations of $U$, has full measure in $\dc_{C_{\mathrm{dio}},\delta_{\mathrm{dio}}}$, and hence the set of $(\omega,\theta_1,\theta_2)$ with this property has full measure in $\dc_{C_{\mathrm{dio}},\delta_{\mathrm{dio}}}\times [0,1]^2$, which allows to apply Fubini's theorem. However, the argument in the previous paragraph is useful in case we do not have an explicit arithmetic condition modulo a zero measure set of frequencies.
\end{remark}
\section{Appendix}

For the convenience of the reader, we include the proof of Proposition \ref{bgs_44}, mostly following \cite{BGS} with appropriate modifications. As stated in \cite{BGS}, the last steps of the proof can be replaced by an application of the following version of Cartan's lemma, which became standard in more recent papers. For the proof, see \cite[Proposition 14.1]{B1}, or \cite[Lemma A.2]{B4}:
\begin{prop}
\label{prop_cartan}
Let $A(\sigma)$ be a self-adjoint $D\times D$ matrix function of a real parameter $\sigma\in [-\delta,\delta]$, satisfying the following conditions:
\begin{enumerate}
	\item $A(\sigma)$ is real analytic in $\sigma$, and admits a holomorphic extension to the strip $(-\delta_1,\delta_1)+i(-\delta_2,\delta_2)$, satisfying in that strip
$$
\|A(z)\|\le B_1.
$$
\item For each $\sigma \in [-\delta_1,\delta_1]$, there is a subset $\Lambda\subset[1,D]$, such that
$$
|\Lambda|<D_0,\quad \|\l(R_{[1,N]\setminus\Lambda}A(\sigma)R_{[1,N]\setminus\Lambda}\r)^{-1}\|<B_2.
$$
\item $|\{\sigma\in [-\delta_1,\delta_1]\colon \|A(\sigma)^{-1}\|>B_3\}<10^{-3}\delta_2 (1+B_1)^{-1}(1+B_2)^{-1}|$.
\end{enumerate}
Then, for any $\varkappa<(1+B_1+B_2)^{-10 D_0}$, we have
\beq
\label{eq_cartan}
|\{\sigma\in (-\delta_1/2,\delta_1/2)\colon \|A(\sigma)^{-1}\|>\varkappa^{-1}\}|<\exp\frac{c\log \varkappa}{D_0\log (D_0+B_1+B_2+B_3)}.
\eeq
\end{prop}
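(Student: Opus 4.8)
The plan is to reduce the estimate to a one–dimensional Cartan–type bound for a \emph{single} analytic function --- namely the determinant of a Schur complement of size at most $D_0$ --- and then to quote Proposition~\ref{harnack}. The one subtlety is that the set $\Lambda$ in hypothesis (2) depends on $\sigma$, so the reduction must be done locally: I would cover $(-\delta_1/2,\delta_1/2)$ by short intervals on each of which $\Lambda$ is frozen, run the scalar estimate there, and sum. Throughout one may assume $B_1,B_2,B_3\ge1$ and $D_0$ larger than an absolute constant, and that the strip is in the usual range $\delta_2\lesssim\delta_1$.

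\textbf{Partition and reference points.} Put $\rho=\delta_2\big(20(1+B_1)(1+B_2)\big)^{-1}$, so that $\rho\le\frac16\min(\delta_1,\delta_2)$, and cover $(-\delta_1/2,\delta_1/2)$ by $P\approx 4e\,\delta_1/\rho$ intervals $I_j$ of length $\rho/(4e)$. Since hypothesis (3) says the set $\{\sigma:\|A(\sigma)^{-1}\|>B_3\}$ has measure strictly less than $|I_j|$, each $I_j$ contains a point $\sigma_j$ in its middle half with $\|A(\sigma_j)^{-1}\|\le B_3$. Let $\Lambda_j=\Lambda(\sigma_j)$, $T_j=[1,D]\setminus\Lambda_j$, so $|\Lambda_j|<D_0$ and $\|(R_{T_j}A(\sigma_j)R_{T_j})^{-1}\|<B_2$. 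On the disk $D_j=\{|z-\sigma_j|<\rho\}$, which lies well inside the strip, the Cauchy estimate gives $\|A'(z)\|\le 2B_1/\delta_2$, hence $\|A(z)-A(\sigma_j)\|\le\frac1{10B_2}$, and the second resolvent identity upgrades the bound to $\|(R_{T_j}A(z)R_{T_j})^{-1}\|<2B_2$ for all $z\in D_j$.

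\textbf{Schur complement and the scalar estimate.} On $D_j$ I would work with the holomorphic matrix function of size $|\Lambda_j|\le D_0$,
\[
S_j(z)=R_{\Lambda_j}A(z)R_{\Lambda_j}-R_{\Lambda_j}A(z)R_{T_j}\big(R_{T_j}A(z)R_{T_j}\big)^{-1}R_{T_j}A(z)R_{\Lambda_j},
\]
and with $g_j(z)=\det S_j(z)$. Then $\|S_j(z)\|\le B':=3B_1^2B_2$, so $|g_j|\le (B')^{D_0}$ on $D_j$. The block–inversion formula gives, for real $\sigma\in I_j$, the comparison $\|S_j(\sigma)^{-1}\|\le\|A(\sigma)^{-1}\|\le 10B_1^2B_2^2\big(1+\|S_j(\sigma)^{-1}\|\big)$ ($A(\sigma)$ invertible iff $S_j(\sigma)$ is). Since $\varkappa<(1+B_1+B_2)^{-10D_0}$, the event $\|A(\sigma)^{-1}\|>\varkappa^{-1}$ forces the smallest singular value of $S_j(\sigma)$ below $20B_1^2B_2^2\varkappa$, hence $|g_j(\sigma)|<\epsilon:=20B_1^2B_2^2\varkappa\,(B')^{D_0}$; while at the reference point $\|S_j(\sigma_j)^{-1}\|\le B_3$, so $|g_j(\sigma_j)|\ge m:=B_3^{-D_0}$. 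Rescaling $g_j$ to $|\zeta|\le 2e$ and normalizing by $g_j(\sigma_j)$, Proposition~\ref{harnack} with $M=(B'B_3)^{D_0}$ and $\lambda=\epsilon/m$ yields, for each $j$,
\[
\big|\{\sigma\in I_j:\,\|A(\sigma)^{-1}\|>\varkappa^{-1}\}\big|_1\le C\rho\,\exp\!\Big(2\,\frac{\log(\epsilon/m)}{D_0\log(B'B_3)}\Big).
\]

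\textbf{Summation and the main obstacle.} Since $\epsilon/m=20B_1^2B_2^2\varkappa\,(B'B_3)^{D_0}$ and $\log(20B_1^2B_2^2\varkappa)\le\frac12\log\varkappa$ (again using $\varkappa<(1+B_1+B_2)^{-10D_0}$), the exponent is $\le 2+\log\varkappa/\big(D_0\log(B'B_3)\big)$. Summing over the $P$ intervals (so $P\rho\lesssim\delta_1$), using $\log(B'B_3)\le C_0\log(D_0+B_1+B_2+B_3)$ and $\delta_1\le1$, the bounded prefactor is absorbed into the exponent at the cost of shrinking the constant, which produces the asserted bound. The genuinely delicate point, and the step I expect to need the most care, is the calibration of the partition: it must be fine enough that $\Lambda$ can be frozen on each $I_j$ by the perturbation argument, yet coarse enough that (3) --- which only controls an exceptional set smaller than a single interval --- still supplies a usable reference point $\sigma_j$ inside \emph{every} $I_j$; this is precisely what makes the $D_0$-dependence (rather than $D$-dependence) in the conclusion work. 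The Schur–complement bookkeeping, the block–inversion comparison, and the absorption of prefactors are routine, and the one–variable Cartan estimate is exactly Proposition~\ref{harnack}.
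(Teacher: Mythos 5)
Your argument follows essentially the same route as the proofs cited by the paper (\cite[Proposition 14.1]{B1}, \cite[Lemma A.2]{B4}): cover the base interval by pieces short enough that the index set $\Lambda$ from hypothesis (2) can be frozen by a Cauchy-estimate-plus-Neumann-series perturbation, pass to the determinant of the $\le D_0\times D_0$ Schur complement (holomorphic on each small disk because the complementary block stays invertible), apply the scalar Cartan-type bound of Proposition~\ref{harnack}, and sum. The one place I would not call the absorption ``routine'' is at the end: for $\varkappa$ just below the threshold $(1+B_1+B_2)^{-10D_0}$ the quantity $-\log\varkappa/(D_0\log(D_0+B_1+B_2+B_3))$ need not be bounded away from $0$ (e.g.\ when $D_0$ or $B_3$ dominates $B_1,B_2$), so the $O(1)$ prefactor from the covering cannot always be hidden merely by shrinking $c$; in that regime one should fall back directly on hypothesis (3), which already bounds the bad set once $\varkappa^{-1}\ge B_3$ --- a constant-tracking detail that the cited proofs must also address, not a flaw in your method.
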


\subsection{Proof of Proposition \ref{bgs_44}} Choose some $M\in [N_0,N_1]$ and define $N=[M_0^{C_1}]$, where $C_1$ will be a large constant. Let $\Lambda_0\in \ER(N)$. Consider the partition
\beq
\label{eq_partition}
\Lambda_0=\cup_{\alpha}\Lambda_{\alpha},\quad \Lambda_{\alpha}=Q_{\alpha}\cap \Lambda_0,
\eeq
where
$$
Q_{\alpha}\in [-M_0,M_0]^2+2M_0\Z^2
$$
is a translation of an $M_0$-cube by an integer vector multiple of $2M_0$, and the union in \eqref{eq_partition} runs over non-empty $\Lambda_{\alpha}$. It is explained in \cite{BGS} that $\Lambda_{\alpha}\in \ER(M')$, for some $M_0\le M'\le 2M_0$, except maybe for at most five values of $\alpha$.

Let
\beq
\mathcal B=\cup_{M_0\le M\le 2M_0}\cup_{\Lambda\in \ER(M),\Lambda\subset [-M,M]^2} \b_U^{\gamma,b}(\Lambda,E).
\eeq
In view of Lemma \ref{semialg_reduction}, one can replace ${\mathcal B}$ by a semi-algebraic set $\mathcal A$,  $\deg \mathcal A \le M_0^{C'_{\mathrm{int}}}$, satisfying the following: for any unit line segment $L\subset \R^2$,
$$
|\mathcal A\cap L|_1\le M_0^{C'_{\mathrm{int}}}e^{-M_0^{\rho}}.
$$
The replacement of $\mathcal B$ by $\mathcal A$ will add a (non-essential for the argument) factor 16 to \eqref{eq_green_cartan1}. Suppose that $\overline {N}_0$ is chosen to be large enough, in order to have, for all $N$ under consideration,
\beq
\label{eq_n_req1}
M_0^{C'_{\mathrm{int}}}e^{-M_0^{\rho}}\le C_{\mathrm{dio}} N^{-1-\delta_{\mathrm{dio}}}.
\eeq
Then, one can apply Theorem \ref{arith} and obtain
$$
\#\{n\in [-N,N]^2\colon \theta+n\omega\in \mathcal A\}\le M_0^{C'_{\mathrm{int}}}C_{\mathrm{dio}}' N^{3/4+3\delta_{\mathrm{dio}}},
$$
where $C'_{\mathrm{int}}$ is a new constant so that the factor $M_0^{C'_{\mathrm{int}}}$ absorbs $(\deg\A)^C$ from Theorem \ref{arith}. Let us call $\Lambda_{\alpha}$ {\it good} if $\theta+n\omega\notin \A$ for all $n\in \Lambda_{\alpha}$, and otherwise call $\Lambda_{\alpha}$ bad. Let
$$
\Lambda_{\ast}=\cup_{\Lambda_{\alpha}\text{ is bad}}\Lambda_{\alpha}.
$$
We have
$$
\#\Lambda_{\ast}\le M_0^{C'_{\mathrm{int}}} C_{\mathrm{dio}}' N^{3/4+3\delta_{\mathrm{dio}}}.
$$
(here $C'_{\mathrm{int}}$ was increased by 2). Now, one can apply the same derivation as in \cite[equation (4.17)]{BGS} and obtain
\beq
\label{eq_green_cartan1}
\|G_{\Lambda_0\setminus\Lambda_{\ast}}(\theta_1,\theta_2,E)\|\le 16\lambda^{-1}M_0^2 e^{(2 M_0)^b},
\eeq
assuming $M_0^2 e^{-\gamma M_0/2}\le 1/2$ (which can also be achieved by fixing a large $\overline{N}_0$).
The bound \eqref{eq_green_cartan1} holds for all $\theta\in [0,1]^2$, with $\Lambda_{\ast}$ depending on $\theta$. The factor $16$ comes from the factor $8$ from Proposition \ref{semialg_reduction}.


In order to satisfy the second assumption of Cartan's lemma, we will need to introduce another scale: $M_1=[(10 M_0)^{1/\rho}]$. Suppose, all boxes $[-M_1,M_1]^2+m$, $m\in \Lambda_0$ are good. Then, Lemma \ref{bgs22} implies (after replacing $M_1^b$ by $M_1$ in the exponent and absorbing the factors):
$$
\|G_{\Lambda_0}(\theta_1,\theta_2,E)\|\le \lambda^{-1} e^{M_1}
$$
assuming that $\theta\in [0,1]^2\setminus\Theta$, where, for any unit line segment $L\subset \R^2$, we have
$$
|\Theta\cap L|_1\le e^{-M_1^{\rho}/2}.
$$
The argument behind applying Lemma \ref{bgs22} is exactly the same as the one that leads to \cite[equation (4.23)]{BGS}. Note that, in out notation, $\Theta$ is a 2D set.

Let $L\subset\R^2$ be a line segment. Introduce new orthonormal coordinates 
$$
\theta(\eta_1,\eta_2)=(\theta_1(\eta_1,\eta_2),\theta_2(\eta_1,\eta_2)).
$$
so that $L$ is parametrized by $(\eta_1,0)$ as $\eta_1\in [0,1]$, and consider Green's functions $G_{\Lambda}(\theta_1(\eta_1,\eta_2),\theta_2(\eta_1,\eta_2),E)$. Denote by $\Xi$ the set of $\eta_1$ such that $(\theta_1(\eta_1,0),\theta_2(\eta_1,0))$ runs over $\Theta\cap L$. Then $|\Xi|\le e^{-M_1^{\rho}/2}$. Without loss of generality, one can increase the length of $L$ by $2$, so that
$$
\|G_{\Lambda_0}(\theta_1(\eta_1,0),\theta_2(\eta_1,0),E)\|\le e^{M_1},\quad \forall \eta_1\in [-1,2]\setminus\Xi,
$$
with the same bound on $|\Xi|$.

Apply Cartan's lemma (Proposition \ref{prop_cartan}) with:
\begin{itemize}
	\item $A(z)=\frac{1}{\lambda}R_{\Lambda_0}(H(\theta_1(z+1/2,0),\theta_2(z+1/2,0))-E)R_{\Lambda_0}$.
	\item $[1,D]$ enumerates the points of $[-N,N]^2$.
	\item $\Lambda=\Lambda_0\setminus\Lambda_{\ast}$.
	\item $D_0=M_0^{C'_{\mathrm{int}}}C_{\mathrm{dio}}'(N^{3/4+3\delta_{\mathrm{dio}}})$.
	\item $[-\delta_1,\delta_1]=[-3/2,3/2]$, $\delta_2=\delta_2(v)$ is the width of the strip to which $v$ can be analytically extended.
	\item $B_1= C(v,m_{\mathrm{int}})$ is the norm bound for $A(z)$ in that strip.
	\item $B_2=2M_0^2 e^{(2 M_0)^b}$.
	\item $B_3= e^{M_1}$.
\end{itemize}
In order for the lemma to produce a meaningful bound, we would need to verify Assumption (3), that is, the estimate of the measure of $\Xi$; we need
$$
e^{-5 M_0}\approx e^{-M_1^{\rho}/2}<\frac{\delta_2(v)}{1000 (1+ C(v,m_{\mathrm{int}}))(1+2M_0^2 e^{(2 M_0)^b})}\approx C_1(v,m_{\mathrm{int}})M_0^{-2}e^{-(2M_0)^b}.
$$
Since $b<1$, this bound is clearly satisfied at a sufficiently large scale (depending only on $v$). In order to obtain the final estimate \eqref{slices}, we take $\varkappa=e^{-N^b}$. The condition preceding \eqref{eq_cartan} becomes
$$
e^{-N^b}<(1+B_1+B_2)^{-10 D_0}=\l\{1+C(v,m_{\mathrm{int}})+2M_0^2 e^{(2 M_0)^b})\r\}^{-M_0^{C'_{\mathrm{int}}}C_{\mathrm{dio}}' N^{3/4+3\delta_{\mathrm{dio}}}}
$$
or, after absorbing $1+C(v,m_{\mathrm{int}})$,
$$
N^b > M_0^{C'_{\mathrm{int}}}C_{\mathrm{dio}}' N^{3/4+3\delta_{\mathrm{dio}}}\l\{(2M_0)^b+\log(2M_0^2)\r\}.
$$
Recall that $N=[M_0^{C_1}]$. This implies the following condition on $C_1$ and $b$:
$$
C_1 b>C'_{\mathrm{int}}+C_1(3/4+3\delta_{\mathrm{dio}})+b+C_{\mathrm{dio}}'',
$$
which can be satisfied as long as $b>3/4+3\delta_{\mathrm{dio}}$ and $\overline{N}_0$ is sufficiently large, depending on $C_{\mathrm{dio}}$.

Finally, we check the conditions under which the right hand side of \eqref{eq_cartan} gives at least as good estimate as the one in the right hand side of \eqref{slices}: that is, when
$$
\frac{-c\log \varkappa}{D_0\log(D_0+B_1+B_2+B_3)}>N^{3\rho},
$$
or
$$
c N^b >N^{3\rho} D_0 \log(D_0+B_1+B_2+B_3)\approx N^{3\rho} D_0 M_1.
$$
Recalling the definition of $M_1$ and absorbing the constants, we arrive to the following sufficient condition (note that $B_3$ dominates $D_0$, $B_1$, $B_2$, assuming again that $\overline{N}_0$ is sufficiently large, depending on $C_{\mathrm{dio}}$):
$$
N^b>D_0 M_0^{1/\rho}N^{3\rho},
$$
which transforms into the following condition on $C_1$:
$$
C_1 b>C'_{\mathrm{int}}+C_1(3/4+3\delta_{\mathrm{dio}}+3\rho)+1/\rho+C''_{\mathrm{dio}}.
$$
The last condition can be satisfied if
$$
b-3/4-3\delta_{\mathrm{dio}}-3\rho>0,
$$
$$
C_1>\frac{C'_{\mathrm{int}}+C''_{\mathrm{dio}}+1/\rho}{b-3/4-3\delta_{\mathrm{dio}}-3\rho}.
$$
and then choosing $\overline{N}_0$ in order to satisfy earlier assumptions on a sufficiently large scale.\,\,\qed
\begin{remark}
The only place where a bound on $m_{\mathrm{int}}$ is used, is the Assumption (1) of Cartan's lemma.
\end{remark}
\subsection{Proofs of some semi-algebraic facts} We start from the following quantitative triangulation theorem due to Yomdin and Gromov formulated in \cite{Gr}, see also \cite{Y,Burget,Pila}. Let $\Delta_k$ be the standard $k$-dimensional simplex.
\begin{prop}
\label{prop_triangulation}
Fix $r\in \Z_+$. Any closed semi-algebraic set $Y\subset[0,1]^d$ can be triangulated into $(\deg Y+1)^{C(d,r)}$ simplices, where for every closed triangulating $k$-simplex $\Delta\subset Y$ there exists an algebraic homeomorphism 
$$
h_{\Delta}\colon \Delta_k\to \Delta,\quad \deg h_{\Delta}\le (\deg Y+1)^{C(n,r)},
$$
such that $\l.h_{\Delta}\r|_{\mathrm{int}\, \Delta_k}$ is real analytic with non-vanishing differential, and $\|D_r h_{\Delta}\|\le 1$.
\end{prop}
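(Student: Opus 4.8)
The plan is to deduce Proposition~\ref{prop_triangulation} from two ingredients: a \emph{quantitative semialgebraic triangulation} of $Y$ (a decomposition into $(\deg Y+1)^{C(d)}$ simplices, each realized as an algebraic homeomorphic image of $\Delta_k$ of degree $(\deg Y+1)^{C(d)}$), and the \emph{Yomdin--Gromov $C^r$-reparametrization lemma}, which upgrades any such algebraic chart to one with $\|D_r h_\Delta\|\le 1$ at the cost of subdividing into $(\deg Y+1)^{C(d,r)}$ further pieces. The first ingredient is classical: cylindrical algebraic decomposition produces a cell decomposition of $[0,1]^d$ adapted to $Y$, with the number of cells and the degrees of their defining polynomials bounded by $(\deg Y)^{C(d)}$, each cell being the band between two semialgebraic sections over a lower-dimensional cell; one triangulates each cell in its natural coordinates and uses (sa9)-type algebraic path-connectivity to exhibit each simplex as an algebraic image of $\Delta_k$ of controlled degree. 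So the real content is the second ingredient, the $C^r$-parametrization with power-type degree bounds.

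For the parametrization lemma I would argue by induction on $d$, the base case $d=1$ being the heart of the matter. Given a semialgebraic map $f\colon (0,1)\to[0,1]^m$ of degree $B$, the number of subintervals on which each coordinate $f_i$ is monotone and on which each derivative $\partial_t^j f_i$, $j\le r$, has a fixed sign is $B^{C(r)}$, since by (sa7) the zero set of each semialgebraic function $\partial_t^j f_i$ has $B^{C}$ components. On each such interval one reparametrizes by the standard Yomdin trick: if $f'$ blows up or vanishes to some order at an endpoint, precomposition with a map $t\mapsto a+(b-a)t^{N}$, $N\le r$, flattens the growth of the higher derivatives, and a finite iteration of such moves brings $\|f'\|,\dots,\|f^{(r)}\|$ all below $1$, while keeping the number of pieces and all degrees $B^{C(r)}$.

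For the inductive step, write $Y\subset[0,1]^d$ as a union of $(\deg Y)^{C}$ cells of a CAD over the first $d-1$ coordinates; by induction parametrize the base $Y'\subset[0,1]^{d-1}$ by charts $g_\alpha\colon\Delta_{d-1}\to Y'$ with $\|D_r g_\alpha\|\le 1$, and over each chart $Y$ becomes a semialgebraic family of at most $B^{C}$ graphs and bands in the last variable, to which the one-dimensional case applies fiberwise; composing $g_\alpha$ with the fiberwise reparametrization (after one more subdivision of $\Delta_{d-1}$ so that the fiber data is itself algebraic of controlled degree) produces charts for $Y$. The step I expect to be the main obstacle is obtaining $\|D_r\|\le 1$ for the \emph{composed} map \emph{uniformly in the base point}: differentiating a fiber reparametrization that depends on $\theta'\in Y'$ generates $\theta'$-derivatives of the flattening exponents and breakpoints, which are only semialgebraic --- not smooth --- in $\theta'$. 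This is precisely the delicate point (the gap in Gromov's original sketch); the remedy is to parametrize the base not to order $r$ but to a higher auxiliary order $r'=r'(r,d)$, and to arrange the fiber reparametrization so that its dependence on the base is again absorbed by the one-dimensional lemma, propagating the bound through a doubly-indexed induction (as in the later treatments of Burguet, and with sharper degree control in Pila--Wilkie-- and Binyamini--Novikov--type arguments). Keeping every passage in this nested induction polynomial in $\deg Y$, so that the final count is $(\deg Y+1)^{C(d,r)}$, is exactly the bookkeeping that makes the statement quantitative; the smoothness and non-vanishing-differential claims on $\mathrm{int}\,\Delta_k$ then come for free from the construction, since all moves are composites of algebraic diffeomorphisms on cell interiors.
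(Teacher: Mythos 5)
The paper does not prove this proposition. Immediately before stating it, the authors write that it is ``the following quantitative triangulation theorem due to Yomdin and Gromov formulated in \cite{Gr}, see also \cite{Y,Burget,Pila}'' --- so the intended ``proof'' is a citation to Gromov's Bourbaki expos\'e and the subsequent literature (Yomdin's $C^k$-resolution theorem, Burguet's rigorous proof of the algebraic lemma, Pila--Wilkie). There is therefore no in-paper argument to compare against; you have undertaken to reprove a deep external theorem.

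Your sketch does capture the standard architecture of that proof: a complexity-controlled cylindrical algebraic decomposition to reduce to cells, a one-dimensional Yomdin-type flattening lemma (monotonicity pieces plus power-map reparametrization), and a dimension induction in which the base is parametrized to a higher auxiliary $C^{r'}$ order so that the fiberwise reparametrization can be composed without spoiling the derivative bounds. You correctly identify the crux --- uniformity of the composed $C^r$ norm in the base point, the well-known lacuna in Gromov's original exposition --- and you point to the right repair (Burguet's doubly-indexed induction). In that sense the outline is sound.

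Two cautions. First, your first ingredient invokes ``(sa9)-type algebraic path-connectivity'' to realize the simplices as algebraic images of $\Delta_k$. In this paper (sa9) is \emph{proved from} Proposition~\ref{prop_triangulation}, so as written your reduction risks circularity; the cell-to-simplex step should instead be done directly from the band structure of each CAD cell (a CAD cell of dimension $k$ carries an explicit algebraic homeomorphism with $(0,1)^k$ of degree bounded by a power of the cell complexity, and a linear triangulation of $(0,1)^k$ pushes forward), without appealing to (sa9). Second, the quantitative content --- that the number of pieces and all degrees stay polynomial in $\deg Y$ across the entire nested induction, rather than tower-exponential --- is exactly where the hard bookkeeping lives; your sketch acknowledges this but does not carry it out, and doing so amounts to reproving the Yomdin--Gromov algebraic lemma in full. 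For the purposes of this paper the right move is the one the authors make: cite \cite{Gr,Y,Burget,Pila} rather than attempt the proof inline.
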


{\noindent \bf Proof of (sa5)}. Apply Proposition \ref{prop_triangulation} with $r= 2$. Since the interior point of any $d$-dimensional simplex is also an interior point of $S$, the boundary  $\partial S$ is contained in the union of all triangulating simplices of dimensions $\le d-1$. The bound on $\|D_r h_{\Delta}\|$ will imply the bound on the area of $\partial S$.
\vskip 2mm

{\noindent \bf Proof of (sa6)}. Suppose $|S|\ge \varepsilon_1$ and $S$ does not contain a ball of radius $\delta$. Then $S\subset \partial S+B_{\delta}$. Applying (sa4) with $\varepsilon=\delta$, we obtain an upper bound $|S|\le (\deg S)^{C(d)}\delta$. The contradiction obtained at $\delta<(\deg S)^{C(d)}\varepsilon_1$, proves the first claim.

To prove the second claim, apply Proposition \ref{prop_triangulation} to $\mathcal C$ and obtain a piece $\mathcal C_1\subset  \mathcal C$ with the following properties: $\mathcal C_1$ is a homeomorphic image of $[0,1]$ under an algebraic map $h$, which is real analytic with non-zero derivative on $(0,1)$. Moreover, the length of $\mathcal C_1$ satisfies $|\mathcal C_1|\ge B^{-C}L$. With a loss of extra $B^C$ (absorbed into the previous estimate), we can also assume that the direction of tangent vector to $\mathcal C_1$ does not change more than by, say, $\pi/100$. Then, one can algebraically re-parametrize $\mathcal C_1$ by the distance from the initial point $h(0)$ and obtain an interval $I_1$ of length $B^{-C} L$ and an algebraic function $h_1\colon I_1\to \mathcal C_1$ that parametrizes $\mathcal C_1$ with $\frac12 \le |h_1'|\le 2$. With these preparations, one can apply the first claim to $I_1\times I$: the splitting of $\mathcal C\times I$ into $K$ semialgebraic pieces induces a semialgebraic splitting of $I_1\times I$ into $K$ pieces. At least one of the pieces has area $\ge B^{-C} L^2 K^{-1}$, and hence contains a ball $B$ of radius $B^{-C} L^2 K^{-1}$. By picking a smaller square inside of that ball, one can find intervals $I_2,J$ such that $I_2\times J\subset B$ and $|I_2|,J|\ge B^{-C} L^2 K^{-1}$. To conclude, one can pick $\mathcal C'=h_1(I_2)$.
\vskip 2mm

{\noindent \bf Proof of (sa8)}. The part regarding algebraic local charts is standard, using the fact that the image of a semialgebraic set under an algebraic map is semialgebraic, with the control on the degrees.

To explain the proof for the provided example, consider the set 
$$
\mathcal B=\{(x,y)\in \R^{d+1}\times \R^{d+1}\colon |x-y|\ge \eta,\,\,[x,y]\subset \A\},
$$
where $[x,y]$ denotes the line segment connecting $x$ and $y$. Clearly, $\mathcal B$ is semialgebraic of degree $\le (\deg\A)^{C(d)}$. Define a function
$$
f\colon \mathcal B\to \mathbb S^d,\quad f(x,y)=\frac{y-x}{|y-x|}.
$$
Then $f$ is algebraic, and $\Xi=f(\mathcal B)$ is a semi-algebraic subset of $\mathbb S^d$ of degree $\le (\deg\A)^{C(d)}$.
\vskip 2mm

{\noindent \bf Proof of (sa9)}. Since $S$ is connected and admits a triangulation, it is also path connected. Let $p,q\in S$, and $f\colon [0,1]\to S$ be a continuous path: $f(0)=p$, $f(1)=q$. Apply Proposition \ref{prop_triangulation} to $S$. For each triangulating simplex $\Delta$, let $[a,b]$ be the smallest interval containing $f^{-1}(\Delta)$. Then, replace the part of the path $f$ between $a,b$ by $h_{\Delta}(L)$, where $L$ is the line segment in $\Delta$ connecting $h_{\Delta}^{-1}(f(a))$ and $h_{\Delta}^{-1}(f(b))$. By applying this operation (at most once for each simplex), one gets a path connecting $p$ and $q$, that visits each simplex only once, and is a straight line segment in the local coordinates on each simplex. Clearly, this new path satisfies all stated properties. See also \cite[Section 5.2]{ARAG}.

\section{Acknowledgements} We would like to thank Svetlana Jitomirskaya for drawing our attention to the problem, and also thank her, Thomas Spencer, and Alexander Volberg for valuable discussions. We are also very grateful to the anonymous referee, whose kind suggestions led to a major improvement in the quality of the text. 

The first author was supported by NSF DMS--1800640 grant ``New Decouplings and Applications''. The second author was supported by the NSF DMS--1600422/DMS--1758326 grant ``Spectral Theory of Periodic and Quasiperiodic Quantum Systems'', and earlier partially supported by NSF DMS--1401204. A significant portion of this work was completed during the second author's membership at Institute for Advanced Study in 2016/2017, and he would like to thank IAS for their hospitality. This material is also based upon work supported by the National Science Foundation under agreement No. DMS--1128155.  Any opinions, findings and conclusions or recommendations expressed in this material are those of the author(s) and do not necessarily reflect the views of the National Science Foundation.

\end{document}